\documentclass[12pt]{article}

\usepackage{amsmath,amsthm,amsfonts,amssymb, color,xcolor,subcaption,graphicx} %,leftindex}

\usepackage{todonotes, dsfont}

\newtheorem{theorem}{Theorem}[section]

\newtheorem{proposition}[theorem]{Proposition}
\newtheorem{lemma}[theorem]{Lemma}
\newtheorem{definition}[theorem]{Definition}
\newtheorem{remark}[theorem]{Remark}

\def\cB{\mathcal{B}}
\def\cC{\mathcal{C}}
\def\cD{\mathcal{D}}

\def\cF{\mathcal{F}}
\def\cG{\mathcal{G}}
\def\cH{\mathcal{H}}

\def\cM{\mathcal{M}}
\def\cN{\mathcal{N}}
\def\cP{\mathcal{P}}

\def\cS{\mathcal{S}}

\def\cZ{\mathcal{Z}}

\def\bE{\mathbb{E}}

\def\bP{\mathbb{P}}
\def\bR{\mathbb{R}}

\def\bZ{\mathbb{Z}}

\newcommand{\les}{\lesssim}

\def\e{\varepsilon}
\def\k{\kappa}

\newcommand{\fm}{\mathfrak{m}}

\begin{document}

%% BEGIN Intro
\title{Gaussian fluctuations for hyperbolic Anderson model \\
with L\'evy colored noise}

\author{Raluca M. Balan\footnote{Corresponding author. University of Ottawa, Department of Mathematics and Statistics, 150 Louis Pasteur Private, Ottawa, Ontario, K1N 6N5, Canada. E-mail address: rbalan@uottawa.ca.} \footnote{Research supported by a grant from the Natural Sciences and Engineering Research Council of Canada.}
\and
William D. Stephenson \footnote{University of Ottawa, Department of Mathematics and Statistics, 150 Louis Pasteur Private, Ottawa, Ontario, K1N 6N5, Canada. E-mail address: wstep051@uottawa.ca.}
}

\date{February 25, 2026}
\maketitle

\begin{abstract}
\noindent In this article, we study the asymptotic behaviour of the spatial integral $F_R(t)$ of the solution to the hyperbolic Anderson model in dimension $d=1$, driven by the L\'evy colored noise introduced in \cite{BJ25}. We assume that the spatial coloration kernel of the noise is either integrable on $\bR$, or is the Riesz kernel of order $\alpha \in (0,1)$, and the L\'evy measure of the noise has finite moments of order $p$ and $2p$ for some $p \in (1,2]$. By applying a recent result of \cite{trauthwein25}, we prove that $F_R(t)/\sqrt{{\rm Var}\big(F_R(t)\big)}$ converges to the standard normal distribution as $R \to \infty$, and we give an estimate for the rate of this convergence in the Fortet-Mourier distance, the 1-Wasserstein distance, or the Kolmogorov distance. We also provide the corresponding functional limit result.
\end{abstract}

\noindent {\em MSC 2020:} Primary 60H15; Secondary 60G60, 60G51
%60H15=SPDEs
%60G60=random fields
%60G51: Processes with independent increments; L\'evy processes

\vspace{1mm}

\noindent {\em Keywords:} stochastic partial differential equations, random fields, Malliavin calculus, Gaussian approximations, Poisson random measure, L\'evy white noise

\pagebreak

\tableofcontents

\section{Introduction}

The theory of stochastic partial differential equations (SPDEs) has grown consistently in the last three decades, and now constitutes a central area of stochastic analysis. SPDEs model the evolution of random systems across multiple scales, ranging from microscopic particle dynamics to macroscopic continuum limits, and arise in diverse contexts, such as interface growth, disordered media, turbulence, and transport in random environments. They provide a rigorous mathematical framework for universality phenomena observed in physics. Motivated by these physical models and mathematical challenges, systematic theories for SPDEs began to emerge in the late 1980s and 1990s. Prominent developments include Walsh's \cite{walsh86} probabilistic approach based on martingale measures and random fields, and the semigroup approach introduced by Da Prato and Zabczyk \cite{DZ92}. Basic equations which have been studied are the stochastic heat equation and the stochastic wave equation, and their particular cases, the parabolic Anderson model (pAm), respectively the hyperbolic Anderson model (hAm). However, comprehensive solution theories for numerous other SPDEs
remained beyond the grasp of mathematical analysis, until the early 21st century. These challenges
stemmed from the singular nature of such equations, a singularity intricately linked to the irregularities inherent in the random data involved. A significant breakthrough arrived in 2014 when Hairer \cite{hairer14}
constructed a well-posedness framework for the Kardar-Parisi-Zhang (KPZ) equation, using concepts from Lyons' rough paths theory \cite{lyons98}. In dimension 1, the solution of the KPZ equation is directly related to the solution of (pAm
) via the Cole-Hopf transformation.

\medskip

Traditionally, SPDEs are perturbed by a space-time Gaussian white noise in dimension $d=1$, which is a zero-mean Gaussian process $\{B_t(\varphi);t> 0,\varphi \in L^2(\bR)\}$ with covariance:
\[
\bE[B_t(\varphi) B_s(\psi)]=(t \wedge s) \langle \varphi,\psi \rangle_{L^2(\bR)}.
\]
In higher dimensions, Dalang introduced in \cite{dalang99}, a spatially-homogeneous (or colored) noise, as a zero-mean Gaussian process $\{W_t(\varphi);t\geq 0,\varphi \in \cS(\bR)\}$ with covariance:
\begin{equation}
\label{def-cov-W}
\bE[W_t(\varphi) W_s(\psi)]=(t \wedge s) \int_{\bR^d} \int_{\bR^d}\varphi(x)\psi(y) f(x-y)dxdy,
\end{equation}
where $\cS(\bR^d)$ is the set of all rapidly decreasing functions on $\bR^d$, and $f:\bR^d \to [0,\infty]$ is the Fourier transform of a tempered measure $\mu$. This noise can also be defined as:
\begin{equation}
\label{Gauss-color}
W_t(\varphi)=B_t(\varphi*\k) \quad \mbox{for all $t>0$ and $\varphi \in \cS(\bR^d)$},
\end{equation}
where the kernel $\k$ is chosen such that $k * \widetilde{k}=f$, and $\widetilde{k}(x):=k(-x)$ for all $x \in \bR^d$.

\medskip

One of the problems which has received a lot of interest in the literature in the recent years is the study of the asymptotic behaviour as $R \to \infty$ of the spatial integral 
\[
F_R(t)=\int_{|x|<R} \big(u(t,x)-\bE[u(t,x)]\big)dx
\]
of the random-field solution $u$ of an SPDE. The major result is the {\em Quantitative Central Limit Theorem} (QCLT), which shows that $F_R(t)/\sqrt{{\rm Var}\big(F_R(t)\big)}$ converges in distribution as $R \to \infty$ to $Z \sim N(0,1)$, and gives an estimate for the rate of this convergence in the total variation distance. This was achieved for the first time in the seminal article \cite{HNV20} for the stochastic heat equation driven by space-time Gaussian white noise, by combining tools from Malliavin calculus with Stein's method for normal approximations. Since then, this method was extended to various other models, each with its own challenges. We refer the reader to \cite{BNZ,CKNP-delta,CKNP22,DNZ20,HNVZ,NSZ20,NXZ22,NZ20-1,NZ20-2,NZ22} for a sample of relevant references.

\medskip

In the recent article \cite{BZ24}, the QCLT problem was considered for the first time for an SPDE, namely (hAm),
driven by a L\'evy white noise, by using Poisson-Malliavin calculus tools (in this case, with respect to the underlying Poisson random measure of the noise), which have been developed for instance in \cite{Last16,LPS16}.  On the other hand, the recent article \cite{BJ25} has introduced a coloration procedure similar to \eqref{Gauss-color} for the L\'evy noise, and analyzed various SPDEs with this noise. Therefore, a natural question is whether the QCLT continues to hold for an SPDE driven by the L\'evy colored noise of \cite{BJ25}. We propose to address this question in the present article, by focusing on (hAm) in dimension $d=1$, as in \cite{BZ24}.

\medskip

More precisely, we consider the (hAm): 
\begin{align}
\label{HAM}
	\begin{cases}
		\dfrac{\partial^2 u}{\partial^2 t} (t,x)
		=  \dfrac{\partial^2 u}{\partial x^2} (t,x)+u(t,x) \dot{X}(t,x), \
		t>0, \ x \in \bR, \\
		u(0,x) = 1, \dfrac{\partial u}{\partial t} (0,x)=0, \quad x \in \bR,
	\end{cases}
\end{align}
where $X=\{X_t(\varphi);t>0,\varphi \in \cS(\bR)\}$ is the L\'evy colored noise, given by:
\[
X_t(\varphi)=L_t(\varphi *\k).
\]
with $L_t(\varphi)=L(1_{[0,t]} \varphi)$ for all $\varphi \in L^2(\bR)$, and $L=\{L(\varphi);\varphi \in L^2(\bR_{+} \times \bR)\}$ being the {\em L\'evy white noise}:
\[
L(\varphi)=\int_{\bR_{+} \times \bR \times \bR_0} \varphi(t,x)z \widehat{N}(dt,dx,dz) \quad \mbox{for all} \quad \varphi \in L^2(\bR_{+} \times \bR).
\]

Here $N$ is a Poisson random measure on $Z=\bR_{+}\times \bR \times \bR_0$ with intensity 
\[
\fm(dt,dx,dz)=dt dx \nu(dz),
\] 
defined on a complete probability space $(\Omega,\cF,\bP)$, and  $\widehat{N}(A)=N(A)-\fm(A)$ is the compensated version of $N$.
The space $\bR_0=\bR \verb2\2 \{0\}$ is equipped with the distance $d(x,y)=|x^{-1}-y^{-1}|$, so that the bounded subsets of $\bR_0$ are those that are bounded away from 0.
We assume that the measure $\nu$ satisfies the following condition:
\[
m_2:=\int_{\bR_0}|z|^2 \nu(dz)<\infty.
\]
Under this condition, the following isometry property holds: for any $\varphi,\psi \in L^2(\bR_{+}\times \bR)$,
\[
\bE[L(\varphi)L(\psi)]=m_2 \int_{\bR_+ \times \bR} \varphi(t,x)\psi(t,x)dtdx.
\]

We denote by $\cS'(\bR)$ the set of tempered distributions on $\bR$.
As in \cite{BJ25}, we assume that the kernel $\k$ satisfies the following assumption:

\medskip

\noindent {\bf Assumption A.}
$\k:\bR \to [0,\infty]$ is a continuous, symmetric, and tempered function such that:\\
(a) $\cF\k=h$ in $\cS'(\bR)$ is a tempered non-negative function, and $h^2$ is tempered;\\
(b) $f=\k* \widetilde{\k}$ is a continuous, symmetric, and tempered function ($\cF f=h^2$ in $\cS'(\bR))$;\\
(c)  $\k(x)<\infty$ for all $x\not=0$, and $f(x)<\infty$ for all $x\not=0$.

\medskip

We will impose Assumption A throughout this article.
Under this assumption, the noise $X$ is well-defined, since $\varphi*\k \in \L^2(\bR)$ for any $\varphi \in \cS(\bR)$. Moreover, for any $\varphi,\psi \in \cS(\bR)$,
\[
\int_{\bR} \int_{\bR}\varphi(x)\psi(y) f(x-y)dxdy=\int_{\bR} \cF \varphi(\xi) \overline{\cF \psi(\xi)} \mu(d\xi),
\]
where $\cF \varphi(\xi)=\int_{\bR}e^{-i \xi x} \varphi(x) dx$ is the Fourier transform of $\varphi$, and
\begin{equation}
\label{def-mu}
\mu(d\xi)=\frac{1}{2\pi} |\cF \k(\xi)|^2 d\xi.
\end{equation}

Since $d=1$, $\mu$ satisfies {\em Dalang's condition}: (see Remark 10.(b) of \cite{dalang99})
\[
\int_{\bR}\frac{1}{1+|\xi|^2}\mu(d\xi)<\infty.
\]

\medskip

The stochastic integral with respect to $X$ is defined using Walsh's theory \cite{walsh86}, 
since $X$ induces a martingale measure. This integral shares many properties with the integral defined in \cite{dalang99} for the Gaussian colored noise, except that its moments are estimated in a different way.

Let $G$ be the fundamental solution of the wave equation in dimension $d=1$, given by:
\begin{equation}
\label{def-G}
G_t(x)=\frac{1}{2}1_{\{|x|<t\}} \quad \mbox{for all $t>0$ and $x \in \bR$.}
\end{equation}

Inspired by D'Alembert formula, we introduce the following definition. 

\begin{definition}
{\rm A predictable process $\{u(t,x);t\geq 0,x\in \bR\}$ is called a {\em (mild) solution} of \eqref{HAM}
if it satisfies the integral equation:
\begin{equation}
\label{def-sol}
u(t,x)=1+\int_0^t \int_{\bR}G_{t-s}(x-y)u(s,y)X(ds,dy).
\end{equation}
}
\end{definition}

We recall that a random field $\{\Phi(t,x);t\geq 0,x\in \bR\}$ is {\em predictable} if it is measurable with respect to the predictable $\sigma$-field on $\Omega \times \bR_{+} \times \bR$, which is the minimal $\sigma$-field with respect to which all elementary processes are measurable. An {\em elementary process} is a linear combination of processes of the form
\[
\Phi(t,x)=Y 1_{(a,b]}(t) 1_{A}(x)
\]
where  $0\leq a<b$, $A \in \cB_b(\bR)$ and $Y$ is $\cF_a$-measurable. Predictable processes 
$\{\Phi(t,x,z);t\geq 0,x\in \bR,z\in \bR_0\}$ are defined similarly. Here $(\cF_t)_{t\geq 0}$ is the filtration induced by $N$, i.e.
\[
\cF_t=\sigma(\{N([0,s] \times A \times B); s \in [0,t],A\in \cB_b(\bR), B \in \cB_b(\bR_0)\}) \vee \cN,
\]
where $\cN$ is the class of $\bP$-null sets, and $\cB(\bR)$, $\cB_b(\bR_0)$ are the classes of bounded Borel subsets of $\bR$, respectively $\bR_0$.

\medskip

By Theorem 5.2 of \cite{BJ25}, equation \eqref{HAM} has a unique solution $u$, which satisfies:
\[
\sup_{(t,x)\in [0,T] \times \bR}\bE|u(t,x)|^2<\infty \quad \mbox{for all} \quad T>0.
\]

By Lemma 5.5 of \cite{BJ25}, the process $\{u(t,x)\}_{x\in \bR}$ is strictly stationary, for any $t>0$.
Moreover, by Theorem 5.6 of \cite{BJ25}, if for some $p\geq 2$, we have
\begin{equation}
\label{def-mp}
m_p:=\int_{\bR_0}|z|^p \nu(dz)<\infty,
\end{equation}
and
\begin{equation}
\label{def-Mp}
\cM_p(t):=\int_0^t \int_{\bR} \big|(G_{t-s}*\k)(x)\big|^p dx ds<\infty \quad \mbox{for all} \quad t>0,
\end{equation}
then
\begin{equation}
\label{mom-u}
K_p(T):=\sup_{(t,x)\in [0,T] \times \bR}\|u(t,x)\|_p<\infty \quad \mbox{for all} \quad T>0,
\end{equation}
where $\|\cdot\|_p$ denotes the norm in $L^p(\Omega)$. The goal of the present paper is to complement this analysis by showing that the {\em spatial average} of the (centered) solution:
\[
F_R(t)=\int_{-R}^{R}\big(u(t,x)-1 \big)dx.
\]
has asymptotic Gaussian fluctuations when $R \to \infty$, with a precise rate of convergence which depends on the noise.  We denote $\sigma_R^2(t)={\rm Var}(F_R(t))$.

\medskip

\medskip

Recall that the {\em Riesz kernel of order $\alpha \in (0,1)$} is given by:
\[
R_{1,\alpha}(x)=C_{1,\alpha}|x|^{-(1-\alpha)} \quad \mbox{where} \quad C_{1,\alpha}=\pi^{-1/2}2^{-\alpha}\frac{\Gamma(\frac{1-\alpha}{2})}{\Gamma(\frac{\alpha}{2})}.
\]
Moreover, $\cF R_{1,\alpha}(\xi)=|\xi|^{-\alpha}d\xi$, and $R_{1,\alpha}*R_{1,\beta}=R_{1,\alpha+\beta}$ for any $\alpha,\beta>0$ with $\alpha+\beta<1$.

\medskip

We introduce the following assumption.

\medskip

\noindent {\bf Assumption B.} 
(i) $\k\in L^1(\bR)$; or (ii) $\k=R_{1,\alpha/2}$ for some $\alpha \in (0,1)$.

%Examples of kernels $\k \in L^1(\bR)$ are the heat kernel or the Bessel kernel.

\begin{remark}
\label{rem-M}
{\rm Suppose that Assumption B holds. Applying Young's inequality if $\k \in L^1(\bR)$, and Theorem \ref{HLS} if $\k=R_{1,\alpha/2}$, we see that:
\[
\|G_t* \k\|_{L^p(\bR)} \leq
\left\{
\begin{array}{ll} 
\|\k\|_{L^1(\bR)} \|G_t\|_{L^p(\bR)} & \mbox{if $\k \in L^1(\bR)$ and $p\geq 1$} \\
A_{1,\alpha,p}\|G_t\|_{L^q(\bR)} & \mbox{if $\k=R_{1,\alpha/2}$, $\alpha \in (0,1)$, $p>\frac{2}{2-\alpha}$ and $\frac{1}{q}=\frac{1}{p}+\frac{\alpha}{2}$}
\end{array} \right.
\]
Since $\|G_t\|_{L^p(\bR)}^p=2^{1-p}t$ for any $p>0$, we infer that condition \eqref{def-Mp} holds for any $p\geq 2$. Therefore, \eqref{mom-u} holds for any $p \geq 2$ such that $m_p<\infty$.
}
\end{remark}

We recall the following definitions. Let $X$ and $Y$ be random variables defined on the same probability space.
The {\em $1$-Wasserstein distance} between $X$ and $Y$ is:
\[
d_{W}(X,Y)=\sup_{{\rm Lip}(h) \leq 1} \big|\bE[h(X)]-\bE[h(Y)]\big|, \quad
\mbox{where} \quad
{\rm Lip}(h):=\sup_{x \not=y}\frac{|h(x)-h(y)|}{|x-y|}.
\]
The {\em Fortet-Mourier distance} between $X$ and $Y$ is:
\[
d_{FM}(X,Y)=\sup_{\|h\|_{\infty}+{\rm Lip}(h) \leq 1} \big|\bE[h(X)]-\bE[h(Y)]\big|,
\]
The {\em Kolmogorov distance} between $X$ and $Y$ is:
\[
d_{K}(X,Y)=\sup_{x \in \bE} \big|P(X \leq x)-P(Y \leq x)\big|.
\]

We are now ready to state the main results of this article.

\begin{theorem}
\label{ergodic-th}
If Assumption B holds, then $\{u(t,x)\}_{x\in \bR}$ is ergodic, for any $t>0$. Consequently, by the mean ergodic theorem,
\[
\frac{1}{R}F_R(t) \to 0 \quad \mbox{a.s and in $L^2(\Omega)$}, \quad \mbox{as $R \to \infty$}.
\]
\end{theorem}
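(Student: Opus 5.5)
To prove ergodicity of the strictly stationary process $\{u(t,x)\}_{x\in\bR}$ (stationarity being Lemma 5.5 of \cite{BJ25}), we use the standard Poincaré-inequality criterion for functionals of a Poisson random measure, as in \cite{BZ24} for the white-noise case. By a classical reduction, it suffices to show that for any $m\geq 1$, any $x_1,\dots,x_m\in\bR$ and any bounded Lipschitz $g:\bR^m\to\bR$ with ${\rm Lip}(g)\le 1$,
\[
\frac{1}{R}\int_{-R}^{R}{\rm Cov}\Big(g\big(u(t,x_1+y),\dots,u(t,x_m+y)\big),\,g\big(u(t,x_1),\dots,u(t,x_m)\big)\Big)dy\to 0 .
\]
By density of functions of this form in $L^2$ of the $\sigma$-field generated by the process, this implies the decay of covariances required by von Neumann's mean ergodic theorem. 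For $F,G\in L^2(\Omega)$ functionals of $N$, we then apply the Poisson covariance inequality (a consequence of the Clark--Ocone/Poincaré formula, \cite{Last16,LPS16}):
\[
|{\rm Cov}(F,G)|\le\int_Z \|D_\xi F\|_2\,\|D_\xi G\|_2\,\fm(d\xi),
\]
where $D_{(r,z,w)}$ is the add-one-cost operator. For Lipschitz $g$ we have $|D_\xi g(u(t,x_1+y),\dots)|\le\sum_j|D_\xi u(t,x_j+y)|$, so the whole proof reduces to a pointwise $L^2$ bound on $D_{(r,z,w)}u(t,x)$.

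The key step, and the main obstacle, is the estimate
\[
\|D_{(r,z,w)}u(t,x)\|_2\le C_t\,|w|\,\big(G_{t-r}*\k\big)(x-z)\quad\text{or a similar integrable profile}.
\]
To obtain it, we apply $D$ to the mild equation \eqref{def-sol}. Since $X(ds,dy)=\int(\k(y-\cdot))\,w\,\widehat N(ds,d\cdot,dw)$, this gives
\[
D_{(r,z,w)}u(t,x)=w\,u(r,\cdot)\text{-term: } w\int_{\bR}G_{t-r}(x-y)\k(y-z)u(r,y)dy+\int_r^t\!\int_{\bR}G_{t-s}(x-y)D_{(r,z,w)}u(s,y)X(ds,dy).
\]
Using the isometry for $X$ and the moment bound $K_2(T)$, we get a renewal-type inequality for $\phi(s,y):=\|D_{(r,z,w)}u(s,y)\|_2^2$, with a first term bounded by $C w^2\big((G_{t-r}*\k)(x-z)\big)^2$ and a second term given by $\int_r^t\int G_{t-s}\otimes G_{t-s}$ integrated against $f$ of $\phi$. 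We iterate this inequality, using $G_{t-s}*G_{s-r}\le (t-r)G_{t-r}$ in $d=1$ together with Assumption B, to keep the $\k$-convolution structure. We expect this iteration, with the coloured kernel appearing in both the driving term and the second-moment kernel $f$, to be the technical heart of the argument, as in Section 5 of \cite{BJ25}.

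Plugging the bound into the covariance inequality and integrating in $(r,z,w)$, with $m_2<\infty$, yields
\[
|{\rm Cov}|\le C\sum_{j,k}\int_0^t\!\int_{\bR}(G_{t-r}*\k)(x_j+y-z)(G_{t-r}*\k)(x_k-z)\,dz\,dr=C\sum_{j,k}\int_0^t \big((G_{t-r}*\k)*(G_{t-r}*\k)\big)(x_j-x_k+y)\,dr .
\]
Under B(i), $G_{t-r}*\k\in L^1$, so this function of $y$ is integrable and its average over $[-R,R]$ is $O(1/R)$. Under B(ii), it equals $\int_0^t (G_{t-r}*G_{t-r}*R_{1,\alpha})(\cdot)\,dr$, which is $O(|y|^{-(1-\alpha)})$ as $|y|\to\infty$. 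Its average over $[-R,R]$ is then $O(R^{-(1-\alpha)})\to0$. In both cases the averaged covariance tends to $0$, which gives ergodicity. The a.s.\ and $L^2$ convergence of $F_R(t)/R$ then follows from the ergodic theorem, since $\bE[u(t,x)]=1$ and $\sup_x\bE|u(t,x)|^2<\infty$.
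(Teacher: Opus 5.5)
Your overall architecture matches the paper's, and your final integration is correct. You reduce ergodicity to decay of averaged covariances, using bounded Lipschitz test functions and the Poisson covariance inequality. The paper instead uses Lemma 4.2 of \cite{BZ25} with $\cos/\sin$ of linear combinations and applies the Poincar\'e inequality \eqref{poincare} to $\int_0^R(\cdot)\,dx$; the two criteria are interchangeable. You then feed in \eqref{key-D} with $p=2$, and your $O(1/R)$ and $O(R^{-(1-\alpha)})$ rates agree with the paper's $R^{\beta-2}$.

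The gap is the key estimate itself, which you only state as an expectation, and the mechanism you suggest does not close. In your renewal inequality the term to control is
\[
\int_r^t\int_{\bR^2}G_{t-s}(x-y)G_{t-s}(x-y')\Psi_s(y-z)\Psi_s(y'-z)f(y-y')\,dy\,dy'\,ds,\qquad \Psi_s:=G_{s-r}*\k .
\]
Because $y$ is tied to $y'$ through $f(y-y')$, you cannot integrate $y$ out against $G_{s-r}$ to invoke $G_{t-s}*G_{s-r}\le (t-r)G_{t-r}$. The natural ways of using that convolution bound (bounding one factor $\Psi_s$ by its supremum and convolving the other) only give something linear, not quadratic, in $\Psi_t(x-z)$. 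So the ansatz $\phi(s,y)\le A w^2\Psi_s(y-z)^2$ is not reproduced.

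The missing idea is the pointwise light-cone inequality. For $r<s<t$, the conditions $|x-y|<t-s$ and $|y-a|<s-r$ force $|x-a|<t-r$. Hence $G_{t-s}(x-y)G_{s-r}(y-a)\le G_{t-s}(x-y)G_{t-r}(x-a)$, and since $\k\ge 0$, $G_{t-s}(x-y)\Psi_s(y-z)\le G_{t-s}(x-y)\Psi_t(x-z)$. The displayed term is then at most $\Psi_t(x-z)^2\int_0^{t-r}J_2(s)\,ds$, where $J_2(s)=\int_{\bR^2}G_s(y)G_s(y')f(y-y')dydy'$; this is finite by Dalang's condition. With this, $\psi(t):=\sup_x\phi(t,x)/(w^2\Psi_t(x-z)^2)$ satisfies $\psi(t)\le C+C\int_r^t\psi(s)J_2(t-s)\,ds$, which Lemma 15 of \cite{dalang99} closes. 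That only works once $\psi$ is known to be finite. You would therefore have to run the argument on Picard iterates $Du_n$ (with $Du_0=0$), pass to the limit by closability of $D$, and also justify that $D$ commutes with the Walsh integral against $X$.

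The paper sidesteps all of this. From the chaos expansion and the product formula (Lemma \ref{prod-lem}) it obtains the exact representation
\[
D_{r,y,z}u(t,x)=\int_{\bR}u(r,y')\,v^{(r,y',z)}(t,x)\,\k(y-y')\,dy',
\]
where $v^{(r,y',z)}$ solves \eqref{HAM-delta}. Independence of $\cF_r$ and $\cF_{r,t}$ factorizes the moments. The identity $v=2G_{t-r}(x-y')v$ is the same light-cone fact, read off the kernels $g_n$, and it gives $\|v^{(r,y',z)}(t,x)\|_p\le 2C|z|G_{t-r}(x-y')$ for every $p$. This all-$p$ bound is what the QCLT later requires.
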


\begin{theorem}[Limiting covariance]
\label{cov-th}
If Assumption B holds, then
for any $t,s>0$,
\[
\lim_{R \to \infty}\frac{1}{R^{\beta}}\bE[F_R(t) F_R(s)]=K(t,s) \quad \mbox{is finite},
\] 
where 
\begin{equation}
\label{def-beta}
\beta:=\left\{
\begin{array}{ll} 1 & \mbox{if $\k \in L^1(\bR)$,} \\
\alpha+1 & \mbox{if $\k=R_{1,\alpha/2}$ for some $\alpha \in (0,1)$. }
\end{array} \right.
\end{equation}
In particular, $R^{-\beta} \sigma_R^2(t) \to K(t,t)$ as $R \to \infty$, for any $t>0$.
\end{theorem}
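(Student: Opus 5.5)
We will compute the covariance $\bE[F_R(t)F_R(s)]$ from the chaos-type (Poisson–Malliavin) structure of $u$, reduce it to an integral of the spatial covariance $\rho_{t,s}(x-y)={\rm Cov}(u(t,x),u(s,y))$, and then let $R\to\infty$.

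\textbf{Step 1: the covariance of $u$.} Using the mild formulation \eqref{def-sol} and the isometry of the Walsh integral with respect to $X$, we write $u(t,x)-1=\int_0^t\int_{\bR}G_{t-r}(x-y)u(r,y)X(dr,dy)$. Since $X_t(\varphi)=L_t(\varphi*\k)$, this integral equals an integral against $L$ with integrand $\big((G_{t-r}(x-\cdot)u(r,\cdot))*\k\big)(w)$. By the isometry of $L$ and stationarity of $u$ (Lemma 5.5 of \cite{BJ25}),
\[
\bE[(u(t,x)-1)(u(s,x'))]
= m_2\int_0^{t\wedge s}\!\!\int_{\bR^2}G_{t-r}(x-y)G_{s-r}(x'-y')f(y-y')\,\bE[u(r,y)u(r,y')]\,dy\,dy'\,dr .
\]
Because $u$ is stationary, $g_r(y-y'):=\bE[u(r,y)u(r,y')]$ depends only on $y-y'$. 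Since $\k\ge0$, $f\ge 0$, and $G\ge0$, iterating the identity shows that $g_r\ge 1$. We also have $g_r\le K_2(T)^2$ by Theorem 5.2 of \cite{BJ25}. Thus $\rho_{t,s}(x-x')=m_2\int_0^{t\wedge s}(G_{t-r}*\widetilde G_{s-r}*(f g_r))(x-x')dr$.

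\textbf{Step 2: reduction to $\rho$.} Next we write
\[
\bE[F_R(t)F_R(s)]=\int_{-R}^R\int_{-R}^R\rho_{t,s}(x-x')dxdx'=\int_{\bR}\rho_{t,s}(z)\,|[-R,R]\cap[-R+z,R+z]|\,dz .
\]

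\emph{Case (i), $\k\in L^1$.} Here $f=\k*\widetilde\k\in L^1$, so $f g_r\le K_2^2 f\in L^1$. The supports of $G_{t-r}$ and $G_{s-r}$ are bounded, hence $\rho_{t,s}\in L^1(\bR)$. Since $R^{-1}|[-R,R]\cap[-R+z,R+z]|\to 2$ boundedly, dominated convergence gives
\[
K(t,s)=2\int_{\bR}\rho_{t,s}(z)dz<\infty .
\]

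\emph{Case (ii), $\k=R_{1,\alpha/2}$.} Then $f=R_{1,\alpha}=C|x|^{\alpha-1}$, and we substitute $z=Rw$. We need the asymptotics $\rho_{t,s}(Rw)\sim R^{\alpha-1}\,C|w|^{\alpha-1}\,m_2\int_0^{t\wedge s}(t-r)(s-r)\lim_{|y|\to\infty}g_r(y)\,dr$. This limit is the crux. We expect $g_r(y)\to 1=(\bE u)^2$ as $|y|\to\infty$, i.e. decorrelation at large distances; this is essentially what the ergodicity of Theorem \ref{ergodic-th} provides. With that limit,
\[
K(t,s)=m_2C_{1,\alpha}\int_0^{t\wedge s}(t-r)(s-r)dr\int_{-1}^{1}\int_{-1}^{1}|x-x'|^{\alpha-1}dxdx'<\infty ,
\]
where we used $\int G_{t-r}=t-r$. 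Domination is handled by splitting $|w|\le\delta$ from $|w|>\delta$. On $|w|>\delta$ the bound $g_r\le K_2^2$ together with $G_{t-r}*\widetilde G_{s-r}*|\cdot|^{\alpha-1}\lesssim |Rw|^{\alpha-1}$ suffices. On $|w|\le\delta$, local integrability of $|\cdot|^{\alpha-1}$ gives a contribution $O(\delta^{\alpha})R^{\alpha+1}$.

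\textbf{Main obstacle.} The hardest step is showing $g_r(y)\to1$ as $|y|\to\infty$, uniformly in $r\le t\wedge s$, in case (ii). We plan to prove it directly from the recursion in Step 1. Put $\psi_r(y)=g_r(y)-1$. By the identity in Step 1 it satisfies
\[
\psi_r(y)=m_2\int_0^r\big(G_{r-\theta}*\widetilde G_{r-\theta}*(f\,(1+\psi_\theta))\big)(y)\,d\theta ,
\]
and we note $\psi\ge 0$. The first term is $O(|y|^{\alpha-1})$. The second is bounded via a Gronwall-type iteration in the sup norm over $\{|y|\ge M\}$, splitting $f\psi_\theta$ into the regions near $y$ and far from $y$. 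This yields $\sup_{r\le T}\psi_r(y)\to0$, which closes case (ii).
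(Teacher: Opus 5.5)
Your proposal is correct, but it takes a different route from the paper. The paper never writes down a covariance equation. It uses the chaos expansion \eqref{chaos-u} and the identity $\langle \widetilde f_n^*(\cdot,t,x),\widetilde f_n^*(\cdot,s,y)\rangle_{\cH^{\otimes n}}=m_2^n\langle \widetilde f_n(\cdot,t,x),\widetilde f_n(\cdot,s,y)\rangle_{\cH_0^{\otimes n}}$ to show that $u$ has exactly the same covariance as the solution $U$ of the Gaussian model \eqref{HAM-W}. Hence $\bE[F_R(t)F_R(s)]=\bE[F_R'(t)F_R'(s)]$, and the paper imports the limits from relation (4.1) of \cite{NZ22} (integrable case) and Remark 2 of \cite{DNZ20} (Riesz case).

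You instead use the Walsh isometry to get a closed integral identity for ${\rm Cov}(u(t,x),u(s,x'))$ in terms of the equal-time two-point function $g_r$. This needs only the bounds $1\le g_r\le K_2(T)^2$. Case (i) is then a one-line dominated convergence, and case (ii) is a scaling argument plus the decorrelation $g_r(y)\to 1$; your domination split at $|w|=\delta$ handles the limit interchange correctly.

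The trade-off: the paper's route is shorter and identifies $K$ with the known Gaussian constant for free, but it rests on external results. Yours is self-contained and needs no chaos expansions. It also produces the explicit constant $K(t,s)=m_2C_{1,\alpha}\int_{[-1,1]^2}|x-x'|^{\alpha-1}dx\,dx'\int_0^{t\wedge s}(t-r)(s-r)\,dr$ in the Riesz case, which has the form $c_\alpha\int_0^{t\wedge s}(t-r)(s-r)\,dr$ quoted from \cite{DNZ20}. It also shows why only the first-order (first-chaos) term survives in that case.

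Two remarks on your ``main obstacle''. First, it is much easier than you expect. Since $1+\psi_\theta=g_\theta\le K_2(T)^2$ and $G_{r-\theta}*\widetilde G_{r-\theta}$ is supported in $[-2T,2T]$, you get directly $0\le\psi_r(y)\le m_2K_2(T)^2\frac{T^3}{3}C_{1,\alpha}(|y|-2T)^{\alpha-1}$ for $|y|>2T$, uniformly in $r\le T$. No Gronwall iteration or near/far splitting is needed. Second, the heuristic that this decorrelation is ``what ergodicity provides'' is inaccurate: ergodicity gives only averaged decorrelation, not pointwise decay of $g_r$. You don't rely on it, though.
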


\begin{theorem}[Quantitative Central Limit Theorem]
\label{QCLT}
Suppose that Assumption B holds, and there exists $p \in (1,2]$ such that 
\begin{equation}
\label{mp-m2p}
m_{p}<\infty \quad \mbox{and} \quad m_{2p}<\infty.
\end{equation}

If $\k \in L^1(\bR)$, then
for any $t>0$,
\[
{\rm dist}\left( \frac{F_R(t)}{\sigma_R(t)},Z\right) \leq  C_t R^{-(1-\frac{1}{p})}
\]

If $\k=R_{1,\alpha/2}$ for some $\alpha \in (0,1)$, and $p>\frac{2}{2-\alpha}$, then
\[
{\rm dist}\left( \frac{F_R(t)}{\sigma_R(t)},Z\right) \leq
C_t R^{-\e}  \quad \mbox{for any $\varepsilon \in \big(0, 1-\frac{1}{p}-\frac{\alpha}{2}\big)$}.
\]

Here $C_t>0$ is a constant depending on $t$, and ${\rm dist}$ is the Fortet-Mourier distance, the 1-Wasserstein distance, or the Kolmogorov distance.
\end{theorem}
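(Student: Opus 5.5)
Because the abstract singles out \cite{trauthwein25}, I will apply its second-order Poincar\'e inequality on Poisson space with $L^p$-type bounds. For a Poisson functional $F$ with $\bE F=0$ and $\bE F^2=1$, that result bounds ${\rm dist}(F,Z)$ by a sum of three quantities. Two are $L^{p}$-type integrals of the add-one cost operators $D_\xi F$ and $D^2_{\xi,\eta}F$, roughly
\[
\int\Big(\int \|D^2_{\xi_1,\xi_3}F\|_{2p}\|D^2_{\xi_2,\xi_3}F\|_{2p}\,\fm(d\xi_3)\Big)^{p/2}\fm(d\xi_1)\fm(d\xi_2)
\]
and its analogue with one $D^2$ replaced by $D$. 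The third is a term of the form $\int \|D_\xi F\|_{2p}^{2p}\,\fm(d\xi)$, up to the exponent normalisation of that paper; for the Kolmogorov distance there are a couple of extra similar terms. I apply this to $F=F_R(t)/\sigma_R(t)$. The Wasserstein and Fortet--Mourier cases then follow from the same bounds, and Theorem \ref{cov-th} gives $\sigma_R^2(t)\asymp R^{\beta}$.

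The first real step is to control the Malliavin derivatives of the solution. Write $\xi=(r,y,z)$. Using the mild formulation \eqref{def-sol} and the colored structure $X(ds,dy)$, for $r<t$ the add-one cost satisfies
\[
D_\xi u(t,x)=z\,u(r,\cdot)\text{-term}+\int_r^t\int G_{t-s}(x-w)D_\xi u(s,w)X(ds,dw).
\]
Concretely, the first term is $z\int G_{t-r}(x-w)\k(w-y)u(r,w)\,dw=z\big(G_{t-r}(x-\cdot)\k(\cdot-y)\big)$ tested against $u(r,\cdot)$. I then prove by a Gr\"onwall-type iteration, using the Rosenthal/BDG moment bounds of \cite{BJ25} with $m_{2p}<\infty$ and $m_p<\infty$ and the moment bound \eqref{mom-u} with exponent $2p$, that
\[
\|D_\xi u(t,x)\|_{2p}\le C_t|z|\,(G_{t-r}*\k)(x-y)\quad\text{or a bound of comparable shape,}
\]
and similarly
\[
\|D^2_{\xi_1,\xi_2}u(t,x)\|_{2p}\le C_t|z_1||z_2|\big[(G*\k)(G*\k)\text{-products}\big]
\]
in the spirit of \cite{BZ24}. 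The wave kernel is bounded, $G\le 1/2$, so these iterated bounds close. Here Remark \ref{rem-M} guarantees that $\cM_{2p}(t)<\infty$, and that $\|G_t*\k\|_{L^p}$ is finite in both cases of Assumption B.

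Next I integrate in $x\in[-R,R]$ by Minkowski. This gives
\[
\|D_\xi F_R\|_{2p}\les|z|\int_{-R}^R \varphi_{t,r}(x-y)\,dx,\qquad \varphi_{t,r}:=G_{t-r}*\k,
\]
and likewise for $D^2$. The structure now matches \cite{BZ24}, with $1_{\{|x|<t\}}$ replaced by $\varphi$.

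\emph{Case $\k\in L^1$.} Here $\varphi\in L^1\cap L^\infty$ is integrable, and $\int_{-R}^R\varphi(x-y)\,dx$ is bounded and supported essentially on $|y|\lesssim R$. So each term of Trauthwein's bound is of order $R\cdot R^{-p\beta/2}\cdot(\text{const})$ after normalising by $\sigma_R^{p}$ or $\sigma_R^{2p}$. Concretely, the $\int\|DF\|^{2p}$-type term is of order $R/R^{p}$, and the $p$-th root conventions give the rate $R^{1-p}$, i.e.\ ${\rm dist}\les R^{-(1-1/p)}$ after the exponents in \cite{trauthwein25} are matched. The factors $\int|z|^p\nu$ and $\int|z|^{2p}\nu$ appear precisely through \eqref{mp-m2p}.

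\emph{Riesz case.} Here $\varphi$ decays like $|x|^{-(1-\alpha/2)}$, which is not integrable. Hence $\int_{-R}^R\varphi(x-y)\,dx\les R^{\alpha/2}$ for all $y$, and it is not localized in $y$. I will bound the $y$-integrals by Hardy--Littlewood--Sobolev (Theorem \ref{HLS}), which needs $p>\frac{2}{2-\alpha}$, together with the normalisation $\sigma_R\asymp R^{(1+\alpha)/2}$. The powers then give roughly $R^{-(1-1/p-\alpha/2)}$. I expect a logarithmic or boundary loss to force the strict inequality $\e<1-\frac1p-\frac{\alpha}2$. I would handle this by splitting $|y|\le R^{1+\delta}$ and $|y|>R^{1+\delta}$, or by interpolating HLS exponents.

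The main obstacle is the second step: the pointwise $L^{2p}$ bounds on $D u$ and especially $D^2u$ for the colored noise. The coloration destroys the simple indicator structure used in \cite{BZ24}, so the iteration must be closed with $G*\k$ kernels. The moment inequality for the $X$-integral then involves $\big(\int|G*\k|^2\big)^{1/2}$ and $\big(\int|G*\k|^{2p}\big)^{1/2p}$ norms rather than pointwise products. I would first attempt the bound $\|D_\xi u(t,x)\|_{2p}\le C|z|\,\big(G_{t-r}*\k\big)(x-y)$ by induction on the Picard iterates, using positivity of $\k$ and $G$.
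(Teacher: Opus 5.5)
Your overall architecture matches the paper: Trauthwein's bound, $\sigma_R^2(t)\asymp R^{\beta}$ from Theorem \ref{cov-th}, Minkowski in $x$, then Young or Hardy--Littlewood--Sobolev in $y$. Your target shape $\|D_{r,y,z}u(t,x)\|_{2p}\les |z|\,(G_{t-r}*\k)(x-y)$ is also the right one. The gap is that the step you flag as the main obstacle is left unproved, and the mechanism you suggest would not deliver it. The moment inequalities for $X$-integrals (Theorem \ref{mom-SZ-th}, i.e.\ Theorem 3.7 of \cite{BJ25}) are stated in terms of $\sup_w \bE|Z(s,w)|^p$. So a Picard/Gr\"onwall iteration for $\Psi(t,x)=D_{r,y,z}u(t,x)$, closed with them and with $G\le \frac12$, only gives a bound uniform in $(x,y)$, essentially $\sup_{x,y}\|D_{r,y,z}u(t,x)\|_{2p}\les |z|$. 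That bound carries no localization in $x-y$. After integrating over $x\in[-R,R]$, the resulting bound for $\|D_\xi F_R(t)\|_{2p}$ does not decay in $y$, and every $\gamma_1,\dots,\gamma_7$ (all of which integrate over $y\in\bR$) is bounded by $+\infty$. For $D^2u$ you do not write down the equation or its source term at all.

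The paper gets the localized bounds \eqref{key-D} and \eqref{key-D2} without iterating on $Du$. The chaos expansion and the product formula of Lemma \ref{prod-lem} (valid because the kernels live on disjoint time intervals) give
\[
D_{r,y,z}u(t,x)=\int_{\bR}u(r,y')\,v^{(r,y',z)}(t,x)\,\k(y-y')\,dy',
\]
where $v^{(r,y',z)}$ solves \eqref{HAM-delta}. Here $u(r,y')$ is $\cF_r$-measurable and $v^{(r,y',z)}(t,x)$ is $\cF_{r,t}$-measurable, so they are independent. Picard gives only the uniform bound $\|v\|_p\le C|z|$. The localization comes from the support identity \eqref{2Gv}, $v^{(r,y,z)}(t,x)=2G_{t-r}(x-y)\,v^{(r,y,z)}(t,x)$, which holds because every chaos kernel of $v$ lives in the light cone $\{|x-y|<t-r\}$. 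For $D^2u$, the same factorization gives three independent factors: $u(r_1,y_1')$, $v^{(r_1,y_1',z_1)}(r_2,y_2')$ and $v^{(r_2,y_2',z_2)}(t,x)$.

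If you want to keep a Picard route, what closes it is the cone monotonicity of $G$, not its boundedness. Apply Minkowski inside the Rosenthal terms so that $\|\Psi(s,w)\|_{2p}$ appears rather than $\sup_w$. Then use that $|x-w|<t-s$ implies $(G_{s-r}*\k)(w-y)\le (G_{t-r}*\k)(x-y)$, by positivity of $\k$. With this, $h(s)=\sup_w \|\Psi(s,w)\|_{2p}/(G_{s-r}*\k)(w-y)$ satisfies a closed Gr\"onwall inequality. For $D^2u$ with $r_1<r_2$, the analogous weight is $\int G_{t-r_2}(x-w)\k(w-y_2)(G_{r_2-r_1}*\k)(w-y_1)\,dw$. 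Up to a constant, this is the right-hand side of \eqref{key-D2}, and it has the same monotonicity in $(t,x)$.

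Make sure both $\k$-factors stay attached to a common $x$ before you integrate over $[-R,R]$, as in \eqref{D2u}. A product $(\varphi_{t,R}*\k)(y_1)(\varphi_{t,R}*\k)(y_2)$ of separately integrated factors loses a factor of order $R$, and the resulting bound on $\gamma_2$ then grows with $R$.

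Finally, the terms you recall from \cite{trauthwein25} are closer to the Last--Peccati--Schulte bound. The bound actually used is Proposition \ref{tara}, and for $\gamma_3$ you need a choice of $q$ as in \eqref{def-q} to reach $R^{-(1-1/p)}$.
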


\begin{theorem}[Functional Central Limit Theorem]
\label{FCLT}
Under the hypotheses of Theorem \ref{QCLT}, for any $R>0$,
the process $\{F_R(t)\}_{t\geq 0}$ has a $\gamma$-H\"older continuous modification (denoted also $F_R$), for any $\gamma \in (0,\frac{\beta}{2})$, where $\beta$ is given by \eqref{def-beta}.
Moreover,
\[
\frac{1}{R^{\beta/2}}F_R(\cdot) \stackrel{d}{\to} \cG(\cdot) \quad \mbox{in $C[0,\infty)$ as $R \to \infty$},
\]
where $\{\cG(t)\}_{t\geq 0}$ is a zero-mean Gaussian process with covariance 
$\bE[\cG(t) \cG(s)]=K(t,s)$, and $K(t,s)$ given by Theorem \ref{cov-th}. Here $\stackrel{d}{\to}$ denotes the convergence in distribution, and $C[0,\infty)$ is equipped with the uniform convergence on compact sets.
\end{theorem}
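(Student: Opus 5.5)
The proof splits into two parts, as usual for functional CLTs: convergence of finite-dimensional distributions, and tightness in $C[0,\infty)$. Tightness will also give the Hölder continuous modification via Kolmogorov's criterion.

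\emph{Step 1: finite-dimensional distributions.} Fix $0<t_1<\dots<t_m$ and real coefficients $b_1,\dots,b_m$, and set $Y_R=R^{-\beta/2}\sum_i b_i F_R(t_i)$. By Theorem \ref{cov-th}, ${\rm Var}(Y_R)\to \sum_{i,j}b_ib_jK(t_i,t_j)$. We then rerun the argument behind Theorem \ref{QCLT}, i.e. the second-order Poincaré-type bound of \cite{trauthwein25} for Poisson functionals. We need bounds on $\|D_{\xi}Y_R\|_{p}$ and $\|D^2_{\xi,\xi'}Y_R\|_{p}$ with $\xi=(r,y,z)\in Z$. By linearity these follow from the corresponding bounds for each $F_R(t_i)$, which were already obtained in the proof of Theorem \ref{QCLT}. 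The only needed input beyond that proof is that the variance is bounded below. If the limit variance is zero, $Y_R\to 0$ in $L^2(\Omega)$ trivially. Otherwise the normalized distance tends to zero, so $Y_R$ converges to a centred normal with the limiting variance. The Cramér–Wold device then gives convergence of the finite-dimensional distributions to those of $\cG$.

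\emph{Step 2: moment increment estimate.} This is the main obstacle. We aim for
\[
\bE|F_R(t)-F_R(s)|^2\le C_T R^{\beta}|t-s|^{\beta}, \qquad s,t\in[0,T],\ R\ge 1,
\]
or at least an exponent on $|t-s|$ strictly larger than $1$ when raised to a suitable moment. Because the noise has only $p$-moments, we work in $L^2(\Omega)$. From the mild formulation, $F_R(t)=\int_0^t\int_{\bR}\varphi_{t,R}(r,y)u(r,y)X(dr,dy)$ with $\varphi_{t,R}(r,y)=\int_{-R}^R G_{t-r}(x-y)dx$. We split the increment into an integral over $[s,t]$ and an integral over $[0,s]$ with integrand $\varphi_{t,R}-\varphi_{s,R}$. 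The isometry for $X$ gives second moments of the form $m_2\int\int \bE[\dots](\Phi*\k)^2 dy\, dr$, which we control by $K_2(T)^2$ times $\int\|(\Phi(r,\cdot)*\k)\|_{L^2}^2dr$. In Fourier terms this is $\int |\cF\Phi(r,\cdot)(\xi)|^2\mu(d\xi)\,dr$.

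Here $|\cF \varphi_{t,R}(r,\cdot)(\xi)|\le |\cF 1_{[-R,R]}(\xi)|\,\frac{|\sin((t-r)|\xi|)|}{|\xi|}$. The difference $\sin((t-r)|\xi|)-\sin((s-r)|\xi|)$ is bounded by $\min(2,|t-s||\xi|)$. With $\mu(d\xi)=|\xi|^{-\alpha}d\xi/(2\pi)$ in case (ii), or $\mu$ with bounded density $h^2\le\|\k\|_1^2$ in case (i), and $|\cF1_{[-R,R]}|^2\le C\min(R^2,\xi^{-2})$, the substitution $\xi\to \xi/R$ extracts $R^{\beta}$. What remains is $\int \min(R,|\eta|^{-2}R^{-1}\dots)$-type integrals. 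Interpolating $\min(2,|t-s||\xi|)^2\le C|t-s|^{2\theta}|\xi|^{2\theta}$ with small $\theta$ should yield $C R^{\beta}|t-s|^{2\theta}$ for some $\theta>0$.

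This exponent alone is too weak for Kolmogorov in $L^2$. So we instead use hypercontractivity-free higher moments: the BDG-type bound for Walsh integrals with respect to $X$ from \cite{BJ25} in $L^{q}$, $q\le 2p$ using $m_{2p}<\infty$, gives $\bE|F_R(t)-F_R(s)|^{q}\le C R^{q\beta/2}|t-s|^{q\gamma'}$. Choosing $q$ with $q\gamma'>1$ yields both the Hölder modification of exponent up to $\gamma'-1/q$ for each $R$ and tightness of $R^{-\beta/2}F_R$, since $F_R(0)=0$. If the exponent $\gamma<\beta/2$ claimed requires it, we use the cruder time-direction bound $\int_s^t\|\varphi_{t,R}\|\dots$ directly, which gives a factor $|t-s|^{\beta}$ from $\int_s^t (t-r)^{\beta-1}\cdots$.

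\emph{Step 3: conclusion.} Tightness plus convergence of the finite-dimensional distributions gives $R^{-\beta/2}F_R\stackrel{d}{\to}\cG$ in $C[0,\infty)$, where $\cG$ is the centred Gaussian process with covariance $K$.
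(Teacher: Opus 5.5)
Your Step 1 is the paper's argument (Cram\'er--Wold, reduction to $X_R/\tau_R\to Z$, and $d_W\le\gamma_1+\gamma_2+\gamma_3$ from Proposition \ref{tara}, estimated as in the proof of Theorem \ref{QCLT}), and it is fine.

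The gap is in Step 2. Tightness, as you set it up, rests on the claim $\bE|F_R(t)-F_R(s)|^{q}\le CR^{q\beta/2}|t-s|^{q\gamma'}$ with $q\le 2p$ and $q\gamma'>1$. You never prove this claim, for two reasons.
\begin{itemize}
\item The Rosenthal-type bound (Theorem \ref{mom-SZ-th}) contains, besides the Hilbertian term that your Fourier computation controls, a second term $\int_0^s\|(\varphi_{t,R}(r,\cdot)-\varphi_{s,R}(r,\cdot))*\k\|_{L^q(\bR)}^q\,dr$, and an analogous one on $[s,t]$. Plancherel does not reach these terms.
\item You never identify $\gamma'$. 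If $\gamma'$ is your interpolation exponent $\theta$, and you take it small (say $\theta\le 1/4$), then $q\gamma'>1$ cannot be achieved, since $q\le 2p\le 4$.
\end{itemize}
Your closing remark about obtaining $|t-s|^{\beta}$ from $\int_s^t(t-r)^{\beta-1}\cdots$ is also not backed by any computation.

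The "main obstacle" is a misdiagnosis: your own $L^2$ computation closes the gap once you drop the interpolation.
\begin{itemize}
\item Use $|\sin a-\sin b|\le|a-b|$ directly. The $[0,s]$ piece is then at most $m_2K_2(T)^2\,s\,|t-s|^2\int_{\bR}|\cF 1_{[-R,R]}(\xi)|^2\mu(d\xi)$.
\item In case (i), this integral is at most $2R\|\k\|_{L^1(\bR)}^2$, by $|\cF\k|\le\|\k\|_{L^1(\bR)}$ and Plancherel.
\item In case (ii), it equals $CR^{1+\alpha}$, because $\int_{\bR}\sin^2\eta\,|\eta|^{-2-\alpha}d\eta<\infty$ for $\alpha\in(0,1)$.
\item The bound $|\sin x|\le|x|$ gives $R^{\beta}(t-s)^3$ for the $[s,t]$ piece.
\item Pulling out $K_2(T)^2$ is legitimate because $\varphi_{t,R}-\varphi_{s,R}$, $\k$ and $f$ are all nonnegative.
\end{itemize}
Hence $\bE|F_R(t)-F_R(s)|^2\lesssim R^{\beta}|t-s|^2$ for all $R>0$. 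This is \eqref{incr-F} with $p'=2$. Together with $F_R(0)=0$, it already yields a continuous modification and tightness of $R^{-\beta/2}F_R$ in $C[0,T]$.

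The paper instead proves \eqref{incr-F} for every $p'\ge 2$ with $m_{p'}<\infty$, working in physical space. It bounds the terms $A,B,A',B'$ using the Lipschitz estimate \eqref{tsPhiDif}, together with Young's inequality or Theorem \ref{HLS}. It then checks that $R^{p'/q''}\le R^{\beta p'/2}$ for $R\ge 1$. This is precisely the handling of the non-Hilbertian term that your $L^q$ claim skips. For the Hilbertian terms $A,A'$, your Fourier computation is an equivalent alternative to the paper's Young/HLS bound with $q'=2/\beta$.

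The H\"older modification is obtained in the paper by applying Kolmogorov--Chentsov to \eqref{incr-F}. This gives H\"older exponents below $1-1/p'$, with $p'=2$ or $p'=2p$.
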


The proofs of these theorems are based on a key estimate  which shows that the moments of the Malliavin derivative of the solution of \eqref{HAM} can be bounded, up to a constant, by a deterministic function (see relation \eqref{key-D} below). Similar estimates appear in all references dedicated to QCLT, where they play a crucial role. To prove this key estimate, we proceed as in \cite{BZ24}, developing a connection to (hAm) with delta initial velocity by using the form \eqref{def-G} of the fundamental solution $G$.

Theorem \ref{ergodic-th} follows from the key estimate \eqref{key-D} and a criterion which may be of independent interest (Lemma \ref{lem-key-Z} below). 
To prove Theorem \ref{cov-th} we show that the covariance of the solution of \eqref{HAM} coincides with the covariance of the solution of (hAm) driven by $\sqrt{m_2}W$, where $W$ is the Gaussian colored noise with covariance \eqref{def-cov-W}.

To prove Theorem \ref{QCLT}, we apply the recent result of Trauthwein \cite{trauthwein25}, which gives the optimal rates for the Wasserstein and Kolmogorov distances between $F/\sqrt{{\rm Var}(F)}$ and $Z \sim N(0,1)$, for a centered random variable $F$ with finite variance, which is Malliavin differentiable with respect to a compensated Poisson random measure. These estimates involve the first and second order Malliavin derivatives. Therefore, to  apply the result in \cite{trauthwein25}, we develop a similar key estimate for the second Malliavin derivative of the solution (see relation \eqref{key-D2} below). Different techniques are used for estimating the 7 quantities $\gamma_1,\ldots,\gamma_7$ which appear in the result of \cite{trauthwein25}, which rely on Young's inequality in the case when $k$ is integrable, respectively on Hardy-Littlewood-Sobolev inequality when $\k$ is the Riesz kernel. Finally, Theorem \ref{FCLT} follows by the classical method of finite dimensional convergence and tightness.

\begin{remark}
{\rm a) If $\k \in L^1(\bR)$, then $f=\k * \tilde{\k} \in L^1(\bR)$ by Young's inequality. In this case, the variance $\sigma_R^2(t)$ decays with rate $R$, as in the L\'evy white noise case and the Gaussian colored noise case with integrable kernel $f$ (see Theorem 1.2 of \cite{NZ22}). The decay rate $R^{-(1-\frac{1}{p})}$  in the QCLT (in the $d_{W}$, $d_{FM}$ or $d_K$ distances) is the same as in the L\'evy white noise case (see Theorem 1.1.(iii) of \cite{BZ24}), and depends on the parameter $p$ from \eqref{mp-m2p}. This can be explained since formally, the L\'evy white noise can be viewed as a L\'evy colored noise with integrable kernel $\k=\delta_0$. When $p=2$ (i.e. $m_4<\infty)$, this rate coincides with the rate $R^{-1/2}$ obtained in the case of the Gaussian colored noise case with integrable kernel $f$, for the QCLT in the total variation distance (see Theorem 1.2 of \cite{NZ22}).

\medskip

b) If $\k=R_{1,\alpha/2}$ for some $\alpha \in (0,1)$, then $f =R_{1,\alpha}$. In this case, using the parametrization $\alpha=2H-1$ with $H \in (\frac{1}{2},1)$, we see that the decay rate $R^{2H}$ of $\sigma_R^2(t)$ is the same as in the Gaussian colored noise case (see  Proposition 3.3 of \cite{DNZ20}). In this Gaussian case, Theorem 1.1 {\em ibid.} gives the rate $R^{-(1-H)}=R^{-\frac{1-\alpha}{2}}$ for the QCLT in the total variation distance, which is almost the same as the rate $R^{-\e}$ with $\e \in (0,\frac{1-\alpha}{2})$ obtained in Theorem \ref{QCLT} above when $p=2$ (i.e. $m_4<\infty$).
}
\end{remark}

This article is organized as follows. In Section \ref{section-prelim}, we include some preliminaries about Poisson-Malliavin calculus and moment inequalities with respect to noises $L$ and $X$. In Sections \ref{section-Mall1} and \ref{section-Mall2}, we prove some key estimates for the first, respectively the second,  Malliavin derivative of the solution. In Section \ref{section-proofs}, we present the proofs of Theorems \ref{ergodic-th}, \ref{cov-th}, \ref{QCLT} and \ref{FCLT}. Appendix \ref{appA} contains some auxiliary results about Poisson-Malliavin calculus. Appendix \ref{appB} presents some inequalities for Riesz potentials.

\medskip

We conclude the introduction with few words about the notation. We write $a\les b$ to indicate that $a \leq C b$ for some positive constant $C>0$ that does not depend on $(a,b)$. 
%We write $a \sim b$ if $a \les b$ and $b\les a$.
For any $p\geq 1$, we denote by $\|\cdot \|_p$ the norm in $L^p(\Omega)$.
%% END Intro

%% BEGIN Prelims
\section{Preliminaries}
\label{section-prelim}

In this section we include some preliminaries about Poisson-Malliavin calculus and some moment inequalities for the stochastic integral with respect to $X$.

\subsection{Malliavin calculus}

In this section, we include some basic material about Malliavin calculus with respect to the compensated Poisson random measure $\widehat{N}$. We refer the reader to Section 2.2 of \cite{BZ25} for more details. Additional facts are included in Appendix \ref{appA}.

\medskip

We consider the Hilbert space $\cH=L^2(Z,\cZ,\fm)$, where
\[
({\bf Z},\cZ,\fm)=\big(\bR_{+} \times \bR \times \bR_0,\ \cB(\bR_+)\otimes \cB(\bR) \otimes \cB(\bR_0), \ {\rm Leb} \times {\rm Leb} \times \nu\big).
\]

\medskip

$\bullet$
Any random variable $F \in L^2(\Omega)$ which is $\cF^N$-measurable has the {\em Poisson-chaos expansion}:
\begin{equation}
\label{Poisson-chaos}
F=\bE(F)+\sum_{n\geq 1}I_n(f_n), \quad \mbox{for some $f_n \in \cH^{\odot n}$},
\end{equation}
and the series is orthogonal in $L^2(\Omega)$. Here $I_n$ is the multiple integral with respect to $\widehat{N}$ and $\cH^{\odot n}$ is the set of symmetric functions in $\cH^{\otimes n}$.
For any $f \in \cH^{\otimes n}$,
\[
\bE[I_n(f)]=0 \quad \mbox{and} \quad \bE|I_n(f)|^2=n! \|\widetilde{f}\|_{\cH^{\otimes n}}^2,
\]
where $\widetilde{f}$ is the symmetrization of $f$:
\[
\widetilde{f}(\xi_1,\ldots,\xi_n)=\frac{1}{n!}\sum_{\rho \in S_n}f(\xi_{\rho(1)},\ldots,\xi_{\rho(n)}),
\]
$S_n$ being the set of all permutations of $1,\ldots,n$.
Moreover, $I_n(f)=I_n(\widetilde{f})$ for any $f \in \cH^{\otimes n}$.

\medskip

$\bullet$
For any random variable $F \in L^2(\Omega)$ with chaos expansion \eqref{Poisson-chaos}, we define the {\em Malliavin derivative} of $F$ by:
\[
D_{\xi}F=\sum_{n\geq 1}nI_{n-1}\big(f_n(\cdot,\xi)\big), \quad \mbox{for any $\xi \in {\bf Z}$},
\]
provided that provided that the series converges in $L^2(\Omega)$, that is 
\begin{equation}
\label{check-D1}
\bE\|DF\|_{\cH}^2=\sum_{n\geq 1}nn!  \|f_n\|_{\cH^{\otimes n}}^2 <\infty.
\end{equation}
 In this case, we write $F \in {\rm dom}(D)$.
We have the following {\em Poincar\'e inequality}:
\begin{equation}
\label{poincare}
{\rm Var}(F) \leq \bE\|DF\|_{\cH}^2 \quad \mbox{for any $F \in {\rm dom}(D)$.}
\end{equation}

The following chain rule holds: for any $F \in {\rm dom}(D)$,
\[
D_{\xi}\varphi(F) =\varphi(F+D_{\xi}F)-\varphi(F) \quad \mbox{for any $\xi \in {\bf Z}$}.
\]
Hence, if $\varphi$ is Lipschitz with Lipschitz constant ${\rm Lip}(\varphi)$, 
$|D_{\xi}\varphi(F)| \leq {\rm Lip}(\varphi)|D_{\xi}F|$, and
\begin{equation}
\label{chain1}
\|D_{\xi}\varphi(F)\|_p \leq {\rm Lip}(\varphi)\|D_{\xi}F\|_p \quad \mbox{for any $p>0$}.
\end{equation}

If $F \in {\rm Dom}(D)$ is such that $F$ is $\cF_t$-measurable for some $t \in \bR_{+}$, then 
\[
D_{r,x,z}F=0 \quad \mbox{for any $r>t$, $x \in \bR$ and $z \in \bR_0$}.
\]

$\bullet$
Similarly, we can define the second Malliavin derivative $D^2 F$ as follows:
If $F$ has the chaos expansion \eqref{Poisson-chaos}, we let
\begin{align*}
D^2_{\xi_1,\xi_2} F = D_{\xi_1} D_{\xi_2} F
= \sum_{n \geq 2} n(n-1) I_{n-2}\big( f_{n-2}(\cdot,\xi_1, \xi_2 )\big), \quad \mbox{for $(\xi_1,\xi_2) \in {\bf Z}^2$},
\end{align*}
provided that the series converges in $L^2(\Omega)$, that is
\begin{equation}
\label{check-D2}
\bE\|D^2F\|_{\cH^{\otimes 2}}^2=\sum_{n \geq 2} n(n-1) n! \| f_n \|^2_{\cH^{\otimes n}} < \infty.
\end{equation}
In this case, we write $F \in {\rm dom}(D^2)$.

\medskip

$\bullet$
 Let $\delta:{\rm Dom}(\delta) \to L^2(\bR)$ be the adjoint of $D$, whose domain ${\rm Dom}(\delta)$ is the set of $V \in L^2(\Omega;\cH)$ for which there exists a constant $C=C_V>0$ depending on $V$, such that 
\[
\big|\bE \langle DF,V \rangle_{\cH}\big| \leq C \|F\|_2 \quad \mbox{for any $F \in {\rm Dom}(D)$}.
\] 
We say that $\delta(V)$ is 
{\em the Skorohod integral} of $V$ with respect to $\widehat{N}$, and we write
\[
\delta(V)=\int_{\bR_{+}}\int_{\bR} \int_{\bR_0}V(t,x,z)\widehat{N}(\delta t, \delta x, \delta z).
\]
By duality, for any $V \in {\rm Dom}(\delta)$,
\[
\bE\langle DF,V \rangle_{\cH}=\bE[F \delta(V)] \quad \mbox{for any $F \in {\rm Dom}(D)$}.
\]

If $V \in L^2(\Omega;\cH)$ is predictable, then $V \in {\rm dom}(\delta)$ and $\delta(V)$ coincides with the It\^o integral with respect to $\widehat{N}$. (A process $\{V(\xi);\xi \in {\bf Z}\}$ is {\em predictable} if it is measurable with respect to the predictable $\sigma$-field on $\Omega \times {\bZ}$.)

\subsection{Moment inequalities}

In this section, we include some moment inequalities which are used in the sequel.

We recall Rosenthal's inequality for the stochastic integral with respect to $L$. 

\begin{proposition}[Proposition 3.1 of \cite{BN16}]
\label{ros-prop}
Let $\Phi=\{\Phi(t,x);t\in [0,T],x \in \bR\}$ be a predictable process such that $\Phi \in L^2(\Omega \times [0,T] \times \bR)$. If $p\geq 2$ is such that $m_p<\infty$,
then
\begin{equation}
\label{rosenthal}
\bE\left|\int_0^T \int_{\bR}\Phi(t,x) L(dt,dx)\right|^p \leq \cC_p \left\{ \bE \left( \int_0^T \int_{\bR} |\Phi(t,x)|^2 dx dt \right)^{p/2} + \bE \int_0^T \int_{\bR}|\Phi(t,x)|^p dxdt\right\},
\end{equation}
where $\cC_p=2^{p-1}B_p^p(m_2^{p/2} \vee m_p)$ and $B_p$ is the constant in Rosenthal's inequality.
\end{proposition}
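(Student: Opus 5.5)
The plan is to view the integral as a purely discontinuous martingale with respect to the compensated Poisson measure and combine the Burkholder--Davis--Gundy inequality with a reduction from the quadratic variation to its compensator. By definition of $L$, for a predictable $\Phi$,
\[
M_s:=\int_0^s\int_{\bR}\Phi(t,x)L(dt,dx)=\int_0^s\int_{\bR}\int_{\bR_0}H(t,x,z)\widehat{N}(dt,dx,dz),\qquad H(t,x,z):=\Phi(t,x)z .
\]
Here $H$ is predictable on $\Omega\times{\bf Z}$. I would first prove the bound for elementary $\Phi$ and then pass to the general predictable $\Phi \in L^2(\Omega\times[0,T]\times\bR)$ by $L^2$-approximation and Fatou's lemma. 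In the elementary case $(M_s)_{s\le T}$ is a square-integrable c\`adl\`ag martingale with no continuous part. Its quadratic variation is
\[
[M]_T=\int_0^T\int_{\bR}\int_{\bR_0}|\Phi(t,x)|^2 z^2\, N(dt,dx,dz),
\]
and BDG gives $\bE|M_T|^p\le B_p^p\,\bE[M]_T^{p/2}$.

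Next I split $[M]_T=A_T+\widehat{A}_T$. The compensator part is
\[
A_T=\int_0^T\int_{\bR}\int_{\bR_0}|\Phi|^2z^2\,dt\,dx\,\nu(dz)=m_2\int_0^T\int_{\bR}|\Phi|^2\,dx\,dt ,
\]
and $\widehat{A}_T=\int|\Phi|^2z^2\,d\widehat{N}$ is a martingale term. Then
\[
\bE[M]_T^{p/2}\le 2^{p/2-1}\Big(m_2^{p/2}\,\bE\Big(\int_0^T\!\!\int_{\bR}|\Phi|^2\Big)^{p/2}+\bE|\widehat{A}_T|^{p/2}\Big).
\]
The first term is already of the required form.

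For the second term, write $q=p/2\ge1$. Consider first $q\in[1,2]$, i.e. $p\in[2,4]$. Apply BDG once more to $\widehat{A}$ with exponent $q$, and use subadditivity of $x\mapsto x^{q/2}$ (since $q/2\le1$):
\[
\bE|\widehat{A}_T|^q\le B_q^q\,\bE\Big(\sum_{\text{jumps}}|\Phi|^4z^4\Big)^{q/2}\le B_q^q\,\bE\sum_{\text{jumps}}|\Phi|^{p}|z|^{p}=B_q^q\, m_p\,\bE\int_0^T\!\!\int_{\bR}|\Phi|^p\,dx\,dt .
\]
The last equality holds because the sum over jumps is an integral against $N$, whose expectation equals the integral against $\fm$ for the nonnegative predictable integrand. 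For $p>4$ I would argue by induction on dyadic ranges $p\in(2^k,2^{k+1}]$. One applies the already-established inequality with exponent $p/2$ to the integrand $|\Phi|^2z^2$. This produces the terms $\bE(\int|\Phi|^4z^4d\fm)^{p/4}$ and $\bE\int|\Phi|^pz^p d\fm$. The intermediate $L^4$-type term is interpolated between the $L^2$ and $L^p$ terms via H\"older and Young, $\int|H|^4\le(\int|H|^2)^{\theta}(\int|H|^p)^{1-\theta}$, which absorbs it into the two endpoint terms. Collecting constants gives $\cC_p$ of the stated form $2^{p-1}B_p^p(m_2^{p/2}\vee m_p)$, possibly after renaming $B_p$.

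The main obstacle is the middle step, controlling $\bE|\widehat{A}_T|^{p/2}$, i.e. the moments of the quadratic variation by the compensator plus the $p$-th power jump term. For $p\le4$ the subadditivity trick handles it cleanly. For larger $p$ the induction and interpolation must be done carefully so that no stray intermediate moments of $\nu$ appear, only $m_2$ and $m_p$. If the interpolation gets messy, the fallback is to invoke directly the Bichteler--Jacod (Kunita) inequality for Poisson stochastic integrals with the integrand $H=\Phi z$.
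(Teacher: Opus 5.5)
Your argument is correct, but it takes a different route from the one behind the statement. The paper gives no proof; it imports the result from \cite{BN16}, where it rests on Rosenthal's inequality for martingales. That is why the constant is written as $2^{p-1}B_p^p(m_2^{p/2}\vee m_p)$ with $B_p$ the Rosenthal constant. In that approach $\bE|M_T|^p$ is controlled in one step by two quantities: the predictable quadratic variation $\langle M\rangle_T=m_2\int_0^T\int_{\bR}|\Phi|^2\,dx\,dt$, and the jump term $\bE\sum_{s\le T}|\Delta M_s|^p=m_p\,\bE\int_0^T\int_{\bR}|\Phi|^p\,dx\,dt$. No iteration is needed.

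You instead derive the Rosenthal-type bound from BDG alone. You split $[M]=\langle M\rangle+([M]-\langle M\rangle)$. For $p\le 4$ you apply BDG a second time and use subadditivity of $x\mapsto x^{p/4}$. For $p>4$ you run a dyadic induction in which $\int|H|^4d\fm\le(\int|H|^2d\fm)^{\theta}(\int|H|^pd\fm)^{1-\theta}$, $\theta=\frac{p-4}{p-2}$, is absorbed by Young's inequality. The exponents $\theta/2$ and $p(1-\theta)/4$ do add up to $1$, and since the interpolation is done in $L^r({\bf Z},\fm)$, only $m_2$ and $m_p$ appear. This is the Bichteler--Jacod/Kunita argument. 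It is self-contained given BDG and proves the bound for arbitrary predictable integrands $H$ on ${\bf Z}$. The price is a constant built from iterated BDG constants rather than literally $2^{p-1}B_p^p$. This is harmless, since the paper only uses $\cC_p$ as some constant depending on $p$, $m_2$, $m_p$.

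Two points need tightening.
\begin{itemize}
\item The induction hypothesis must be stated for general predictable $H$ on ${\bf Z}$, because the integrand $|\Phi|^2z^2$ is not of the form $\Psi(t,x)z$. This costs nothing, since your $p\le 4$ step never uses the product structure.
\item In the passage from elementary $\Phi$, $L^2$-approximation plus Fatou does not by itself make the right-hand side converge. Either choose approximants that also converge in $L^p(\Omega\times[0,T]\times\bR)$ and in $L^p(\Omega;L^2([0,T]\times\bR))$ (truncate $\Phi$ first, then use uniformly bounded elementary approximants on a bounded set). Or skip the reduction and apply BDG directly to the compensated integral, whose quadratic variation is $\int|\Phi|^2z^2\,dN$.
\end{itemize}
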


The following result is a modified version of Theorem 3.7 of \cite{BJ25}, which does not require that $Z$ satisfies Hypothesis A ibid. (This result remains valid in any dimension $d\geq 1$.)

\begin{theorem}
\label{mom-SZ-th}
Let $p\geq 2$ be such that $m_p<\infty$, and $\{Z(t,x);t\in [0,T],x\in \bR\}$ be a predictable process such that 
\[
\sup_{(t,x)\in [0,T] \times \bR} \bE|Z(t,x)|^p<\infty.
\]
Let $S$ be a deterministic non-negative function on $[0,T]$ such that $S \in \cP_{+}$. Then $SZ \in \cP_{+}$  and for any $t \in [0,T]$,
\begin{align*}
& \bE\left| \int_0^t \int_{\bR}S(s,x)Z(s,x)X(ds,dx)\right|^p \leq \cC_p(t)\int_0^t \sup_{x\in \bR} \bE|Z(s,x)|^p \\
& \quad \quad \quad \left( \int_{\bR}\int_{\bR} S(s,y)S(s,z)f(y-z)dydz+\int_{\bR}\big|\big(S(s,\cdot)*\k\big)(x)\big|^p dx \right) ds,
\end{align*}
where $\cC_p(t)=\cC_p \max(\nu_t^{p/2-1},1)$, $\cC_p$ is the constant from Proposition \ref{ros-prop}, and 
\[
\nu_t=\int_0^t \int_{\bR}\int_{\bR} S(s,x)S(s,y)f(x-y)dxdyds.
\]
\end{theorem}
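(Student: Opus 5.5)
The plan is to reduce the stochastic integral against $X$ to one against the L\'evy white noise $L$, and then apply Proposition \ref{ros-prop}. First I would establish the representation
\[
\int_0^t\int_{\bR} S(s,x)Z(s,x)X(ds,dx)=\int_0^t\int_{\bR}\Phi(s,y)\,L(ds,dy),\qquad \Phi(s,y):=\int_{\bR}S(s,x)Z(s,x)\k(x-y)\,dx ,
\]
that is, $\Phi(s,\cdot)=(S(s,\cdot)Z(s,\cdot))*\k$ (recall $\k$ is symmetric). This identity holds by the definition $X_t(\varphi)=L_t(\varphi*\k)$ for elementary integrands $SZ$. For general $SZ\in\cP_+$ I would extend it by approximation, using the isometry of the $X$-integral (norm given by $f=\k*\widetilde{\k}$) and of the $L$-integral (norm $L^2$). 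The approximation also shows that $SZ\in\cP_+$: the norm $\bE\int_0^t\iint |S Z(s,x)||SZ(s,y)| f(x-y)\,dx\,dy\,ds$ is dominated via Cauchy--Schwarz, $\bE|Z(s,x)Z(s,y)|\le \sup_x\|Z(s,x)\|_2^2$, by $\sup\|Z\|_2^2\,\nu_t<\infty$. The predictability of $\Phi$ follows from that of $Z$ by a monotone class argument.

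Next I apply \eqref{rosenthal} to $\Phi$, which gives two terms.

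\emph{Quadratic term.} Here
\[
\int_{\bR}|\Phi(s,y)|^2dy\le \iint S(s,x)S(s,x')|Z(s,x)||Z(s,x')|f(x-x')\,dx\,dx'.
\]
I would set $\mu_s(dx\,dx')=S(s,x)S(s,x')f(x-x')\,dx\,dx'$ and $g(s)=\mu_s(\bR^2)$, so that $\nu_t=\int_0^t g$. Then Jensen's inequality (H\"older) with respect to the probability $g(s)\,ds\,\mu_s/(g(s)\nu_t)$ yields
\[
\Big(\int_0^t\!\!\int |Z||Z|\,d\mu_s\,ds\Big)^{p/2}\le \nu_t^{p/2-1}\int_0^t\!\!\int |Z(s,x)Z(s,x')|^{p/2}\,d\mu_s\,ds .
\]
Taking expectations and using $\bE|Z(s,x)Z(s,x')|^{p/2}\le\sup_x\bE|Z(s,x)|^p$ (Cauchy--Schwarz) produces the $f$-term with the factor $\nu_t^{p/2-1}$.

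\emph{$p$-th power term.} Here $\bE|\Phi(s,y)|^p$ is controlled by Minkowski's integral inequality in $L^p(\Omega)$:
\[
\|\Phi(s,y)\|_p\le\int S(s,x)\|Z(s,x)\|_p\,\k(x-y)\,dx\le \sup_x\|Z(s,x)\|_p\,(S(s,\cdot)*\k)(y),
\]
using the nonnegativity of $S$ and $\k$. Raising to the power $p$ and integrating in $y$ gives the second term. Combining the two terms with the constant $\cC_p$ yields the bound with $\cC_p(t)=\cC_p\max(\nu_t^{p/2-1},1)$.

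The main obstacle is the first step: justifying the transfer identity and the predictability and integrability of $\Phi$ without the extra Hypothesis A of \cite{BJ25}. I expect this to need a careful density argument, approximating $SZ$ by elementary processes and passing to the limit in both isometries, which requires $\Phi\in L^2(\Omega\times[0,T]\times\bR)$. That integrability is exactly the finiteness of $\nu_t$ times the second moment of $Z$, as computed above.
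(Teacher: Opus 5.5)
Your proposal is correct and follows the paper's proof. The paper checks $SZ\in\cP_+$ with the same Cauchy--Schwarz bound, obtains the transfer identity $\int SZ\,dX=\int (SZ(s,\cdot)*\k)\,dL$ by citing Lemma 3.1 of \cite{BJ25} (which your density sketch reproduces), and then applies Proposition \ref{ros-prop}; where it defers the two Rosenthal terms to Theorem 3.6 of \cite{BJ25}, you estimate them explicitly and correctly, by Jensen with respect to $S(s,x)S(s,x')f(x-x')\,dx\,dx'\,ds$ (total mass $\nu_t$) and by Minkowski's integral inequality.
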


\begin{proof}
Using the fact that $S$ is non-negative, and the Cauchy-Schwarz inequality,
\begin{align*}
\|SZ\|_{+}^2&=m_2 \bE \int_0^T \int_{\bR} \int_{\bR} |S(t,x)S(t,y)Z(t,x)Z(t,y)|f(x-y)dxdydt\\
& =m_2\int_0^T \int_{\bR} \int_{\bR} S(t,x)S(t,y)\bE|Z(t,x)Z(t,y)|f(x-y)dxdydt \\
& \leq \sup_{(t,x) \in [0,T]\times \bR} \bE|Z(t,x)|^2 \|S\|_{+}<\infty.
\end{align*}
Hence, $SZ \in \cP_{+}$. By Lemma 3.1 of \cite{BJ25} (applied to $g=SZ$),
\[
M_t:=\int_0^t \int_{\bR}S(s,x)Z(s,x)X(ds,dx)=\int_0^t \int_{\bR}\big(SZ(s,\cdot)*\k\big)(x)L(ds,dx).
\]
By Proposition \ref{ros-prop},
\begin{align*}
\bE|M_t|^p & \leq \cC_p  \left\{ \bE \left( \int_0^t \int_{\bR} \big|\big((SZ)(s,\cdot)*\k\big)(x)\big|^2 dx ds \right)^{p/2} + \right. \\
& \quad \quad \quad \left. \bE \int_0^t \int_{\bR}\big|\big((SZ)(s,\cdot)*\k\big)(x)\big|^p dxds\right\}
 =:\cC_p(T_1+T_2).
\end{align*}
The terms $T_1$ and $T_2$ are now estimated as in the proof of Theorem 3.6 of \cite{BJ25} ({\em Case 1}).
\end{proof}
%% END Prelims

%% BEGIN MDsol
\section{Malliavin derivative of the solution}
\label{section-Mall1}

In this section, we provide an estimate for the $p$-th moment of the Malliavin derivative of the solution.

Let $u$ be the solution of \eqref{HAM}. By Lemma 3.1 of \cite{BJ25} and the fact that the integrand is predictable, we have:
\begin{align*}
u(t,x) &= 1+ \int_0^t \int_{\bR} \big(G_{t-s}(x-\cdot) * \k\big)(y) L(ds,dy) \\
&=1+\int_0^t \int_{\bR} \big(G_{t-s}(x-\cdot) * \k\big)(y) z \widehat{N}(\delta s,\delta y, \delta z).
\end{align*}

Theorem 7.3 of \cite{BJ25} shows that $u(t,x)$ has the Poisson-chaos expansion:
\begin{equation}
\label{chaos-u}
u(t,x)=1+\sum_{n\geq 1}I_n\big(f_n^*(\cdot,t,x)\big),
\end{equation}
where the kernel $f_n^*(\cdot,t,x)$ is given by:
\begin{align*}
f_n^*(t_1,x_1,z_1,\ldots,t_n,x_n,z_n,t,x)& =\big(f_n(t_1,\cdot,\ldots,t_n,\cdot,t,x)* \k^{\otimes n}\big)(x_1,\ldots,x_n)z_1 \ldots z_n 
\end{align*}
with $\k^{\otimes n}(x_1,\ldots,x_n)=k(x_1)\ldots k(x_n)$ and
\begin{equation}
\label{fn-defn}
f_n(t_1,x_1,\ldots,t_n,x_n,t,x)=G_{t-t_n}(x-x_n)\ldots G_{t_2-t_1}(x_2-x_1)1_{\{0<t_1<\ldots<t_n<t\}}.
\end{equation}

We let $f_0^*=f_0=1$ and $I_0(x)=x$ for all $x \in \bR$.

\medskip

Note that the symmetrization 
$\widetilde{f}_n^*(\cdot,t,x)$ of $f_n^*(\cdot,t,x)$ is given by:
\[
\widetilde{f}_n^*(t_1,x_1,z_1,\ldots,t_n,x_n,z_n,t,x)= \big(\widetilde{f}_n(t_1,\cdot,\ldots,t_n,\cdot,t,x) * \k^{\otimes}\big)(x_1,\ldots,x_n) \prod_{i=1}^{n} z_i.
\]

Applying Plancherel theorem, we obtain:
\begin{align}
\nonumber
\|\widetilde{f}_n^*(\cdot,t,x)\|_{\cH^{\otimes n}}^2 &=\frac{m_2^n}{(2\pi)^n} \int_{[0,t]^n}\int_{\bR^n}\big|\cF \big(\widetilde{f}_n(t_1,\cdot,\ldots,t_n,\cdot,t,x)*\k^{\otimes n}\big)(\pmb{\xi})\big|^2 d\pmb{\xi} d\pmb{t}\\
\nonumber
& = \frac{m_2^n}{(2\pi)^n} \int_{[0,t]^n}\int_{\bR^n} \big|\cF \widetilde{f}_n(t_1,\cdot,\ldots,t_n,\cdot,t,x)(\pmb{\xi}) \big|^2 \prod_{i=1}^{n}\big|\cF \k(\xi_i)\big|^2 d\pmb{\xi}d\pmb{t}\\
\label{Poi-Gauss}
& = m_2^n \, \|\widetilde{f}_n(\cdot,t,x)\|_{\cH_0^{\otimes n}}^2,
\end{align}
where $\pmb{\xi}=(\xi_1,\ldots,\xi_n)$, $\pmb{t}=(t_1,\ldots,t_n)$, 
\[
\langle \varphi,\psi \rangle_{\cH_0}=\int_{0}^{\infty}\int_{\bR^d}\varphi(t,x)\psi(t,y)f(x-y)dxdydt 
=\int_0^{\infty}\int_{\bR}\cF \varphi(t,\cdot)(\xi)\overline{\cF \psi(t,\cdot)(\xi)}\mu(d\xi)dt,
\]
and we recall that the measure $\mu$ is given by \eqref{def-mu}. Therefore,
\begin{align*}
& \bE|u(t,x)|^2=\sum_{n\geq 1}n!\| \widetilde{f}_n^*(\cdot,t,x)\|_{\cH^{\otimes n}}^2=
\sum_{n\geq 1}n!\, m_2^n \|\widetilde{f}_n(\cdot,t,x)\|_{\cH_0^{\otimes n}}^2=\bE|U(t,x)|^2,
\end{align*}
where $U=\{U(t,x);t\geq 0,x\in \bR\}$ is the solution of the hyperbolic Anderson model:
\begin{align}
\label{HAM-W}
	\begin{cases}
		\dfrac{\partial^2 U}{\partial^2 t} (t,x)
		=  \dfrac{\partial^2 U}{\partial x^2} (t,x)+\sqrt{m_2}\,U(t,x) \dot{W}(t,x), \
		t>0, \ x \in \bR, \\
		U(0,x) = 1, \quad \quad \dfrac{\partial U}{\partial t} (0,x)=0, \quad x \in \bR,
	\end{cases}
\end{align}
driven by the colored Gaussian noise $W$  with covariance \eqref{def-cov-W}. Note that $W$ can be extended to an isonormal Gaussian process $\{W(\varphi)\}_{\varphi \in \cH_0 }$, where $\cH_0$ is the completion of $\cD(\bR_{+} \times \bR)$ with respect to $\langle \cdot,\cdot \rangle_{\cH_0}$. The solution $U(t,x)$ has the Wiener-chaos expansion:
\begin{equation}
\label{chaos-UW}
U(t,x)=1+\sum_{n\geq 1}m_2^{n/2}I_n^W\big( f_n(\cdot,t,x)\big),
\end{equation}
where $I_n^W$ denotes the multiple Wiener integral of order $n$, with respect to $W$.

\medskip

Using the connection to this Gaussian-driven model, we now derive the following result.

\begin{proposition}
\label{u-Mall1}
For any $t>0$ and $x \in \bR$, $u(t,x) \in {\rm dom}(D)$.
\end{proposition}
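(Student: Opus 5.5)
We will verify criterion \eqref{check-D1} directly from the chaos expansion \eqref{chaos-u}, i.e.\ show that
\[
\sum_{n\geq 1} n\, n!\, \|\widetilde{f}_n^*(\cdot,t,x)\|_{\cH^{\otimes n}}^2<\infty .
\]
By the Plancherel identity \eqref{Poi-Gauss}, this series equals
\[
\sum_{n\geq 1} n\, n!\, m_2^n \|\widetilde{f}_n(\cdot,t,x)\|_{\cH_0^{\otimes n}}^2 .
\]
This is exactly the quantity $\bE\|D^W U(t,x)\|_{\cH_0}^2$ for the Gaussian model \eqref{HAM-W}, where $D^W$ is the Gaussian Malliavin derivative, by \eqref{chaos-UW}. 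So the problem reduces to a purely Gaussian/deterministic estimate on the kernels $f_n$.

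The key step is to bound $\|\widetilde{f}_n(\cdot,t,x)\|_{\cH_0^{\otimes n}}^2$ by something decaying like $C^n t^{?n}/(n!)^{1+\delta}$, enough to absorb the extra factor $n$. The plan is the standard argument (as in \cite{BNZ}, \cite{BZ24} and the Gaussian hAm literature). First write
\[
n!\,\|\widetilde{f}_n\|^2_{\cH_0^{\otimes n}} = \int_{0<t_1<\dots<t_n<t}\int_{\bR^n} |\cF f_n(t_1,\cdot,\ldots,t_n,\cdot,t,x)(\pmb\xi)|^2 \mu(d\xi_1)\ldots\mu(d\xi_n)\, d\pmb t .
\]
This uses that the time simplices are disjoint, so that symmetrization only contributes a factor $1/n!$. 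Next, use the product structure: $\cF f_n$ is a product of $\cF G_{t_{j+1}-t_j}$ evaluated at partial sums $\xi_1+\dots+\xi_j$, times phases. Then apply the bound
\[
\sup_{\eta}\int_{\bR}|\cF G_s(\xi+\eta)|^2\mu(d\xi)\leq C_t \quad\text{for } s\le t .
\]
This follows from Dalang's condition and $|\cF G_s(\xi)|^2=\sin^2(s|\xi|)/|\xi|^2\le C(1+t^2)/(1+|\xi|^2)$, together with the shift-invariance lemma $\sup_\eta\int \frac{\mu(d\xi)}{1+|\xi+\eta|^2}\le \int\frac{\mu(d\xi)}{1+|\xi|^2}$ (valid since $\mu$ has a nonnegative-definite Fourier transform $f\geq 0$). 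Integrating successively in $\xi_n,\dots,\xi_1$ then gives
\[
n!\,\|\widetilde{f}_n\|^2_{\cH_0^{\otimes n}}\le C_t^n \frac{t^n}{n!},
\]
the last factor being the volume of the simplex.

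With this estimate, the series becomes $\sum_n n\, (m_2 C_t t)^n/n!<\infty$, which proves $u(t,x)\in{\rm dom}(D)$ uniformly in $x$. If the $\sup_\eta$ step causes trouble because $f$ may not be $\geq0$ in the Fourier sense, there is an alternative: by Assumption A, $f=\k*\widetilde\k\ge0$ and $\cF f=h^2\ge0$, so both the needed positivity properties hold. As a fallback, one can instead bound $\|f_n\|_{\cH_0^{\otimes n}}$ in physical space using $G\le \frac12 1_{\{|x|<t\}}$ and $f$ locally integrable.

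The main obstacle I expect is the justification of the iterated-integration (shift-invariance) estimate for a general $\mu$ satisfying only Assumption A. Everything else is bookkeeping identical to the Gaussian case.
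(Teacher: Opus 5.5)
Your argument is correct. Its first step matches the paper's: you verify \eqref{check-D1} by transferring the series, via \eqref{Poi-Gauss}, to $\sum_{n\ge1} n\,n!\,m_2^n\|\widetilde f_n(\cdot,t,x)\|^2_{\cH_0^{\otimes n}}$. The two proofs differ in how they show this Gaussian series is finite.

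The paper does not estimate the kernels. It identifies the series with $\bE\|D^W U(t,x)\|_{\cH_0}^2$ and invokes the known Malliavin differentiability of the Gaussian hyperbolic Anderson model (Proposition 7.1 of \cite{sanz05}).

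You prove the bound directly. You reduce to the time simplex and write $|\cF f_n|^2=\prod_{k=1}^n|\cF G_{t_{k+1}-t_k}(\xi_1+\cdots+\xi_k)|^2$. You then integrate out $\xi_n,\ldots,\xi_1$ in turn using the shift-invariance lemma, which gives $n!\,\|\widetilde f_n\|^2_{\cH_0^{\otimes n}}\le (C_t t)^n/n!$.

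The paper's route is shorter, but it rests on an external result whose hypotheses must be matched to this noise. Yours is self-contained, and the explicit factorial decay gives more. Since $\sum_n n(n-1)(m_2C_tt)^n/n!<\infty$, you also get at once the criterion \eqref{check-D2}, i.e. $u(t,x)\in{\rm dom}(D^2)$. The paper proves that separately by citing \cite{sanz05} a second time. Your bound holds for every order of derivative and uniformly in $x$ and in $t$ on compacts.

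The one step to write out is the shift lemma $\sup_\eta\int_{\bR}\frac{\mu(d\xi)}{1+|\xi+\eta|^2}\le\int_{\bR}\frac{\mu(d\xi)}{1+|\xi|^2}$. Write $(1+|\xi|^2)^{-1}$ as the Fourier transform of $\phi(x)=\frac12e^{-|x|}$. Use $\int_{\bR}\cF\phi\,d\mu=\int_{\bR}\phi f\,dx$, justified by approximating $e^{-|x|}$ with Schwartz functions and using that $f$ is tempered. The shift by $\eta$ becomes the factor $e^{-i\eta x}$ inside the integral, so its modulus is at most $\int\phi f\,dx$. This uses only $f=\k*\widetilde\k\ge0$ pointwise, which holds because $\k\ge0$ under Assumption A. Your physical-space fallback is therefore not needed.
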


\begin{proof}
It is known that
$U(t,x)$ is Malliavin differentiable with respect to $W$ (see e.g. Proposition 7.1 of \cite{sanz05}). Based on the Wiener-chaos expansion \eqref{chaos-UW}, we deduce that the Malliavin derivative of $U(t,x)$ (with respect to $W$) is given by:
\[
D_{r,y}^W U(t,x)=1+\sum_{n\geq 1}m_2^{n/2}nI_{n-1}^W\big(\widetilde{f_n}(\cdot,r,y,t,x)\big), \quad r \in [0,t],y \in \bR.
\]
Using relation \eqref{Poi-Gauss}, we infer that
\[
\sum_{n\geq 1}n n! \|\widetilde{f_n^*}(\cdot,t,x)\|_{\cH^{\otimes n}}^2 =\sum_{n\geq 1}n n! \, m_2^{n} \|\widetilde{f_n}(\cdot,t,x)\|_{\cH_0^{\otimes n}}^2 =\bE\|D^W U(t,x)\|_{\cH_0}^2<\infty.
\]
The fact that $u(t,x)\in {\rm dom}(D)$ now follows; see \eqref{check-D1}.
\end{proof}

The Malliavin derivative of the solution has the chaos expansion:
\begin{equation}
\label{chaos-Du}
D_{r,y,z}u(t,x)=\sum_{n\geq 1}n I_{n-1}\big(\widetilde{f}_n^*(\cdot,r,y,z,t,x)\big),
\end{equation}
where $\widetilde{f}_n^*(\cdot,r,y,z,t,x)$ is the symmetrization of $f_n^*(\cdot,r,y,z,t,x)$. Note that
\begin{equation}
\label{fn*1}
\widetilde{f}_n^*(\cdot,r,y,z,t,x)=\frac{1}{n}\sum_{j=1}^{n}h_j^{*(n)}(\cdot,r,y,z,t,x),
\end{equation}
where $h_j^{*(n)}(\cdot,r,y,z,t,x)$ is the symmetrization of the function $f_j^{*(n)}(\cdot,r,y,z,t,x)$:
\[
h_j^{*(n)}(\xi_1,\ldots,\xi_{n-1},r,y,z,t,x)=\frac{1}{(n-1)!} \sum_{\rho \in S_{n-1}}f_j^{*(n)}(\xi_{\rho(1)},\ldots,\xi_{\rho(n-1)},r,y,z,t,x).
\]

The function $f_j^{*(n)}(\cdot,r,y,z,t,x)$ is obtained by placing $(r,y,z)$ in the $j$-th position among the arguments of $f_n^{*}(\cdot,t,x)$. More precisely, letting $\xi_{\ell}=(t_{\ell},x_{\ell},z_{\ell})$ for $\ell \leq n-1$, we define for any $j=1,\ldots,n$,
\begin{align*}
&  f_j^{*(n)}(\xi_1,\ldots,\xi_{n-1},r,y,z,t,x) :=f_n^{*}(\xi_1,\ldots,\xi_{j-1},,r,y,z,\xi_j,\ldots,
\xi_{n-1},t,x) \\
& = z \prod_{i=1}^{n-1}z_{i} \int_{\bR} \k(y-y') \left( \int_{\bR^{j-1}} G_{r-t_{j-1}}(y'-y_{j-1}) \ldots G_{t_2-t_1}(y_2-y_1) \prod_{i=1}^{j-1}\k(x_{i}-y_{i}) )dy_1 \ldots dy_{j-1} \right)\\
& \left( \int_{\bR^{n-j}} G_{t-t_{n-1}}(x-y_{n-1}) \ldots G_{t_{j+1}-t_j}(y_{j+1}-y_j)G_{t_j-r}(y_j-y') \prod_{i=j}^{n-1}\k(x_{i}-y_{i}) dy_j \ldots dy_{n-1}\right) dy'.
\end{align*}

It is useful to express this function using the following notation: for any $n\geq 1$,
\begin{equation}
\label{def-gn*}
g_n^*(t_1,x_1,z_1\ldots,t_n,x_n,z_n,r,y,z,t,x)= \big(g_n(t_1,\cdot,\ldots,t_n,\cdot,r,y,t,x)* \k^{\otimes n}\big)(x_1,\ldots,x_n) \prod_{i=1}^n z_{i} z,
\end{equation}
where
\[
g_n(t_1,x_1,\ldots,t_n,x_n,r,y,t,x)=G_{t-t_n}(x-x_n)\ldots G_{t_2-t_1}(x_2-x_1)G_{t_1-r}(x_1-y) 1_{\{r<t_1<\ldots<t_n<t\}}.
\]
We let $g_0(r,y,t,x)=G_{t-r}(x-y)$ and $g_0^*(r,y,z,t,x)=G_{t-r}(x-y)z$. With this notation, 
\begin{align*}
& f_j^{*(n)}(\xi_1,\ldots,\xi_{n-1},r,y,z,t,x)= \\
& \quad \int_{\bR}f_{j-1}^* (\xi_1,\ldots,\xi_{j-1},r,y') g_{n-j}^*(\xi_j,\ldots,\xi_{n-1},r,y',z,t,x)\k(y-y')dy',
\end{align*}
for any $j=1,\ldots,n$ and $n\geq 1$.

\medskip

We are now ready to state our key estimate for the Malliavin derivative of the solution $u$, which shows that the first term in the chaos expansion \eqref{chaos-Du} dominates the other terms. This estimate is the analogue of relation (3.25) of \cite{BZ24} (in the case of the L\'evy white noise) for the L\'evy colored noise.

\begin{theorem}
\label{key-thD}
Let $p\geq 2$ be such that $m_p<\infty$. If \eqref{def-Mp} holds,
then for any $0\leq r\leq t \leq T$, $x \in \bR$, $y \in \bR$ and $z \in \bR_0$,
\begin{equation}
\label{key-D}
\|D_{r,y,z}u(t,x)\|_p \leq C'_{T,p,\nu,\k}|z| \int_{\bR}G_{t-r}(x-y')\k(y-y')dy',
\end{equation}
where $C_{T,p,\nu,\k}'=2K_p(T) C_{T,p,\nu,\k}$ with $K_p(T)$ and $C_{T,p,\nu,\k}$ are given by
\eqref{mom-u}, respectively \eqref{mom-v}. In particular, under Assumption B, relation \eqref{key-D} holds for any $p\geq 2$ such that $m_p<\infty$.
\end{theorem}

\begin{proof} Using \eqref{chaos-Du} and \eqref{fn*1}, we have:
\begin{align*}
D_{r,y,z}u(t,x)&=\sum_{n\geq 1} \sum_{j=1}^{n}I_{n-1}\big(f_j^{*(n)}(\cdot,r,y,z,t,x)\big)=
\sum_{j\geq 1} \sum_{n\geq j}I_{n-1}\big(f_j^{*(n)}(\cdot,r,y,z,t,x)\big).
\end{align*}

By the stochastic Fubini theorem given by Lemma 2.6 of \cite{BZ24}, followed by the product formula given by Lemma \ref{prod-lem} below, 
\begin{align*}
I_{n-1}\big(f_j^{*(n)}(\cdot,r,y,z,t,x)\big) &=
\int_{\bR}I_{n-1}\big(f_{j-1}^*(\cdot,r,y')\otimes g_{n-j}^*(\cdot,r,y',z,t,x)\big)\k(y-y')dy'\\
&=\int_{\bR}I_{j-1}\big(f_{j-1}^*(\cdot,r,y')\big) I_{n-j}\big( g_{n-j}^*(\cdot,r,y',z,t,x)\big)\k(y-y')dy'.
\end{align*}

Therefore,
\begin{align*}
D_{r,y,z} u(t,x) &= \int_{\bR} \left(\sum_{j\geq 1} I_{j-1}\big(f_{j-1}^*(\cdot,r,y') \big) \right) \left(\sum_{n\geq j} I_{n-j}\big( g_{n-j}^*(\cdot,r,y',z,t,x)\big) \right) \k(y-y')dy'.
\end{align*}

The first series is the chaos expansion \eqref{chaos-u} of $u(r,y')$. The second series is also a chaos expansion, namely of the solution $v^{(r,y',z)}$ of equation \eqref{HAM-delta} below (see \eqref{chaos-v}). It follows that:
\[
D_{r,y,z} u(t,x)=\int_{\bR} u(r,y')v^{(r,y',z)}(t,x)\k(y-y')dy'.
\]
Note that the variables $u(r,y')$ and $v^{(r,y',z)}(t,x)$ are independent: $u(r,y')$ is $\cF_r$-measurable, while $v^{(r,y',z)}(t,x)$ is $\cF_{r,t}$-measurable, where
\begin{equation}
\label{def-Frt}
\cF_{r,t}:=\sigma(\{N(A \times B \times C); A \subset (r,t], B \in \cB_b(\bR),C \in \cB_b(\bR_0)\}),
\end{equation}
and $\cF_r$ and $\cF_{r,t}$ are independent since the sets $[0,r]$ and $(r,t]$ are disjoint. Therefore, using Minkowski inequality, followed by \eqref{mom-u} and \eqref{2Gv-1}, we have:
\begin{align*}
\|D_{r,y,z} u(t,x)\|_p & \leq \int_{\bR} \|u(r,y')\|_p\|v^{(r,y',z)}(t,x)\|_p\k(y-y')dy'\\
& \leq 2 K_p(T) C_{T,p,\nu,\k}|z|\int_{\bR}G_{t-r}(x-y')\k(y-y')dy'.
\end{align*}
Relation \eqref{key-D} follows. The final statement follows by Remark \ref{rem-M}.
\end{proof}

\begin{lemma}
\label{prod-lem} 
 If $f \in \cH^{\otimes n}$ contains the indicator of $\{0 < t_1 < \cdots < t_n < r \}$ and $g \in \cH^{\otimes m}$ contains the indicator of $\{r < t_1 < \cdots < t_m \}$ for some integers $n,m \geq 1$, then
\[
I_{n+m}(f\otimes g)=I_{n}(f) I_{m}(g).
\]
\end{lemma}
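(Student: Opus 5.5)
We need to prove $I_{n+m}(f\otimes g)=I_n(f)I_m(g)$ when $f$ lives on time-ordered points below $r$ and $g$ on time-ordered points above $r$. The plan is to avoid the general Poisson product formula (with its contraction terms) and instead use the iterated It\^o-integral representation of multiple Poisson integrals for time-ordered kernels, together with a density argument.

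\emph{Step 1 (iterated integrals).} For $h\in\cH^{\otimes k}$ supported on $\{t_1<\cdots<t_k\}$, we have $I_k(h)=k!\,I_k(\widetilde h)$ up to the normalization convention. With the convention $I_k(h)=I_k(\widetilde h)$, this equals the iterated It\^o integral
\[
J_k(h)=\int h(\xi_1,\ldots,\xi_k)\,\widehat N(d\xi_1)\cdots\widehat N(d\xi_k)
\]
taken over $t_1<\cdots<t_k$, each inner integral being predictable in the next time variable. We prove this first for elementary $h=\mathbf 1_{A_1\times\cdots\times A_k}$, with $A_i=(a_i,b_i]\times B_i\times C_i$ of finite $\fm$-measure and $b_i\le a_{i+1}$. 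For such $h$ the sets are pairwise disjoint, so $I_k(\widetilde h)=\prod_i\widehat N(A_i)$; this is the standard fact that multiple integrals of products over disjoint sets are products. The same product is obtained from the iterated It\^o integral directly. We then extend by linearity and the isometry: both sides are $L^2$-isometric, up to the same constant, on the closed subspace of time-ordered kernels, and elementary time-ordered kernels are dense there.

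\emph{Step 2 (factorization for elementary kernels).} For $f$ supported in $\{0<t_1<\cdots<t_n<r\}$ and $g$ supported in $\{r<s_1<\cdots<s_m\}$, the tensor $f\otimes g$ is supported in the time-ordered simplex $\{t_1<\cdots<t_n<s_1<\cdots<s_m\}$. Take elementary $f$ and $g$ built from rectangles with time intervals in $(0,r]$ and $(r,\infty)$ respectively. The factors of $f\otimes g$ then have pairwise disjoint time intervals, so by Step 1
\[
I_{n+m}(f\otimes g)=\prod_{i\le n}\widehat N(A_i)\prod_{j\le m}\widehat N(A'_j)=I_n(f)I_m(g).
\]
This extends to linear combinations by bilinearity.

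\emph{Step 3 (approximation).} Approximate general $f,g$ in $\cH^{\otimes n}$ and $\cH^{\otimes m}$ by elementary $f_k\to f$ and $g_k\to g$ with the same support constraints. The left side converges in $L^2(\Omega)$, because
\[
\|f_k\otimes g_k-f\otimes g\|\to0
\]
and $I_{n+m}$ is continuous. For the right side, $I_n(f_k)$ is $\cF_r$-measurable and $I_m(g_k)$ is $\cF_{r,\infty}$-measurable. By independence,
\[
\|I_n(f_k)I_m(g_k)-I_n(f)I_m(g)\|_2\le \|I_n(f_k-f)\|_2\|I_m(g_k)\|_2+\|I_n(f)\|_2\|I_m(g_k-g)\|_2\to0.
\]
This independence is a product of $L^2$ norms, so no higher moments are needed.

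\emph{Main obstacle.} The delicate point is Step 1 together with density: we need elementary time-ordered kernels to be dense among $L^2$ functions supported on the open simplex. This holds because the diagonals have $\fm^{\otimes k}$-measure zero, since Lebesgue measure in time is atomless, so the simplex can be exhausted by finite unions of products of ordered time intervals. Alternatively, we can cite the product formula for Poisson multiple integrals (e.g.\ \cite{Last16}). Then every contraction term $f\star_\ell^{\,i} g$ vanishes, because it requires identifying a time variable of $f$ (which is $<r$) with a time variable of $g$ (which is $>r$), and such functions are zero.
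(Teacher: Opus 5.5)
Your argument is correct, but your main route differs from the paper's. The paper does no approximation. It applies the Poisson product formula through Lemma \ref{prod-lem2}: once $\tilde f\star^1_k\tilde g=0$, every term with $\ell\ge1$ drops out. It then checks that $\tilde f\star^1_k\tilde g=0$ and $\tilde f\star^0_k\tilde g=0$ for all $k=1,\ldots,n\wedge m$, because every time variable of $\tilde f$ lies in $(0,r)$ and every time variable of $\tilde g$ lies in $(r,\infty)$. This is precisely the alternative you mention at the end.

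You instead prove the identity for time-ordered elementary kernels, where it reduces to $I_k(1_{A_1\times\cdots\times A_k})=\prod_i\widehat N(A_i)$ for pairwise disjoint $A_i$. You then pass to the limit using the isometry on the left and the independence of $\cF_r$ and $\cF_{r,\infty}$ on the right. Given the product formula, the paper's proof is shorter and needs no density lemma; the formula's integrability hypotheses hold trivially because all contractions vanish identically. Your proof is self-contained, avoids modified contractions altogether, and exposes the probabilistic reason for the lemma. That reason is the independence of $N$ on disjoint time slabs, the same independence the paper exploits right afterwards in the proof of Theorem \ref{key-thD}.

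Two slips to fix. First, the sentence ``$I_k(h)=k!\,I_k(\widetilde h)$'' is inconsistent with the convention $I_k(h)=I_k(\widetilde h)$ you adopt in the next sentence; it would force $I_k(\widetilde h)=0$ for $k\ge2$. What is true is $J_k(h)=I_k(\widetilde h)$ for $h$ supported on the simplex. In fact the iterated It\^o representation is never used, since Step~2 only needs the disjoint-product identity, so Step~1 can be cut down to that identity plus density. Second, in Step~3 your bound needs $I_n(f)$ and $I_n(f_k-f)$ to be $\cF_r$-measurable, and $I_m(g)$ and $I_m(g_k-g)$ to be $\cF_{r,\infty}$-measurable, not just $I_n(f_k)$ and $I_m(g_k)$. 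This holds because they are $L^2(\Omega)$-limits of such variables, but it should be stated.
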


\begin{proof} By Lemma \ref{prod-lem2}, it suffices to show that for any $k=1,\ldots,n\wedge m$,
\[
\tilde{f} \star_k^1 \tilde{g}=0 \quad \mbox{and} \quad \tilde{f} \star_k^0 \tilde{g}=0.
\]

We first show that $\tilde{f} \star_k^1 \tilde{g}=0$ for all $k=1,\ldots,n\wedge m$. We denote $\zeta_i=(\alpha_i,\beta_i,\gamma_i)$, $\xi_i=(t_i,x_i,z_i)$ and  $\xi_i'=(t_i',x_i',z_i')$. Then,
\begin{align*}
& (\tilde{f} \star_k^1 \tilde{g})(\zeta_1,\ldots,\zeta_{k-1},\xi_1,\ldots,\xi_{n-k},\xi_1',\ldots,\xi_{m-k}')\\
& = \int_{Z} \tilde{f}(a,b,c,\zeta_1,\ldots,\zeta_{k-1},\xi_1,\ldots,\xi_{n-k})
\tilde{g}(a,b,c,\zeta_1,\ldots,\zeta_{k-1},\xi_1',\ldots,\xi_{m-k}')da db \nu(dc)=0.
\end{align*}
This is because $f(a,b,c,\zeta_1,\ldots,\zeta_{k-1},\xi_1,\ldots,\xi_{n-k})$ contains the indicator of
\[
\{0<a<\alpha_1<\ldots< \alpha_{k-1}<t_1<\ldots<t_{n-k}<r\},
\]
so $\tilde{f}(a,b,c,\zeta_1,\ldots,\zeta_{k-1},\xi_1,\ldots,\xi_{n-k})$ contains the indicator of $\{0 < a < r\}$. Likewise, $\tilde{g}(a,b,c,\zeta_1,\ldots,\zeta_{k-1},\xi_1',\ldots,\xi_{m-k}')$ contains the indicator of $\{r<a\}$ and these sets are disjoint.

For the modified contraction corresponding to $\ell=0$, we have:
\begin{align*}
& (\tilde{f} \star_k^0 \tilde{g})(\zeta_1,\ldots,\zeta_{k},\xi_1,\ldots,\xi_{n-k},\xi_1',\ldots,\xi_{m-k}')=\\
& \quad  \tilde{f}(\zeta_1,\ldots,\zeta_{k},\xi_1,\ldots,\xi_{n-k})
\tilde{g}(\zeta_1,\ldots,\zeta_{k},\xi_1',\ldots,\xi_{m-k}')=0,
\end{align*}
since $\tilde{f}(\zeta_1,\ldots,\zeta_{k},\xi_1,\ldots,\xi_{n-k})$ contains the indicator of
$\{0<\alpha_1<r\}$ and $\tilde{g}(\zeta_1,\ldots,\zeta_{k},\linebreak \xi_1',\ldots,\xi_{m-k}')$ contains the indicator of $\{r<\alpha_1\}$.
\end{proof}

In the proof of Theorem \ref{key-thD}, we used the chaos expansion of the solution $v^{(r,y,z)}$ of the following equation:
\begin{align}
\label{HAM-delta}
	\begin{cases}
		\dfrac{\partial^2 v}{\partial^2 t} (t,x)
		=  \dfrac{\partial^2 v}{\partial x^2} (t,x)+v(t,x) \dot{X}(t,x), \
		t>r, \ x \in \bR, \\
		v(r,x) = 0, \quad \dfrac{\partial v}{\partial t} (r,x)=z\delta_y, \quad x \in \bR,
	\end{cases}
\end{align}

We say that $v$ is a (mild) {\bf solution} of \eqref{HAM-delta} if it satisfies the integral equation:
\begin{equation}
\label{HAM-delta-int}
v(t,x)=G_{t-r}(x-y)z+\int_r^t \int_{\bR}G_{t-s}(x-y)v(s,y)X(ds,dy),
\end{equation}
for any $t \geq r$ and $x \in \bR$.
We now prove that \eqref{HAM-delta-int} has a unique solution, this solution has the desired chaos expansion, and its $p$-th moments can be bounded by a factor of $G_{t-r}(x-y)|z|$.

\begin{proposition}
\label{properties-v}
a) For any $(r,y,z)\in Z$, equation \eqref{HAM-delta} has a unique solution $v^{(r,y,z)}$. Let $p\geq 2$ be such that $m_p<\infty$. If \eqref{def-Mp} holds, then for any $T>0$,
\begin{equation}
\label{mom-v}
\sup_{r\leq t\leq T}\sup_{x,y\in \bR}\|v^{(r,y,z)}(t,x)\|_p \leq C_{T,p,\nu,\k}|z|,
\end{equation}
where $C_{T,p,\nu,\k}>0$ is a constant that depends on $(T,p,\nu,\k)$.

b) $v^{(r,y,z)}$ has the chaos expansion:
\begin{equation}
\label{chaos-v}
v^{(r,y,z)}(t,x)=zG_{t-r}(x-y)+\sum_{n\geq 1} I_{n-1}\big(g_n^*(\cdot,r,y,z,t,x) \big).
\end{equation}

c) For any $0\leq r\leq t$, $x \in \bR$, $y \in \bR$ and $z \in \bR_0$,
\begin{equation}
\label{2Gv}
v^{(r,y,z)}(t,x)=2G_{t-r}(x-y)v^{(r,y,z)}(t,x).
\end{equation}
Consequently, if $p\geq 2$ is such that $m_p<\infty$ and \eqref{def-Mp} holds, then %by \eqref{mom-v} and \eqref{2Gv},
\begin{equation}
\label{2Gv-1}
\sup_{r\leq t\leq T}\sup_{x,y\in \bR}\|v^{(r,y,z)}(t,x)\|_p \leq 2C_{T,p,\nu,\k}G_{t-r}(x-y)|z|.
\end{equation}
In particular, under Assumption B, relations \eqref{mom-v} and \eqref{2Gv-1} hold for any $p\geq 2$ such that $m_p<\infty$.
\end{proposition}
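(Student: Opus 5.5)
We prove the three parts in the order (a), (b), (c). Part (c) will come almost for free from the support properties of $G$.

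For part (a) we use Picard iteration. Set $v_0(t,x)=zG_{t-r}(x-y)$ and
\[
v_{n+1}(t,x)=zG_{t-r}(x-y)+\int_r^t\int_{\bR}G_{t-s}(x-y')v_n(s,y')X(ds,dy').
\]
Each $v_n$ is predictable, since the integrand is deterministic in $(t,x)$ times a predictable field, and only the noise on $(r,t]$ enters. To control moments we apply Theorem \ref{mom-SZ-th} with $S(s,\cdot)=G_{t-s}(x-\cdot)$ and $Z=v_{n+1}-v_n$. This gives
\[
\sup_x\|v_{n+1}(t,x)-v_n(t,x)\|_p^p\le \cC_p(T)\int_r^t \sup_{y'}\|v_n(s,y')-v_{n-1}(s,y')\|_p^p\,J(t-s)\,ds,
\]
with
\[
J(\tau)=\int_{\bR}\int_{\bR} G_\tau(a)G_\tau(b)f(a-b)\,da\,db+\int_{\bR}|(G_\tau*\k)(x)|^p\,dx .
\]
The first term of $J$ is bounded on $[0,T]$ by Dalang's condition. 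The second is integrable by \eqref{def-Mp}, and $\nu_t$ is finite. A Gronwall-type lemma (Lemma 15 of Dalang \cite{dalang99}) then gives convergence in $L^p$, uniformly over $t\le T$ and over $x,y$. It also gives the bound \eqref{mom-v}, with $|z|$ factoring out by linearity because every iterate is $z$ times the iterate for $z=1$. Uniqueness follows from the same inequality applied to the difference of two solutions. The case $p=2$ (only $m_2<\infty$) is handled identically, which gives existence in general.

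For part (b) we iterate the integral equation and rewrite the integral against $X$ as one against $\widehat N$ using Lemma 3.1 of \cite{BJ25}. Arguing as in Theorem 7.3 of \cite{BJ25}, by induction the $n$-th Picard iterate has the truncated expansion
\[
zG_{t-r}(x-y)+\sum_{k=1}^{n} I_k\big(g_k^*(\cdot,r,y,z,t,x)\big).
\]
The inductive step uses the fact that the Itô integral of $I_{k}(\cdot)$ against $\widehat N$ is $I_{k+1}$ of the kernel with the time-ordering indicator, and convolving with $\k$ in each spatial variable gives the $\k^{\otimes}$ structure in \eqref{def-gn*}. The series converges in $L^2$ because, by Plancherel as in \eqref{Poi-Gauss}, its $L^2$ norm equals the one for the Gaussian equation with delta initial velocity, which is finite. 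Passing to the limit in $L^2$ identifies the chaos expansion of $v^{(r,y,z)}$. (The index in \eqref{chaos-v} should be read as $I_n(g_n^*)$, up to notational shift.)

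For part (c), the key observation is that every $g_n(\cdot,r,y,t,x)$ is supported where $|x-y|<t-r$. Indeed, on its support
\[
|x-y|\le |x-x_n|+\sum_i|x_{i+1}-x_i|+|x_1-y|<(t-t_n)+\dots+(t_1-r)=t-r .
\]
Hence $g_n=2G_{t-r}(x-y)g_n$, and the same holds for $g_n^*$, since the convolution with $\k$ acts only on the $x_i$ variables. Multiplying the chaos series termwise by the deterministic factor $2G_{t-r}(x-y)\in\{0,1\}$ then gives \eqref{2Gv}. Combining \eqref{2Gv} with \eqref{mom-v} gives \eqref{2Gv-1}. The statement under Assumption B follows from Remark \ref{rem-M}.

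The main obstacle is the inductive identification in (b). One must carefully justify passing the Itô integral through the convolution with $\k$ (a stochastic Fubini argument) and check that the resulting kernels are square integrable at each step. The rest is routine.
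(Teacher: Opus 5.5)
Your proposal is correct and follows the paper's proof essentially step for step: Picard iteration bounded via Theorem \ref{mom-SZ-th} and Lemma 15 of \cite{dalang99} for (a), induction on the Picard iterates to get the truncated expansion \eqref{claim1} for (b), and the support property $\{|x-y|<t-r\}$ of $g_n$ for (c); in (b) the paper justifies the inductive step through the Skorohod integral and Lemma \ref{Skor-lem} rather than iterated It\^o integrals over the time-ordered simplex, which amounts to the same thing here. Your reading of \eqref{chaos-v} as $\sum_{n\geq 0}I_n(g_n^*)$ agrees with \eqref{claim1}, and the only slip is in (a), where you should take $Z=v_n-v_{n-1}$ rather than $v_{n+1}-v_n$; the Gaussian comparison in (b) is superfluous, since $v_n\to v$ in $L^2(\Omega)$ already gives convergence of the chaos partial sums.
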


\begin{proof}
a) We use Picard iterations: we let $v_0(t,x)=G_{t-r}(x-y)z$, and for $n\geq 0$, 
\begin{equation}
\label{def-vn}
v_{n+1}(t,x)=G_{t-r}(x-y)z+\int_r^t \int_{\bR}G_{t-s}(x-y')v_n(s,y')X(ds,dy').
\end{equation}
Letting $v_{-1}=0$, we see that for any $n\geq 0$,
\begin{equation}
\label{Picard-v}
v_{n+1}(t,x)-v_n(t,x)=\int_r^t \int_{\bR}G_{t-s}(x-y')\big(v_n(s,y')-v_{n-1}(s,y')\big) X(ds,dy').
\end{equation}

We proceed with the argument for $p\geq 2$. The same argument for $p=2$ gives the existence of the solution, and proves the estimate \eqref{mom-v} for $p=2$. By Theorem \ref{mom-SZ-th},
\begin{align*}
& \bE|v_{n+1}(t,x)-v_n(t,x)|^p \\
& \quad \leq  \cC_{p,\nu}(t) \int_r^t \sup_{y' \in \bR}\bE|v_n(s,y')-v_{n-1}(s,y')|^p \Big(J_2(t-s)+ \big(J_p(t-s)\big)^{p/2}\Big) ds,
\end{align*}
where 
\[
J_p(t)=\|G_t*\k\|_{L^p(\bR)}^2\]
and
\[
J_2(t)=\int_{\bR}|(G_t*\k)(x)|^2 dx=\int_{\bR}\int_{\bR} G_t(x)G_t(y) f(x-y)dxdy.
\]
The constant  $\cC_{p,\nu}(t)=\cC_p \max(q_t^{p/2-1},1)$ is non-decreasing in $t$, $\cC_p$ is being the constant from Proposition \ref{ros-prop} (which depends on $\nu$ through $m_2$ and $m_p$), and
\[
q_t:=\int_0^t \int_{\bR}\int_{\bR}G_{t-s}(x-y)G_{t-s}(x-z)f(y-z)dydz =\int_0^t \int_{\bR}\frac{\sin^2(s|\xi|)}{|\xi|^2}\mu(d\xi)ds.
\]

We fix $T>0$. For any $t\in [r,T]$ and $n\geq 0$, let $H_n(t)=\sup_{x\in \bR}\|v_n(t,x)-v_{n-1}(t,x)\|_p$. 
Then, for any $t \in [r,T]$ and $n\geq 0$, we have:
\[
H_{n+1}(t) \leq \cC_{p,\nu}(T)\int_0^t H_n(s)\Big(J_2(t-s)+ \big(J_p(t-s)\big)^{p/2}\Big) ds.
\]
Note that
\[
\int_0^T \Big(J_2(t)+ \big(J_p(t)\big)^{p/2}\Big) dt<\infty,
\]
since $\int_0^T J_2(t)dt=q_T<\infty$ and $\int_0^T \big(J_p(t)\big)^{p/2} dt=\cM_p(T)<\infty$. Moreover, 
\[
M:=\sup_{t \in [r,T]}H_0(t) =\sup_{t \in [r,T]} \sup_{x\in \bR}G_{t-r}^p(x-y)|z|^p=2^{-p}|z|^p
\]

By Lemma 15 of \cite{dalang99} with $k_1=k_2=0$ and  $g(t)=\cC_{p,\nu}(T)\big(J_2(t)+ \big(J_p(t)\big)^{p/2}\big)$, there exists a sequence $(a_n)_{n\geq 1}$ of positive numbers depending on $T$ and $g$ (which in turn depends on $(p,\nu,\k)$), such that $\sum_{n\geq 1}a_n^{1/q}<\infty$ for any $q\geq 1$, and
\[
H_n(t) \leq M a_n \quad \mbox{for any} \quad t \in [r,T],n\geq 1.
\]
More precisely, $a_n=G(T)P(S_n\leq T)$ where $G(T)=\int_0^T g(t)dt$, $S_n=\sum_{i=1}^n X_i$, and $(X_i)_{i\geq 1}$ are i.i.d. random variables on $[0,T]$ with density $g(t)/G(T)$.

This means that
\begin{equation}
\label{series-vn}
\sum_{n\geq 1}\sup_{t \in [r,T]}\sup_{x\in \bR} \|v_n(t,x)-v_{n-1}(t,x)\|_{p} \leq \frac{1}{2} |z|\sum_{n \geq 1} a_n^{1/p}<\infty.
\end{equation}
Hence, $\{v_n(t,x)\}_{n\geq 1}$ is Cauchy in $L^p(\Omega)$, uniformly in $(t,x)\in [r,T] \times \bR$. We denote its limit by $v(t,x)$. Letting $n \to \infty$ in \eqref{Picard-v}, we infer that $v$ satisfies \eqref{HAM-delta-int}. The uniqueness of the solution follows by similar methods.

To prove \eqref{mom-v}, we argue as follows:
\begin{align*}
\|v(t,x)\|_p \leq \|v(t,x)-v_n(t,x)\|_p +\sum_{k=1}^{n}\|v_k(t,x)-v_{k-1}(t,x)\|_p+\|v_0(t,x)\|_p.
\end{align*}

Taking the supremum over 
$(t,x) \in [r,T] \times \bR$ and using \eqref{series-vn}, we get:
\[
\sup_{t \in [r,T]}\sup_{x\in \bR} \|v(t,x)\|_p  \leq \sup_{t \in [r,T]}\sup_{x\in \bR}\|v(t,x)-v_n(t,x)\|_p +\frac{1}{2}|z|\sum_{n\geq 1}a_n^{1/p}+\frac{1}{2}|z|
\]
Letting $n \to \infty$, we obtain estimate \eqref{mom-v} with $C_{T,p,\nu,\k}=\frac{1}{2}(\sum_{n\geq 1}a_n^{1/p}+1)$.

b) From part a), we know that $v(t,x)=\lim_{n \to \infty}v_n(t,x)$ in $L^2(\Omega)$. From  definition \eqref{def-vn} of $v_{n+1}(t,x)$, we have:
\begin{align*}
v_{n+1}(t,x)&=zG_{t-r}(x-y)+\int_r^t \int_{\bR} \big(G_{t-s}(x-\cdot) v_n(s,\cdot) * \k\big)(y') L(ds,dy')\\
&=zG_{t-r}(x-y)+\int_r^t \int_{\bR} \int_{\bR_0}\big(G_{t-s}(x-\cdot) v_n(s,\cdot) * \k\big)(y') z'\widehat{N}(ds,dy',dz').
\end{align*}
Since the integrand is predictable, this It\^o integral coincides with the Skorohod integral, by Lemma 2.5.(iv) of \cite{BZ24}. Hence,
\[
v_{n+1}(t,x)=zG_{t-r}(x-y)+\delta(V_{t,x,n}),
\]
where $V_{t,x,n}(s,y',z')=\big(G_{t-s}(x-\cdot) v_n(s,\cdot) * \k\big)(y') z'$. 
We claim that:
\begin{equation}
\label{claim1}
v_{n}(t,x)=\sum_{k=0}^n I_{k}\big(g_k^*(\cdot,r,y,z,t,x)\big) \quad \mbox{for all} \quad n\geq 0.
\end{equation}
Then, letting $n\to \infty$ in $L^2(\Omega)$, we obtain that $v(t,x)$ has the desired chaos expansion.

To prove \eqref{claim1}, we proceed by induction.
For $n=0$, this is clear since $v_0(t,x)=zG_{t-r}(x-y)=g_0^*(r,y,z,t,x)$. For the induction step, we assume that the statement holds for $n$ and we prove it for $n+1$. Using the induction hypothesis, we have:
\begin{align*}
 V_{t,x,n}(s,y',z')&=z'\int_{\bR}G_{t-s}(x-y'')v_n(s,y'')\k(y'-y'')dy''\\
&=z'z\int_{\bR}G_{t-s}(x-y'')G_{s-r}(y''-y)\k(y'-y'')dy''+\\
& \quad  \sum_{k=1}^n z'\int_{\bR}
G_{t-s}(x-y'')I_{k}\big(g_k^*(\cdot,r,y,z,t,x)\big) \k(y'-y'')dy''\\
&=g_1^*(r,y,z,t,x)+\sum_{k=1}^{n}I_k\big(g_{k+1}^*(\cdot,s,y',z',r,y,z,t,x)\big),
\end{align*}
where the last line follows by stochastic Fubini theorem and definition \eqref{def-gn*} of $g_n^*$. By Lemma \ref{Skor-lem}, it follows that
\[
\delta(V_{t,x,n})=\sum_{k=0}^{n}I_{k+1}\big(g_{k+1}^*(\cdot,s,y',z',r,y,z,t,x)\big)=
\sum_{k=1}^{n+1}I_{k}\big(g_{k}^*(\cdot,s,y',z',r,y,z,t,x)\big).
\] 
This proves that the desired statement holds for $n+1$.

c) Using the result in part b), we have:
\begin{align*}
G_{t-r}(x-y)v^{(r,y,z)}(t,x)=\sum_{n\geq 0}I_n\big(G_{t-r}(x-y)g_n^*(\cdot,r,y,z,t,x)\big).
\end{align*}
The conclusion follows using the fact that
$
G_{t-r}(x-y)g_n^*(\cdot,r,y,z,t,x)=\frac{1}{2}g_n^*(\cdot,r,y,z,t,x)$, because
$g_n(t_1,x_1,z_1,\ldots,t_n,x_n,z_n,r,y,z,t,x)$ is equal to a constant times the indicator of the set $\{|x-x_n|<t-t_n,\ldots,|x_2-x_1|<t_2-t_1,|x_1-y|<t_1-r\}$, which is included in the set $\{|x-y|<t-r\}$.
\end{proof}
%% END MDsol

%% BEGIN MD2sol
\section{Second Malliavin derivative of the solution}
\label{section-Mall2}

In this section, we prove a similar estimate for the $p$-th moment of the second Malliavin derivative.

Recall that the solution $u$ has the Poisson chaos expansion \eqref{chaos-Du}.

\begin{proposition}
For any $t>0$ and $x \in \bR$, $u(t,x) \in {\rm dom}(D^2)$.
\end{proposition}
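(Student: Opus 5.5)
The plan is to repeat the argument of Proposition \ref{u-Mall1} one level higher, passing through the Gaussian-driven model \eqref{HAM-W}. By \eqref{check-D2}, $u(t,x)\in{\rm dom}(D^2)$ exactly when
\[
\sum_{n\geq 2} n(n-1)\, n!\, \|\widetilde{f}_n^*(\cdot,t,x)\|_{\cH^{\otimes n}}^2<\infty .
\]
By the Plancherel identity \eqref{Poi-Gauss}, each term satisfies $\|\widetilde{f}_n^*(\cdot,t,x)\|_{\cH^{\otimes n}}^2=m_2^n\|\widetilde{f}_n(\cdot,t,x)\|_{\cH_0^{\otimes n}}^2$. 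Hence the series equals
\[
\sum_{n\geq 2} n(n-1)\, n!\, m_2^n \|\widetilde{f}_n(\cdot,t,x)\|_{\cH_0^{\otimes n}}^2 .
\]
Using the Wiener-chaos expansion \eqref{chaos-UW} of $U(t,x)$, this last quantity is precisely $\bE\|D^{W,2}U(t,x)\|_{\cH_0^{\otimes 2}}^2$. Here $D^{W,2}$ is the second Malliavin derivative with respect to the isonormal process $W$.

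It therefore suffices to know that $U(t,x)\in\mathbb{D}^{2,2}$ with respect to $W$. The first option is to cite it: for the hyperbolic Anderson model with spatially homogeneous Gaussian noise satisfying Dalang's condition in $d=1$, the solution lies in $\mathbb{D}^{\infty}$. This follows from Proposition 7.1 of \cite{sanz05} (Nualart--Quer-Sardanyons type results), and it is exactly the fact already used for Proposition \ref{u-Mall1}.

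If a self-contained argument is preferred, the plan is to bound the chaos norms directly. Writing $\|\widetilde{f}_n(\cdot,t,x)\|_{\cH_0^{\otimes n}}^2 = \frac{1}{n!}\|f_n(\cdot,t,x)\|^2_{\cH_0^{\otimes n}}$-type expressions on the simplex, one bounds them using the Fourier representation and the wave kernel bound $|\cF G_s(\xi)|^2\le C_T/(1+|\xi|^2)$. Dalang's condition for $\mu$ then gives an estimate of the form $n!\|\widetilde f_n\|^2\le C^n T^{n}/ n!^{\,\theta}$ for some $\theta>0$, or, alternatively, $\sum_n n!\, \lambda^n\|\widetilde f_n\|^2<\infty$ for every $\lambda>0$. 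The second form holds because $\lambda$ can be absorbed into $m_2$, and the original equation has a solution for any noise intensity. Either form makes the polynomial factor $n(n-1)$ harmless.

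The main obstacle is this last step if one does not simply cite $\mathbb{D}^\infty$. The cleanest route there is the scaling trick: because \eqref{HAM-W} is well posed with $\sqrt{m_2}$ replaced by $\sqrt{\lambda m_2}$ for every $\lambda$, we get $\sum_n \lambda^n n!\, m_2^n\|\widetilde f_n\|^2<\infty$ for all $\lambda>1$. Since $n(n-1)\le C_\lambda \lambda^n$, the required series converges, and this gives $u(t,x)\in{\rm dom}(D^2)$.
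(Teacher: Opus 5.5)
Your main route is the paper's proof: by \eqref{check-D2} and \eqref{Poi-Gauss}, the required series equals $\bE\|D^{2,W}U(t,x)\|_{\cH_0^{\otimes 2}}^2$, which is finite by Proposition 7.1 of \cite{sanz05}. Your scaling alternative is also correct and more self-contained: it needs only an $L^2$ solution with noise intensity $\sqrt{\lambda m_2}$ and the chaos expansion \eqref{chaos-UW}, and since $n^k\le C_{k,\lambda}\lambda^n$ for $\lambda>1$ it even gives $u(t,x)\in{\rm dom}(D^k)$ for every $k$ without an external Malliavin-regularity result (the sketched bound $C^nT^n/(n!)^{\theta}$ is not needed and would require more work).
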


\begin{proof}
We proceed as in the proof of Proposition \ref{u-Mall1}.
From Proposition 7.1 of \cite{sanz05}, we know that
$U(t,x)$ is twice Malliavin differentiable with respect to $W$. Based on the Wiener-chaos expansion \eqref{chaos-UW}, we deduce that the second Malliavin derivative of $U(t,x)$ (with respect to $W$) is given by:
\[
D_{(r_1,y_1),(r_2,y_2)}^{2,W} U(t,x)=1+\sum_{n\geq 2}m_2^{n/2}n(n-1)I_{n-2}^W\big(\widetilde{f_n}(\cdot,r_1,y_1,r_2,y_2,t,x)\big), \quad r_1,r_2 \in [0,t],y_1,y_2 \in \bR.
\]
Using relation \eqref{Poi-Gauss}, we infer that
\[
\sum_{n\geq 2}n(n-1) n! \|\widetilde{f_n^*}(\cdot,t,x)\|_{\cH^{\otimes n}}^2 =\sum_{n\geq 2}n(n-1) n! \, m_2^{n} \|\widetilde{f_n}(\cdot,t,x)\|_{\cH_0^{\otimes n}}^2 =\bE\|D^{2,W} U(t,x)\|_{\cH_0^{\otimes 2}}^2<\infty.
\]
The fact that $u(t,x)\in {\rm dom}(D^2)$ now follows; see \eqref{check-D2}.

\end{proof}

Let $0<r_1<r_2<t$, $y_1,y_1 \in \bR$ and $z_1,z_2 \in \bR_0$. Then 
\begin{equation}
\label{chaos-D2u}
D_{(r_1,y_1,z_1),(r_2,y_2,z_2)}^2 u(t,x)=\sum_{n\geq 2}n(n-1)I_{n-2}(\widetilde{f}_n^{*}(\cdot,(r_1,y_1,z_1,r_2,y_2,z_2,t,x)).
\end{equation}

The first term in Poisson chaos expansion above is 
\[
2\widetilde{f}_2^{*}(r_1,y_1,z_1,r_2,y_2,z_2,t,x)=2z_1 z_2 \int_{\bR^2} f_2(r_1,y_1',r_2,y_2',t,x)\k(y_1-y_1')\k(y_2-y_2')dy_1'dy_2'.
\]
We will show that this term dominates the other terms, when estimating the $p$-th moment.

\medskip

As in the case of the Malliavin derivative of $u(t,x)$, we start with the representation of the kernel $f_{n}^*(\cdot,t,x)$ with fixed arguments $(r_1,y_1,z_1)$ and $(r_2,y_2,z_2)$ in the last two positions: we write
\begin{equation}
\label{fn*2}
\widetilde{f}_n^{*}(\cdot,r_1,y_1,z_1,r_2,y_2,z_2,t,x)=\frac{1}{n(n-1)}\sum_{i,j=1,i<j}^{n}
h_{ij}^{*(n)}(\cdot,r_1,y_1,z_1,r_2,y_2,z_2,t,x),
\end{equation}
with $h_{ij}^{*(n)}(\cdot,r_1,y_1,z_1,r_2,y_2,z_2,t,x)$ the symmetrization of the function $f_{ij}^{*(n)}(\cdot,r_1,y_1,z_1,r_2,y_2,z_2,t,x)$ obtained by placing $(r_1,y_1,z_1)$ and $(r_2,y_2,z_2)$ on the $i$-th and $j$-th positions, respectively, among the arguments of $f_n^*(\cdot,t,x)$. More precisely, if $\xi_{\ell}=(t_{\ell},x_{\ell},\zeta_{\ell})$ for $\ell \leq n-2$, then
\begin{align*}
& f_{ij}^{*(n)}(\xi_1,\ldots,\xi_{n-2},r_1,y_1,z_1,r_2,y_2,z_2,t,x)\\
&  \quad :=f_n^{*}(\xi_1,\ldots,\xi_{i-1},r_1,y_1,z_1,\xi_{i},\ldots,\xi_{j-2},r_2,y_2,z_2,\xi_{j-1},\ldots,
\xi_{n-2,}t,x)\\
& \quad = z_1 z_2 \prod_{\ell=1}^{n-2}\zeta_{\ell} \int_{\bR^{2}} \k(y_1-y_1') \k(y_2-y_2')\\
& \quad \quad \left(\int_{\bR^{n-j}} G_{t-t_{n-2}}(x-w_{n-2}) \ldots G_{t_{j-1}-r}(w_{j-1}-y_2')
\prod_{\ell=j-1}^{n-2}\k(x_{\ell}-w_{\ell}) dw_{j-1}\ldots dw_{n-2}\right) \\
& \quad \quad \left(\int_{\bR^{n-j}} G_{r-t_{j-2}}(y_2'-w_{j-2}) \ldots G_{t_{i}-\theta}(w_{i}-y_1') 
\prod_{\ell=i}^{j-2}\k(x_{\ell}-w_{\ell}) dw_{i}\ldots dw_{j-2}\right) \\
& \quad \quad \left(\int_{\bR^{i-1}} G_{\theta-t_{i-1}}(y_1'-w_{i-1}) \ldots G_{t_{2}-t_1}(w_2-w_1) 
\prod_{\ell=1}^{i-1}\k(x_{\ell}-w_{\ell}) dw_{1}\ldots dw_{i-1}\right) dy_1'dy_2'.
\end{align*}

Recalling notation \eqref{def-gn*}, we obtain the following representation: for any $1\leq i<j \leq n$,
\begin{align*}
& f_{ij}^{*(n)}(\xi_1,\ldots,\xi_{n-2},r_1,y_1,z_1,r_2,y_2,z_2,t,x)=z_1 z_2 \int_{\bR^{2}} \k(y_1-y_1') \k(y_2-y_2') f_{i-1}^*(\xi_1,\ldots,\xi_{i-1},r_1,y_1')\\
& \qquad \qquad \qquad \qquad g_{j-i-1}^*(\xi_i,\ldots,\xi_{j-2},r_1,y_1',z_1,r_2,y_2',z_2) g_{n-j}^*(\xi_{j-1},\ldots,\xi_{n-2},r_2,y_2',z_2,t,x)dy_1'dy_2'.
\end{align*}

\begin{theorem}
\label{key-thD2}
Let $p\geq 2$ be such that $m_p<\infty$. If \eqref{def-Mp} holds, then for any $T>0$, $t,r_1,r_2 \in [0,T]$, $x,y_1,y_2 \in \bR$ and $z_1,z_2 \in \bR_0$,
\begin{equation}
\label{key-D2}
\|D_{(r_1,y_1,z_1),(r_2,y_2,z_2)}^2 u(t,x) \|_p \leq C_{T,p,\nu,\k}'' |z_1 z_2| \int_{\bR^2} \widetilde{f}_2(r_1,y_1',r_2,y_2',t,x) \k(y_1-y_1')\k(y_2-y_2') dy_1'dy_2',
\end{equation}
where $C_{T,p,\nu,\k}''= 8K_p(T) (C_{T,p,\nu,\k})^2$ with $K_p(T)$ and $C_{T,p,\nu,\k}$ given by \eqref{mom-u}, respectively \eqref{mom-v}. In particular, under Assumption B, relation \eqref{key-D2} holds for any $p\geq 2$ such that $m_p<\infty$.
\end{theorem}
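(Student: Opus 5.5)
By symmetry of $D^2$ in its two arguments, I reduce to the case $r_1<r_2\le t$. The degenerate cases are easy: if $r_2>t$ both sides vanish, and the diagonal $r_1=r_2$ is a null set, or is handled by the same argument. The plan mirrors the proof of Theorem \ref{key-thD}.

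\textbf{Step 1: reorganize the chaos series.} Start from the chaos expansion \eqref{chaos-D2u} together with \eqref{fn*2}. This gives
\[
D^2_{(r_1,y_1,z_1),(r_2,y_2,z_2)}u(t,x)=\sum_{n\ge2}\sum_{1\le i<j\le n}I_{n-2}\big(f_{ij}^{*(n)}(\cdot,r_1,y_1,z_1,r_2,y_2,z_2,t,x)\big).
\]
Insert the product representation of $f_{ij}^{*(n)}$ as $f_{i-1}^*\otimes g_{j-i-1}^*\otimes g_{n-j}^*$, integrated against $\k(y_1-y_1')\k(y_2-y_2')$. Then apply the stochastic Fubini theorem (Lemma 2.6 of \cite{BZ24}) to pull the $dy_1'dy_2'$ integral outside $I_{n-2}$.

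\textbf{Step 2: factor the multiple integrals.} The three factors have time variables supported in the disjoint intervals $(0,r_1)$, $(r_1,r_2)$ and $(r_2,t)$. Apply Lemma \ref{prod-lem} twice, first splitting at $r_1$ and then at $r_2$. Lemma \ref{prod-lem} is stated only for kernels on $(0,r)$ and $(r,\infty)$. The same contraction argument works for any kernels whose time variables lie in disjoint intervals, so I will note this extension. This yields
\[
I_{n-2}(f_{ij}^{*(n)})=\int_{\bR^2}\k(y_1-y_1')\k(y_2-y_2')\,I_{i-1}(f^*_{i-1})\,I_{j-i-1}(g^*_{j-i-1})\,I_{n-j}(g^*_{n-j})\,dy_1'dy_2'.
\]
Reindex with $a=i-1$, $b=j-i-1$, $c=n-j\ge0$, which ranges over independent nonnegative indices. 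The triple series then factors, using \eqref{chaos-u} and \eqref{chaos-v}, into
\[
D^2u(t,x)=\int_{\bR^2}\k(y_1-y_1')\k(y_2-y_2')\,u(r_1,y_1')\,v^{(r_1,y_1',z_1)}(r_2,y_2')\,\frac{v^{(r_2,y_2',z_2)}(t,x)}{z_2}\,z_2\,dy_1'dy_2'.
\]
Here I must track the $z$ factors. The $g^*$ kernels in \eqref{def-gn*} carry the factor $z$ of their initial point. So the middle factor is exactly $v^{(r_1,y_1',z_1)}(r_2,y_2')$, and the last factor is $v^{(r_2,y_2',z_2)}(t,x)$; I will check that the overall $z_1z_2$ is not double counted.

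The main obstacle is justifying this rearrangement of the series and the interchange with the $dy'$ integrals. I would first argue with the truncated sums, using $L^2$ convergence of the Picard/chaos expansions and the integrability furnished by $\k$ and $G$, and then pass to the limit. This is the same level of rigor as in the proof of Theorem \ref{key-thD}.

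\textbf{Step 3: independence and moment bounds.} The three random factors are measurable with respect to $\cF_{r_1}$, $\cF_{r_1,r_2}$ and $\cF_{r_2,t}$ respectively, see \eqref{def-Frt}. These $\sigma$-fields are mutually independent, so the $L^p$ norm of the product equals the product of the norms. By Minkowski's inequality, \eqref{mom-u} and \eqref{2Gv-1} (applied twice),
\[
\|D^2u(t,x)\|_p\le 4K_p(T)C_{T,p,\nu,\k}^2|z_1z_2|\int_{\bR^2}G_{r_2-r_1}(y_2'-y_1')G_{t-r_2}(x-y_2')\k(y_1-y_1')\k(y_2-y_2')\,dy_1'dy_2'.
\]
The product of the two $G$'s is $f_2(r_1,y_1',r_2,y_2',t,x)$. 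This equals $2\widetilde f_2$ when $r_1<r_2$, since the other ordering term vanishes. So the bound becomes $8K_p(T)C_{T,p,\nu,\k}^2$ times the right-hand side of \eqref{key-D2}. The final claim under Assumption B follows from Remark \ref{rem-M}.
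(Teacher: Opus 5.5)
Your proposal is correct and follows the paper's proof essentially step by step. The shared steps are: the chaos expansion \eqref{chaos-D2u} with \eqref{fn*2}; stochastic Fubini; two applications of Lemma~\ref{prod-lem}, which give $D^2u(t,x)=\int_{\bR^2}u(r_1,y_1')v^{(r_1,y_1',z_1)}(r_2,y_2')v^{(r_2,y_2',z_2)}(t,x)\k(y_1-y_1')\k(y_2-y_2')dy_1'dy_2'$; independence of $\cF_{r_1},\cF_{r_1,r_2},\cF_{r_2,t}$ together with \eqref{mom-u} and \eqref{2Gv-1}; and finally $f_2=2\widetilde f_2$ for $r_1<r_2$, which yields the constant $8K_p(T)C_{T,p,\nu,\k}^2$. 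Your worry about double-counting $z_1z_2$ is well placed, since the paper's displayed representation of $f_{ij}^{*(n)}$ carries a redundant prefactor, but the factorization you end up with is the correct one and coincides with the paper's. Also, no extension of Lemma~\ref{prod-lem} is needed: applying it at $r=r_1$ and then at $r=r_2$ fits its hypotheses exactly as stated.
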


\begin{proof} We proceed as in the proof of Theorem \ref{key-thD}. Let $0<r_1 < r_2<t$. Using \eqref{chaos-D2u} and \eqref{fn*2}, we have:
\begin{align*}
D^2_{(r_1,y_1,z_1),(r_2,y_2,z_2)}u(t,x) & = \sum_{n\geq 1} \sum_{i,j=1, i<j}^{n}I_{n-2}\big(f_{ij}^{*(n)}(\cdot,r_1,y_1,z_1,r_2,y_2,z_2,t,x)\big)
\\
& = \sum_{i \geq 1} \sum_{j > i} \sum_{n\geq j}I_{n-2}\big(f_{ij}^{*(n)}(\cdot,r_1,y_1,z_1,r_2,y_2,z_2,t,x)\big).
\end{align*}

We use the stochastic Fubini theorem given by Lemma 2.6 of \cite{BZ24}, and we apply (twice) the product formula given by Lemma \ref{prod-lem}. We obtain:
\begin{align*}
& I_{n-2}\big(f_{ij}^{*(n)}(\cdot,r_1,y_1,z_1,r_2,y_2,z_2,t,x)\big) 
\\
& = \int_{\bR^2} dy_1' dy_2' \, \k(y_1-y_1') \k(y_2 - y_2') 
\\
& \qquad I_{n-2}\big(f_{i-1}^*(\cdot,r_1,y_1')\otimes g_{j-i-1}^*(\cdot,r_1,y_1',z_1,r_2,y_2') \otimes g_{n-j}^*(\cdot,r_2,y_2',z_2,t,x) \big) 
\\
& = \int_{\bR^2}dy_1'dy_2' \, \k(y_1-y_1') \k(y_2-y_2') 
\\
& \qquad I_{i-1}\big(f_{i-1}^*(\cdot,r_1,y_1')\big)  I_{j-i-1}\big( g_{j-i-1}^*(\cdot,r_1,y_1',z_1,r_2,y_2')\big)I_{n-j}\big( g_{n-j}^*(\cdot,r_2,y_2',z_2,t,x)\big). 
\end{align*}

Therefore,
\begin{align*}
& D^2_{(r_1,y_1,z_1),(r_2,y_2,z_2)}u(t,x) 
\\
&= \int_{\bR^2}  dy_1'dy_2' \, \k(y_1-y_1')k(y_2-y_2') \left(\sum_{i\geq 1} I_{i-1}\big(f_{i-1}^*(\cdot,r_1,y_1') \big) \right) 
\\
& \qquad  \left(\sum_{j > i} I_{j-i-1}\big( g_{j-i-1}^*(\cdot,r_1,y_1',z_1,r_2,y_2')\big) \right) \left(\sum_{n \geq j} I_{n-j}\big( g_{n-j}^*(\cdot,r_2,y_2',z_2,t,x)\big) \right).
\end{align*}

The first series is the chaos expansion \eqref{chaos-u} of $u(r_1,y_1')$. The second and third series are the chaos expansions of $v^{(r_1,y_1',z_1)}(r_2,y_2')$ and $v^{(r_2,y_2',z_2)}(t,x)$ given by $\eqref{HAM-delta}$ (see \eqref{chaos-v}). It follows that: 
\[
D^2_{(r_1,y_1,z_1),(r_2,y_2,z_2)}u(t,x)=\int_{\bR^2} u(r_1,y_1')v^{(r_1,y_1',z_1)}(r_2,y_2')v^{(r_2,y_2',z_2)}(t,x)\k(y_1-y_1') \k(y_2-y_2')dy_1' dy_2'.
\]
Note that the variables $u(r_1,y_1')$ and $v^{(r_1,y_1',z_1)}(r_2,y_2')$ and $v^{(r_2,y_2',z_2)}(t,x)$ are independent since they are measurable with respect to $\cF_{r_1}$, $\cF_{r_1,r_2}$, respectively $\cF_{r_2,t}$, and these $\sigma$-fields are independent since the sets $[0,r_1]$, $(r_1,r_2]$ and $(r_2,t]$ are disjoint. Here $\cF_{r,t}$ is given by \eqref{def-Frt}.
Therefore, using Minkowski inequality, followed by \eqref{mom-u} and \eqref{2Gv-1}, we have:
\begin{align*}
& \|D^2_{(r_1,y_1,z_1),(r_2,y_2,z_2)}u(t,x)\|_p 
\\
& \leq \int_{\bR^2} \|u(r_1,y_1')\|_p\|v^{(r_1,y_1',z_1)}(r_2,y_2')\|_p\|v^{(r_2,y_2',z_2)}(t,x)\|_p\k(y_1-y_1') \k(y_2-y_2') dy_1' dy_2'
\\
& \leq 4 K_p(T) (C_{T,p,\nu,\k})^2|z_1z_2|\int_{\bR^2}G_{t-r_2}(x-y_2')G_{r_2-r_1}(y_2'-y_1') \k(y_1-y_1') \k(y_2-y_2') dy_1' dy_2.
\end{align*}
The same argument shows for $r_2 < r_1$ that, 
\begin{align*}
 & \|D^2_{(r_1,y_1,z_1),(r_2,y_2,z_2)}u(t,x)\|_p  
 \\
 & \leq 4 K_p(T) (C_{T,p,\nu,\k})^2|z_1z_2|\int_{\bR^2}G_{t-r_1}(x-y_1')G_{r_1-r_2}(y_1'-y_2') \k(y_1-y_1') \k(y_2-y_2') dy_1' dy_2.
\end{align*}
The cases $r_1 < r_2$ and $r_2 < r_1$ combine to give \eqref{key-D2} (see \eqref{fn-defn}). 
\end{proof}
%% END MD2sol

%% BEGIN Ergodicity
\section{Proofs of the main results}
\label{section-proofs}

In this section, we include the proofs of Theorems \ref{ergodic-th}, \ref{cov-th}, \ref{QCLT} and \ref{FCLT}.

\medskip

First, we introduce the function $\varphi_{t,R}$ and mention some of its properties which will be used below.
For any $0\leq r \leq t$, $y \in \bR$ and $R>0$, we denote
\begin{equation}
\label{def-phi-tR}
\varphi_{t,R}(r,y)=\int_{-R}^R G_{t-r}(x-y)dx \quad \mbox{and} \quad \varphi_{t,R}(y)=\varphi_{t,R}(0,y).
\end{equation}

\begin{lemma}
a) For any $0 \leq r\leq t$ and $R>0$, we have:
\begin{equation}
\label{norm-phi}
\|\varphi_{t,R}(r, \cdot)\|_{L^p(\bR)} \les R^{1/p}(t-r) \quad \mbox{for any $p\geq 1$}.
\end{equation} 
In particular for $r = 0$,
\[
\| \varphi_{t,R} \|_{L^p(\bR)} \les R^{1/p}.
\]
b) For any $0  \leq r\leq s\leq t$ and $R>0$, we have:
\begin{equation}
\label{tsPhiDif}
\|\varphi_{t,R}(r, \cdot)-\varphi_{s,R}(r, \cdot)\|_{L^p(\bR)} \les R^{1/p}(t-s) \quad \mbox{for any $p\geq 1$}.
\end{equation} 
\end{lemma}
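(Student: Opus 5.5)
The plan is to compute $\varphi_{t,R}(r,y)$ explicitly and read both bounds off the formula. By \eqref{def-G},
\[
\varphi_{t,R}(r,y)=\frac12\,\mathrm{Leb}\big([-R,R]\cap(y-(t-r),\,y+(t-r))\big).
\]
Writing $\tau=t-r$, this shows $0\leq \varphi_{t,R}(r,y)\leq \tau$. It also shows that $\varphi_{t,R}(r,y)=0$ unless $|y|<R+\tau$. The $L^p$ norms then follow from the elementary estimate
\[
\|\varphi\|_{L^p}^p\leq \|\varphi\|_\infty^{p-1}\|\varphi\|_{L^1}.
\]

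\emph{Part a).} For the $L^1$ norm we use Fubini:
\[
\|\varphi_{t,R}(r,\cdot)\|_{L^1(\bR)}=\int_{-R}^R\int_{\bR}G_\tau(x-y)\,dy\,dx=2R\tau .
\]
Combining this with $\|\varphi_{t,R}(r,\cdot)\|_\infty\leq\tau$ gives
\[
\|\varphi_{t,R}(r,\cdot)\|_{L^p}^p\leq \tau^{p-1}\cdot 2R\tau=2R\tau^p .
\]
Hence $\|\varphi_{t,R}(r,\cdot)\|_{L^p}\les R^{1/p}(t-r)$, with constant $2^{1/p}\le 2$, uniform in $p\geq1$. Taking $r=0$ gives $\|\varphi_{t,R}\|_{L^p}\les R^{1/p}t$. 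Since the constant is allowed to depend on $t$, this is $\les R^{1/p}$.

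\emph{Part b).} Let $\tau=t-r\geq \sigma=s-r$. Since $1_{\{|w|<\tau\}}\geq 1_{\{|w|<\sigma\}}$, the difference $\varphi_{t,R}(r,y)-\varphi_{s,R}(r,y)$ is nonnegative and equals
\[
\frac12\int_{-R}^R 1_{\{\sigma\leq|x-y|<\tau\}}\,dx .
\]
The set $\{x:\sigma\leq|x-y|<\tau\}$ is a union of two intervals of total length $2(\tau-\sigma)$, so the difference is at most $t-s$ pointwise. By Fubini its $L^1$ norm is exactly
\[
2R\cdot\tfrac12\cdot 2(\tau-\sigma)=2R(t-s).
\]
The same interpolation then yields
\[
\|\varphi_{t,R}(r,\cdot)-\varphi_{s,R}(r,\cdot)\|_{L^p}^p\leq (t-s)^{p-1}\cdot 2R(t-s),
\]
which is the claim.

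No real obstacle is expected; the only care needed is the monotonicity in $\tau$, which makes the difference in part b) nonnegative and allows the $L^1$ norm to be computed exactly by Fubini.
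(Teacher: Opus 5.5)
Your proof is correct and is essentially the paper's argument. The paper's H\"older step, taken against a measure of total mass $t-r$ (resp.\ $t-s$), amounts to your pointwise bound $\varphi^p\le \|\varphi\|_\infty^{p-1}\varphi$, and it is followed by the same Fubini computation of the $L^1$ norm. Your bookkeeping is slightly cleaner: it gives the sharp constant $2R(t-r)^p$ in part a), whereas the paper's intermediate inequality with $G_{t-r}^p$ drops a factor $2^{p-1}$, which is harmless for the $\les$ claim.
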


\begin{proof}
a) This follows by H\"older inequality and the fact that $\int_{\bR}G_t(x)dx=t$:
\begin{align*}
\|\varphi_{t,R}(r,\cdot)\|_{L^p(\bR)}^{p}&=
\int_{\bR}\left(\int_{-R}^R G_{t-r}(x-y)dx\right)^p dy \leq (t-r)^{p-1} \int_{\bR}\int_{-R}^{R}G_{t-r}^p(x-y)dxdy\\
&=\frac{(t-r)^{p-1}}{2^{p-1}} \int_{-R}^{R}
\left(\int_{\bR}G_{t-r}(x-y)dy\right)dx = 2^{2-p} (t-r)^p R.
\end{align*}

b) We write
\[
\varphi_{t,R}(r, \cdot)-\varphi_{s,R}(r, \cdot)=\frac{1}{2}\int_{-R}^R 1_{\{s-r<|x-y|<t-r\}} dx=\int_{\bR}1_{\{|x|<R\}} \mu(dx),
\]
where $\mu(dx)=\frac{1}{2}1_{\{s-r<|x-y|<t-r\}}$, with $\mu(\bR)=t-s$. By H\"older's inequality,
\[
\big|\varphi_{t,R}(r, \cdot)-\varphi_{s,R}(r, \cdot)\big|^p \leq (t-s)^{p-1} \int_{-R}^R 1_{\{s-r<|x-y|<t-r\}} dx.
\]
Hence, by Fubini's theorem,
\[
\int_{\bR}\big|\varphi_{t,R}(r, \cdot)-\varphi_{s,R}(r, \cdot)\big|^p dx \leq (t-s)^{p-1} \int_{-R}^R \left(\int_{\bR}1_{\{s-r<|x-y|<t-r\}} dy\right) dx=4R (t-s)^p.
\]
\end{proof}

If $\k \in L^1(\bR)$, then by Young's inequality and \eqref{norm-phi}, for any $p\geq 1$,

\begin{equation}
\label{young-phi}
\| \varphi_{t,R}*\k \|_{L^p(\bR)} \leq \|\varphi_{t,R}\|_{L^p(\bR)} \|\k\|_{L^1(\bR)} \les R^{1/p}.
\end{equation}

If $\k =R_{1,\alpha/2}$ for some $\alpha \in (0,1)$, then by Theorem \ref{HLS} and \eqref{norm-phi}, for any $p>\frac{2}{2-\alpha}$
\begin{equation}
\label{HLS-phi}
\| \varphi_{t,R}*\k \|_{L^p(\bR)} \les \|\varphi_{t,R}\|_{L^q(\bR)}  \les R^{1/q},
\end{equation}
where $\frac{1}{q}=\frac{1}{p}+\frac{\alpha}{2}$.

\subsection{Ergodicity}

In this section, we give the proof of Theorem \ref{ergodic-th}. This
follows from the key estimate for the Malliavin derivative of $u$ given by Theorem \ref{key-thD}, and the lemma below.

\begin{lemma}
\label{lem-key-Z}
Suppose that Assumption B holds.
Let $\{Z(t,x);t\geq 0,x\in \bR\}$ be an adapted random field such that
$\{Z(t,x)\}_{x\in \bR^d}$ is strictly stationary and $Z(t,x) \in {\rm dom}(D)$ for any $t\geq 0$ and $x \in \bR$. Assume that for any $0<r<t$, $x,y \in \bR$ and $z \in \bR_0$,
\begin{equation}
\label{key-Z}
\|D_{r,y,z}Z(t,x)\|_{2} \leq C_t |z| \int_{\bR} G_{t-r}(x-y')\k(y-y')dy',
\end{equation}
where $C_t>0$ is a constant depending on $t$. Then $\{Z(t,x)\}_{x\in \bR}$ is ergodic.
\end{lemma}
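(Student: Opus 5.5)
We will use the standard covariance criterion for ergodicity of a strictly stationary field: $\{Z(t,x)\}_{x\in\bR}$ is ergodic as soon as
\[
\lim_{R\to\infty}\frac{1}{R^2}{\rm Var}\left(\int_{-R}^R g\Big(\sum_{j=1}^m b_j Z(t,x+\zeta_j)\Big)dx\right)=0
\]
for all $m\geq 1$, $b_j,\zeta_j\in\bR$, and $g\in\{\cos,\sin\}$. This is the form used in \cite{CKNP-delta,NZ20-1} (the span of such functionals is dense, so this yields triviality of the invariant $\sigma$-field). Fix such $m,b_j,\zeta_j,g$ and set $Y(x)=\sum_j b_jZ(t,x+\zeta_j)$. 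Since $g$ is $1$-Lipschitz, the chain rule \eqref{chain1} and \eqref{key-Z} give
\[
\|D_{r,y,z}\,g(Y(x))\|_2\leq \sum_{j}|b_j|\,\|D_{r,y,z}Z(t,x+\zeta_j)\|_2\les |z|\sum_j \big(G_{t-r}(x+\zeta_j-\cdot)*\k\big)(y).
\]

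Next we apply the Poincar\'e inequality \eqref{poincare} to $F=\int_{-R}^R g(Y(x))dx$; that $F\in{\rm dom}(D)$ follows from closability of $D$ and the above uniform bound. Using Minkowski's inequality in $x$,
\[
{\rm Var}(F)\leq \int_0^t\int_{\bR}\int_{\bR_0}\Big\|\int_{-R}^R D_{r,y,z}g(Y(x))dx\Big\|_2^2\,\nu(dz)dydr
\les m_2\sum_j\int_0^t \big\|\varphi_{t,R}(r,\cdot+\zeta_j)*\k\big\|_{L^2(\bR)}^2dr,
\]
where $\varphi_{t,R}(r,\cdot)$ is as in \eqref{def-phi-tR}. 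By Fubini, the $x$-integral of the bound on $\|D_{r,y,z}g(Y(x))\|_2$ becomes $|z|\,(\varphi_{t,R}(r,\cdot+\zeta_j)*\k)(y)$, and translations do not change the $L^2$ norm.

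The remaining step is to show that this integral is $o(R^2)$. If $\k\in L^1(\bR)$, Young's inequality and \eqref{norm-phi} with $p=2$ give $\|\varphi_{t,R}(r,\cdot)*\k\|_{L^2}^2\les R(t-r)^2$, so ${\rm Var}(F)\les R=o(R^2)$. If $\k=R_{1,\alpha/2}$, we use Theorem \ref{HLS} with $p=2$, which requires $2>\frac{2}{2-\alpha}$ and holds since $\alpha>0$. Taking $\frac1q=\frac12+\frac{\alpha}{2}$, \eqref{norm-phi} gives $\|\varphi_{t,R}(r,\cdot)*\k\|_{L^2}^2\les R^{2/q}=R^{1+\alpha}$. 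Hence ${\rm Var}(F)\les R^{1+\alpha}=o(R^2)$ because $\alpha<1$.

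The main point requiring care is the justification of the ergodicity criterion itself, namely passing from the covariance condition on cosine and sine functionals to triviality of invariant sets. We will quote it from the literature, as in \cite{CKNP-delta}, rather than reprove it. The measurability issues (joint measurability of $Z$ in $x$, and applying $D$ inside the $dx$-integral) are handled by the $L^2$ continuity coming from stationarity together with Fubini.
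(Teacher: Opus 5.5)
Your proposal is correct and follows essentially the same route as the paper: the cosine/sine variance criterion for ergodicity (the paper quotes it as Lemma 4.2 of \cite{BZ25}), the Poincar\'e inequality \eqref{poincare}, the chain rule \eqref{chain1} combined with \eqref{key-Z}, Minkowski's inequality in $x$, and then Young's inequality or Theorem \ref{HLS} to obtain a variance bound $\les R^{\beta}=o(R^2)$. Two small slips do not affect the argument: the HLS condition $2>\frac{2}{2-\alpha}$ holds because $\alpha<1$ (not because $\alpha>0$), and strict stationarity by itself does not give $L^2$-continuity in $x$, so joint measurability of $Z$ should be taken as a standing assumption, as the paper implicitly does.
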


\begin{proof} By Lemma 4.2 of \cite{BZ25}, it suffices to show that for any $k\geq 1$, and $b_1, \ldots , b_k, \zeta_1, \ldots, \zeta_k\in\bR$, the following relation holds:
\begin{align}
\label{ERG1}
\lim_{R\to\infty} \frac{1}{R^{2}}
{\rm Var}\bigg(  \int_0^R  \cos \bigg(\sum_{j=1}^k b_j  Z(t,x + \zeta_j)  \bigg) dx \bigg)
=0,
\end{align}
together with a similar relation in which $\cos$ is replaced by $\sin$. We denote by $V_R$ the variance appearing in \eqref{ERG1}. By Poincar\'e inequality \eqref{poincare}, 
\begin{align*}
V_R & \leq E \left\| D \left( \int_0^R  \cos \bigg(\sum_{j=1}^k b_j  Z(t,x + \zeta_j)  \bigg) dx \right) \right\|_{\cH}^2 \\
&= \int_0^t \int_{\bR} \int_{\bR_0} \left\|\int_0^R D_{r,y,z} \cos \bigg(\sum_{j=1}^k b_j  Z(t,x + \zeta_j) dx  \bigg) \right\|_2^2 drdy \nu(dz)\\
& \leq \int_0^t \int_{\bR} \int_{\bR_0} \left( \int_0^R \bigg\|D_{r,y,z} \cos \bigg(\sum_{j=1}^k b_j  Z(t,x + \zeta_j)\bigg) \bigg\|_2 dx  \right)^2 drdy \nu(dz).
\end{align*}
By \eqref{chain1} and \eqref{key-Z},
\begin{align*}
& \left(\int_0^R \bigg\|D_{r,y,z} \cos \bigg(\sum_{j=1}^k b_j  Z(t,x + \zeta_j)\bigg) \bigg\|_2 dx \right)^2 \leq
\left( \int_0^R  \bigg\|\sum_{j=1}^k b_j  D_{r,y,z} Z(t,x + \zeta_j) \bigg\|_2 dx \right)^2 \\
 & \quad \leq \left(\sum_{j=1}^k |b_j| \int_0^R \big\| D_{r,y,z} Z(t,x + \zeta_j) \big\|_2 dx\right)^2 \leq k \sum_{j=1}^k b_j^2 \left( \int_0^R \big\| D_{r,y,z} Z(t,x + \zeta_j) \big\|_2 dx\right)^2 \\
 & \quad \leq  k C_t^2 |z|^2 \sum_{j=1}^k b_j^2 \left( \int_0^R  \int_{\bR} G_{t-r}(x+\zeta_j-y') \k(y-y') dy' dx\right)^2 \\
 & \quad \leq  k C_t^2 |z|^2 \sum_{j=1}^k b_j^2 \left(  \int_{\bR} \varphi_{t,R}(y'-\zeta_j) \k(y-y') dy' \right)^2  = k C_t^2 |z|^2 \sum_{j=1}^k b_j^2 \big[\big(\varphi_{t,R}*\k)(y-\zeta_j\big)\big]^2
\end{align*}
where $\varphi_{t,R}$ is given by \eqref{def-phi-tR}. Hence, 
\[
V_R \leq  k C_t^2 m_2 t \sum_{j=1}^k b_j^2 \int_{\bR}\big[\big(\varphi_{t,R}*\k)(y-\zeta_j\big)\big]^2 dy=k C_t^2 m_2 t \sum_{j=1}^k b_j^2 \big\|\varphi_{t,R}*\k\big\|_{L^2(\bR)}^2 \les R^{\beta},
\]
where for the last inequality we used \eqref{young-phi} if  $k \in L^1(\bR)$, respectively \eqref{HLS-phi} if $\k=R_{1,\alpha/2}$, and $\beta$ is defined by \eqref{def-beta}. This concludes the proof of \eqref{ERG1}.

\end{proof}
%% END Ergodicity

%% BEGIN Limcov
\subsection{Limiting covariance}

In this section, we give the proof of Theorem \ref{cov-th}. 

\medskip

Similarly to \eqref{Poi-Gauss}, it can be proved that for any $t,s\in \bR_{+}$, $x,y \in \bR$ and $n\geq 1$,
\[ \langle \widetilde{f}_n^*(\cdot,t,x),\widetilde{f}_n^*(\cdot,s,y)\rangle_{\cH^{\otimes n}}= m_2^n \, \langle \widetilde{f}_n(\cdot,t,x),\widetilde{f}_n(\cdot,s,y)\rangle_{\cH_0^{\otimes n}}.
\]

Therefore,
\begin{align*}
& \bE\big[\big(u(t,x)-1\big)\big(u(s,y)-1\big)\big]=\sum_{n\geq 1}n!\,\langle \widetilde{f}_n^*(\cdot,t,x),\widetilde{f}_n^*(\cdot,s,y)\rangle_{\cH^{\otimes n}}\\
& \quad = \sum_{n\geq 1}n!\, m_2^n \, \langle \widetilde{f}_n(\cdot,t,x),\widetilde{f}_n(\cdot,s,y)\rangle_{\cH_0^{\otimes n}}=\bE\big[\big(U(t,x)-1\big)\big(U(s,y)-1\big)\big],
\end{align*}
where $U=\{U(t,x);t\geq 0,x\in \bR\}$ is the solution of \eqref{HAM-W}.
It follows that
\begin{align*}
\bE\big[ F_R(t) F_R(s)\big] & =\int_{-R}^R \int_{-R}^R \bE\big[\big(u(t,x)-1\big)\big(u(s,y)-1\big)\big] dxdy =
% = \int_{-R}^R \int_{-R}^R \bE\big[\big(U(t,x)-1\big)\big(U(s,y)-1\big)\big]=
\bE\big[ F_R'(t) F_R'(s)\big],
\end{align*}
where 
\[
F_R'(t)=\int_{-R}^R \big(U(t,x)-1\big)dx.
\]

If $k\in L^1(\bR)$, then by relation (4.1) of \cite{NZ22}, 
\[
\lim_{R\to \infty} \frac{1}{R}\bE\big[ F_R'(t) F_R'(s)\big] =2\int_{\bR} {\rm Cov}\big(U(t,x),U(s,0)\big)dx=:K(t,s) \quad \mbox{is finite}.
\]

If $\k=R_{1,\alpha/2}$ for some $\alpha \in (0,1)$, then by Remark 2 of \cite{DNZ20},
%with $H=\frac{\alpha+1}{2}$,
\[
\lim_{R \to \infty}\frac{1}{R^{\alpha+1}}\bE\big[ F_R'(t) F_R'(s)\big]=c_{\alpha} \int_0^{t\wedge s}
(t-r)(s-r)dr=:K(t,s)
\]
where $c_{\alpha}>0$ is a constant depending on $\alpha$. This concludes the proof.
%% END Limcov

%% BEGIN QCLT
\subsection{Quantitative CLT}

In this section, we give the proof of Theorem \ref{QCLT}. We will use the following result.

\begin{proposition}[Theorem 3.4 of \cite{trauthwein25}]
\label{tara}
Let $F\in {\rm dom}(D)$ with $\bE[F]=0$ and  ${\rm Var}(F)=\sigma^2 > 0$. Then, 
for any $p,q\in (1,2]$,
\begin{align*}
d_{\rm FM}\left( \frac{F}{\sigma}, Z\right) & \leq d_{\rm W}\left(\frac{F}{\sigma}, Z\right)  \leq    \gamma_1 +  \gamma_2 +  \gamma_3\\
  d_{\rm K}\left(\frac{F}{\sigma}, Z\right) & \leq  \sqrt{ \frac{\pi}{2}} (  \gamma_1 +  \gamma_2)   +  \gamma_4 +  \gamma_5+  \gamma_6 +  \gamma_7,
\end{align*}
where  $Z\sim N(0,1)$ and the seven quantities $ \gamma_1, \ldots,  \gamma_7$ are given as follows:
\begin{align}
\begin{aligned}
 \gamma_1&:=  \frac{2^{\frac2p + \frac12} }{\sqrt{\pi}}
 \sigma^{-2} \bigg( \int_{Z}  \bigg[ \int_Z  \| D_{\xi_2}F \|_{2p}  \| D^2_{\xi_1,\xi_2}F\|_{2p} 
  \,  \fm(d\xi_2) \bigg]^p
  \fm(d\xi_1) \bigg)^{\frac1p}
\\
 \gamma_2&:=\frac{2^{\frac2p - \frac12} }{\sqrt{\pi}}
\sigma^{-2} \bigg( \int_{Z}  \bigg[ \int_Z   \| D^2_{\xi_1,\xi_2}F\|^2_{2p}  \,  \fm(d\xi_2) \bigg]^p
  \fm(d\xi_1) \bigg)^{\frac1p}
\\
 \gamma_3&:= 2 \sigma^{-(q+1)}\int_Z \| D_\xi F \|_{q+1}^{q+1}  \, \fm(d\xi)
\\
 \gamma_4&:= 2^{\frac{2}{p}} \sigma^{-2} \bigg( \int_Z \| D_\xi F \|_{2p}^{2p}  \, \fm(d\xi) \bigg)^{\frac1p}
\\
 \gamma_5&:= (4p)^{\frac1p} \sigma^{-2} \bigg( \int_{Z^2} \| D^2_{\xi_1,\xi_2}  F \|_{2p}^{2p}
\, \fm(d\xi_1)\fm(d\xi_2)  \bigg)^{\frac1p}
\\
 \gamma_6&:=(2^{2+p}p)^{\frac1p} \sigma^{-2} \bigg( \int_{Z^2} \| D^2_{\xi_1,\xi_2}  F \|_{2p}^{p}
 \| D_{\xi_1}F \|_{2p}^p
\, \fm(d\xi_1)\fm(d\xi_2)  \bigg)^{\frac1p}\\
 \gamma_7&:=  \frac{  (8p)^{\frac1p} }{\sigma^2}
 \bigg( \int_{Z^2} \| D^2_{\xi_1,\xi_2}  F \|_{2p}
 \| D_{\xi_1}F \|_{2p}  \| D_{\xi_2}F \|_{2p}^{2(p-1)}
\, \fm(d\xi_1) \fm(d\xi_2)  \bigg)^{\frac1p}.
\end{aligned}
\label{gamma17}
\end{align}
\end{proposition}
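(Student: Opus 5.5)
This is Theorem 3.4 of \cite{trauthwein25}, which we use as a black box. For completeness, here is how I would reprove it with the Malliavin--Stein method on Poisson space. Set $G_\xi:=-D_\xi L^{-1}F$, where $L^{-1}$ is the pseudo-inverse of the Ornstein--Uhlenbeck generator, i.e.\ $L^{-1}F=-\sum_{n\ge1}\frac1n I_n(f_n)$.

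\emph{Wasserstein and Fortet--Mourier bounds.}
\begin{itemize}
\item Fix a test function $h$ with ${\rm Lip}(h)\le 1$, and let $g=g_h$ solve the Stein equation. Then $\|g'\|_\infty\le\sqrt{2/\pi}$ and $\|g''\|_\infty\le 2$.
\item Since $F=\delta(-DL^{-1}F)$, duality gives
\[
\bE[Fg(F)]=\bE\int_Z D_\xi g(F)\, G_\xi\,\fm(d\xi).
\]
\item On Poisson space the chain rule is a difference operator. Hence $D_\xi g(F)=g'(F)D_\xi F+R_\xi$, with $|R_\xi|\le \min\big(2\|g'\|_\infty|D_\xi F|,\tfrac12\|g''\|_\infty|D_\xi F|^2\big)$. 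Interpolating the two bounds gives $|R_\xi|\les |D_\xi F|^{q}$ for any $q\in(1,2]$.
\item This yields
\[
d_W(F/\sigma,Z)\le \sqrt{2/\pi}\,\bE\big|1-\sigma^{-2}\langle DF,G\rangle_\cH\big| + c\,\sigma^{-(q+1)}\int_Z\bE\big[|D_\xi F|^q|G_\xi|\big]\fm(d\xi).
\]
\item For the second term I would use Mehler's formula, which represents $G_\xi$ as an average of $D_\xi F$ under the thinned-Poisson semigroup. This gives $\|G_\xi\|_r\le\|D_\xi F\|_r$. Hölder's inequality then produces $\gamma_3$.
\item For the first term, note that $\bE\langle DF,G\rangle_\cH=\sigma^2$. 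Instead of the variance, I would apply the $L^p$-Poincaré inequality for $p\in(1,2]$:
\[
\bE|Y-\bE Y|^p\le 2^{2-p}\int_Z\bE|D_\xi Y|^p\,\fm(d\xi),
\]
with $Y=\langle DF,G\rangle_\cH$.
\item The product rule $D(UV)=UDV+VDU+DU\,DV$ expresses $D_{\xi_1}Y$ through $D^2_{\xi_1,\xi_2}F$, $D_{\xi_2}F$ and $G$. Mehler again gives $\|D_{\xi_1}G_{\xi_2}\|_{r}\le\frac12\|D^2_{\xi_1,\xi_2}F\|_r$. Minkowski and Hölder with exponents $2p,2p$ then yield exactly $\gamma_1$ and $\gamma_2$.
\end{itemize}
Finally, $d_{FM}\le d_W$ is immediate, because the Fortet--Mourier class of test functions is contained in the Wasserstein class.

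\emph{Kolmogorov bound.} This is the step I expect to be the main obstacle. For $h=1_{(-\infty,x]}$ the Stein solution $g_x$ is only Lipschitz, with a jump in $g_x'$, so the second-order Taylor bound fails. I would proceed as follows.
\begin{itemize}
\item Write $R_\xi=\int_0^{D_\xi F}\big(g_x'(F+s)-g_x'(F)\big)ds$.
\item Use the Stein identity $g_x'(w)=wg_x(w)+1_{\{w\le x\}}-\Phi(x)$ to reduce the remainder to two pieces: the term $\bE[|D_\xi F|^2|G_\xi|\,|F|]$, and expectations of the form $\bE\big[D_\xi F\,|G_\xi|\,1_{\{x-|D_\xi F|<F\le x\}}\big]$.
\item The indicator terms are handled in the Eichelsbacher--Thäle/Schulte style. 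One writes them as $\bE[\langle D(1_{\{F>x\}}),\,|DF|\,|G|\rangle_\cH]$ and applies duality, which brings in $\delta\big(DF\,|DF|\,|G|\big)$. Its $L^2$ or $L^p$ norm is then bounded via the $L^p$-Poincaré inequality and the product rule.
\item Expanding the product rule generates the three mixed integrals $\gamma_5,\gamma_6,\gamma_7$. The leading term $\int_Z\|D_\xi F\|_{2p}^{2p}\fm(d\xi)$ gives $\gamma_4$.
\end{itemize}
The first-order term is unchanged and is bounded by $\gamma_1+\gamma_2$ exactly as before, since $\|g_x'\|_\infty\le 1$; the factor $\sqrt{\pi/2}$ simply undoes the normalization $\sqrt{2/\pi}$ used in the Wasserstein case.

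The delicate points are tracking the constants and keeping every exponent at the level $2p$ (rather than $4$) when applying Hölder to the products. If the duality step cannot be closed at that level, I would fall back on a smoothing argument for the indicator, which would cost a power in the rate.
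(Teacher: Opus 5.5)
Invoking Theorem 3.4 of \cite{trauthwein25} as a black box is exactly what the paper does; it gives no proof of its own. So, as a justification within this paper, your proposal is complete. Your reconstruction of the $d_{\rm W}$ bound is also sound. With your $G=-DL^{-1}F$, Mehler gives $\|G_\xi\|_r\le\|D_\xi F\|_r$ and $\|D_{\xi_1}G_{\xi_2}\|_r\le\frac12\|D^2_{\xi_1,\xi_2}F\|_r$. Combined with the $p$-Poincaré inequality and Minkowski's inequality in $L^p(\fm(d\xi_1))$, this yields $\gamma_1,\gamma_2,\gamma_3$. If the $p$-Poincaré constant is $2^{2-p}$ as you wrote, your constants are even no larger than the stated ones.

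Your sketch of the Kolmogorov bound, however, has a genuine gap.

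(i) You write $g_x'(w)=wg_x(w)+1_{\{w\le x\}}-\Phi(x)$ and use a Lipschitz-type bound on $wg_x(w)$. This leaves terms $\bE\int_Z(|F|+c)\,|D_\xi F|^2|G_\xi|\,\fm(d\xi)$, which $\gamma_4,\ldots,\gamma_7$ cannot control when $2p<3$. Take $F=I_1(f)$ with $f\in L^2(\fm)\cap L^{2p}(\fm)$ but $f\notin L^3(\fm)$. Then $D^2F=0$, so the claimed bound reduces to $\gamma_4<\infty$, while $\bE|F|\int_Z|f|^3\,d\fm=\infty$.

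(ii) The indicator term must be paired with the signed derivative, because $D_\xi F\,D_\xi 1_{\{F>x\}}\ge 0$. This produces the quadratic integrand $DF\,|G|$. Your cubic $\delta(DF\,|DF|\,|G|)$ would instead require moments of order $3p$.

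The missing idea is to treat $wg_x(w)$ exactly like the indicator. It is nondecreasing with $|wg_x(w)|\le 1$, and $1_{\{w\le x\}}$ is nonincreasing. Set $\Psi_x(w)=wg_x(w)-1_{\{w\le x\}}$. Then $|R_\xi|\le D_\xi F\,D_\xi\Psi_x(F)$, and duality gives $\bE\int_Z|R_\xi||G_\xi|\,\fm(d\xi)\le\bE[\Psi_x(F)\,\delta(DF\,|G|)]\le 2\|\delta(DF\,|G|)\|_p$, with no $|F|$-term at all.

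(iii) You then need an $L^p$ moment estimate for Skorohod integrals with $p\in(1,2]$; this is the main new tool of \cite{trauthwein25}. It does not follow from the $p$-Poincaré inequality plus the product rule, because $D_\xi\delta(V)=V(\xi)+\delta(D_\xi V)$ simply reintroduces a Skorohod integral. The $L^2$ isometry route forces fourth moments, as in \cite{LPS16}, and smoothing gives a strictly weaker rate. If the leading term of such an estimate is $2^{2-p}\int_Z\bE|V(\xi)|^p\,\fm(d\xi)$, it reproduces $\gamma_4$ exactly, since $2\cdot 2^{(2-p)/p}=2^{2/p}$.

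Note also that $\gamma_7$ cannot arise from the product rule, contrary to your claim. The derivative $D_{\xi_2}V(\xi_1)$ involves only $D^2_{\xi_1,\xi_2}F$, $D_{\xi_1}F$, $G_{\xi_1}$ and $D_{\xi_2}G_{\xi_1}$. The factor $\|D_{\xi_2}F\|_{2p}^{2(p-1)}$ must therefore come from a cross term containing $|V(\xi_2)|^{p-1}$ in the Skorohod estimate.
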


{\em Proof of Theorem \ref{QCLT}:} We apply Proposition \ref{tara} to $F=F_{R}(t)$. 

We recall that $p \in (1,2]$ is such that

\[
m_p<\infty \quad \mbox{and} \quad m_{2p}<\infty.
\]

By Minkowski's inequality, Theorem \ref{key-thD} and the fact that $G_{t-r}(x)\leq G_t(x)$, for any $p'\geq 2$ such that $m_{p'}<\infty$, and for any $r \in [0,t]$, $y \in \bR$ and $z\in \bR_0$
\begin{equation}
\label{Du}
\|D_{r,y,z}F_R(t)\|_{p'} \leq \int_{-R}^R \|D_{r,y,z}u(t,x)\|_{p'}dx \les |z|
\int_{-R}^{R}\int_{\bR}G_{t}(x-y')\k(y-y')dy'dx. %=C |z| \big(\varphi_{t,R} * \k\big)(y).
\end{equation}
Below, we will use this estimate for $p'=2p$ and $p'=q+1$ where $q$ is given by \eqref{def-q}.

\medskip

Similarly, by Minkowski's inequality and Theorem \ref{key-thD2}, for any $p'\geq 2$ such that $m_{p'}<\infty$, and for any $r_1,r_2 \in [0,t]$, $y_1,y_2 \in \bR$ and $z_1,z_2 \in \bR_0$,
\begin{align}
\nonumber
& \|D_{(r_1,y_1,z_1),(r_2,y_2,z_2)}^2 F_R(t)\|_{p'} \leq \int_{R}^{R}\|D_{(r_1,y_1,z_1),(r_2,y_2,z_2)}^2u(t,x)\|_{p'}dx \\
\label{D2u}
& \quad \les |z_1 z_2|
\int_{-R}^R \int_{\bR^2}G_{2t}(x-y_1')G_{2t}(x-y_2')\k(y_1-y_1')\k(y_2-y_2')dy_1'dy_2'dx,
\end{align}
where for the last inequality, we used the fact that
\begin{align*}
\widetilde{f}_2(r_1,y_1,r_2,y_2,t,x) \leq G_{2t}(x-y_1)G_{2t}(x-y_2).
\end{align*}
To see this, we note that if $r_1<r_2<t$
\begin{align*}
& G_{t-r_2}(x-y_2)G_{r_2-r_1}(y_2-y_1)  \leq G_{t}(x-y_2)G_{t}(y_2-y_1)=\frac{1}{4}1_{\{|x-y_2|<t\}} 1_{\{|y_2-y_1|<t\}}\\
& \quad \leq \frac{1}{4} 1_{\{|x-y_2|<t\}} 1_{\{|x-y_1|<2t\}} \leq \frac{1}{4} 1_{\{|x-y_2|<2t\}} 1_{\{|x-y_1|<2t\}}=G_{2t}(x-y_1)G_{2t}(x-y_2).
\end{align*}

Moreover, since
\begin{align*}
\widetilde{f}_2(r_1,y_1,r_2,y_2,t,x) \leq \frac{1}{2} \big[G_{t}(x-y_1)G_{t}(y_1-y_2)+
G_{t}(x-y_2)G_{t}(y_2-y_1)\big]
\end{align*}
for any $p'\geq 2$ such that $m_{p'}<\infty$, and for any $r_1,r_2 \in [0,t]$, $y_1,y_2 \in \bR$ and $z_1,z_2 \in \bR_0$,
\begin{align}
\label{D2u-1}
& \|D_{(r_1,y_1,z_1),(r_2,y_2,z_2)}^2 F_R(t)\|_{p'}  \les |z_1 z_2|
\int_{-R}^R \int_{\bR^2}G_{t}(x-y_1')G_{t}(y_1'-y_2')\k(y_1-y_1')\k(y_2-y_2')dy_1'dy_2'dx.
\end{align}

\medskip

{\bf Case 1.} Assume that \fbox{$\k \in L^1(\bR)$}. In this case, by Theorem \ref{cov-th}, $\sigma_R^2(t) \sim  R$. 

\medskip

We estimate separately $\gamma_1,\ldots,\gamma_7$. We denote $\xi_i=(r_i,y_i,z_i)$ for $i=1,2$, and we use the fact that $D_{r,y,z}F_R(t)=0$ if $r>t$.
%since $F_R(t)$ is $\cF_t$-measurable. 
We use the estimates given by \eqref{Du} and \eqref{D2u}, which are valid for all $r,r_1,r_2 \in [0,t]$. 

We begin by developing some estimates which will be useful for bounding all the $\gamma_i$'s.
% We recall that $m_p$ is defined by \eqref{def-mp}.

%We will use several times H\"older's inequality: for any finite measure space %$(X,\mathcal{X},\mu)$, for any measurable function $f:X \to \bR_{+}$ and $p\geq 1$, 
%\begin{equation}
%\label{Holder}
%\left(\int_{X}f(x)\mu(dx)\right)^p \leq [\mu(X)]^{p-1}\int_X f^p(x)\mu(dx).
%\end{equation}

%Applying inequality \eqref{Holder} to the measure $\mu(dx)=1_{\{|x-y|<t\}}dx$ on $\bR$, we %obtain:

Fix $(r,y,z) \in Z$. By applying \eqref{Du} and integrating with respect to $x$ and then $y'$ we obtain the following rough bound: for any $p'\geq 2$ such that $m_{p'}<\infty$,
\begin{equation}
\label{Du trivial}
\|D_{r,y,z}F_R(t)\|_{p'} \les |z|t\|\k\|_{L^1(\bR)} \les |z|.
\end{equation}

Fix $(r_1,y_1,z_1),(r_2,y_2,z_2) \in Z$. Likewise, by applying \eqref{D2u}, using the fact that $G_{2t}(x-y_2') \leq 1$, and integrating with respect to $x$, $y_1'$ and $y_2'$ (in this order), we obtain: for any $p'\geq 2$ such that $m_{p'}<\infty$,
\begin{align}
\nonumber
& \|D_{(r_1,y_1,z_1),(r_2,y_2,z_2)}^2 F_R(t)\|_{p'} \\
\nonumber
& \les |z_1 z_2| \int_{\bR^2} \int_{-R}^R G_{2t}(x-y_1')\k(y_1-y_1')\k(y_2-y_2')dxdy_1'dy_2' \\
\label{D2u trivial}
& \les |z_1z_2|2t \|\k\|_{L^1(\bR)}^2 \les |z_1z_2|.
\end{align}
Fix $(r_1,y_1,z_1) \in Z$. Let $i \in \bR$, $j \geq 1$ and suppose $m_{i+j} < \infty$. Using \eqref{D2u trivial} and \eqref{D2u}, we obtain: for any $p'\geq 2$ such that $m_{p'}<\infty$,
\begin{align}
\nonumber
& \int_{\bR_0} \int_{\bR} \int_0^t  |z_2|^i\| D^2_{(r_1,y_1,z_1),(r_2,y_2,z_2)} F_R(t) \|_{p'}^j \, dr_2 dy_2 \nu(dz_2)\\
\nonumber
& = \int_{\bR_0} \int_{\bR} \int_0^t  |z_2|^i\| D^2_{(r_1,y_1,z_1),(r_2,y_2,z_2)} F_R(t) \|_{p'}\| D^2_{(r_1,y_1,z_1),(r_2,y_2,z_2)} F_R(t) \|_{p'}^{j-1} \, dr_2 dy_2 \nu(dz_2)\\
\nonumber
& \les\int_{\bR_0} \int_{\bR} \int_0^t   |z_1|^{j-1}  |z_2|^{i+ j-1}\| D^2_{(r_1,y_1,z_1),(r_2,y_2,z_2)} F_R(t) \|_{p'} \, dr_2 dy_2 \nu(dz_2)\\
\nonumber
& \les t|z_1|^j \int_{\bR_0} \int_{\bR}  |z_2|^{i+j} \int_{\bR^2}\int_{-R}^R G_{2t}(x-y_1')G_{2t}(x-y_2')\k(y_1-y_1')\k(y_2-y_2') \, dx dy_1'dy_2'  dy_2 \nu(dz_2).
\end{align}
Integrating with respect to $y_2$ and then $y_2'$, gives: % $z_2$ and then $r_2$, 
\begin{align}
\nonumber
& \int_{\bR_0} \int_{\bR} \int_0^t  |z_2|^i\| D^2_{(r_1,y_1,z_1),(r_2,y_2,z_2)} F_R(t) \|_{p'}^j \, dr_2 dy_2 \nu(dz_2)\\
\nonumber
& \les  |z_1|^j m_{i+j}2t^2 \| \k\|_{L^1(\bR)}   \int_{\bR} \int_{-R}^R G_{2t}(x-y_1')\k(y_1-y_1')  \, dxdy_1'  \\
\label{D2u v2}
& \les  |z_1|^j m_{i+j} (\varphi_{2t,R} * \k)(y_1).
\end{align}

Let $i \in \bR$, $j \geq 1$ and suppose $m_{i+j} < \infty$. Using \eqref{Du trivial}, \eqref{Du} and \eqref{young-phi}, we obtain: for any $p'\geq 2$ such that $m_{p'}<\infty$,
\begin{align}
\nonumber
& \int_{\bR_0} \int_{\bR} \int_0^t |z|^i\| D_{(r,y,z)} F_R(t) \|_{p'}^j \, dr dy \nu(dz) \\
\nonumber
& = \int_{\bR_0} \int_{\bR} \int_0^t |z|^i\| D_{(r,y,z)} F_R(t) \|_{p'}\| D_{(r,y,z)} F_R(t) \|_{p'}^{j-1} \, dr dy \nu(dz) \\
\nonumber
& \les \int_{\bR_0} \int_{\bR} \int_0^t |z|^{i+j-1}\| D_{(r,y,z)} F_R(t) \|_{p'} \, dr dy \nu(dz) \\
\nonumber
& \les \int_{\bR_0} \int_{\bR} \int_0^t |z|^{i+j} \int_{-R}^R \int_{\bR}G_{t}(x-y') \k(y-y')dy'dx \, dr dy \nu(dz) \\
\nonumber
& \les m_{i+j} t \| \varphi_{2t,R} * \k \|_{L^1(\bR)} \\
\nonumber
& \les  m_{i+j} 2t^2 \|\k\|_{L^1(\bR)}R \\
\label{Du v2}
& \les m_{i+j} R.
\end{align}

\begin{itemize}
\item{\bf Estimation of $\gamma_1$.} Applying \eqref{Du trivial}, followed by \eqref{D2u v2} with $i = j = 1$, and then \eqref{young-phi},
\begin{align*}
\gamma_1^p & \les \frac{1}{R^p} \int_Z \left( \int_Z \|D_{\xi_2}F_R(t) \|_{2p}
\|D^2_{\xi_1,\xi_2} F_R(t) \|_{2p}\fm(d\xi_2)\right)^{p}\fm(d\xi_1)\\
& \les \frac{1}{R^p} \int_{\bR_0}\int_{\bR} \int_{0}^t \left( \int_{\bR_0} \int_{\bR} \int_{0}^t |z_2|
\|D^2_{(r_1,y_1,z_1),(r_2,y_2,z_2)} F_R(t) \|_{2p} \, dr_2 dy_2 \nu(dz_2)\right)^{p} \, dr_1 dy_1 \nu(dz_1)\\
& \les \frac{1}{R^p}  \int_{\bR_0} \int_{\bR} \int_{0}^t \left(m_2|z_1| (\varphi_{2t,R} * \k)(y_1) \right)^p dr_1 dy_1 \nu(dz_1)\\
& = \frac{1}{R^p} m_2^p m_p t \| \varphi_{2t,R} * \k \|_{L^p(\bR)}^p \\
& \les \frac{R}{R^p} m_2^p m_p\| \k\|_{L^1(\bR)}^p \\ 
& \les R^{1-p}.
\end{align*}

\item {\bf Estimation of $\gamma_2$.} Applying \eqref{D2u v2} with $i = 0$ and $j = 2$, followed by \eqref{young-phi},
\begin{align*}
\gamma_2^p & \les \frac{1}{R^p} \int_Z \left( \int_Z
\|D^2_{\xi_1,\xi_2} F_R(t) \|_{2p}^2\fm(d\xi_2)\right)^{p}\fm(d\xi_1)\\
& \les \frac{1}{R^p}  \int_{\bR_0} \int_{\bR} \int_{0}^t \left(m_2|z_1|^2 (\varphi_{2t,R} * \k)(y_1) \right)^p dr_1 dy_1 \nu(dz_1)\\
& = \frac{1}{R^p} m_2^p m_{2p} t \| \varphi_{2t,R} * \k \|_{L^p(\bR)}^p \\
& \les \frac{R}{R^p} m_2^p m_{2p} \| \k\|_{L^1(\bR)}^p \\ 
& \les R^{1-p}.
\end{align*}

\item {\bf Estimation of $\gamma_3$.} Applying \eqref{Du v2} with $i=0$ and $j=q+1$,
\begin{align*}
\gamma_3 & \les \frac{1}{R^{\frac{q+1}{2}}} \int_Z \|D_{\xi}F_R(t) \|^{q+1}_{q+1} \fm(d\xi)\\
& \les m_{q+1} R^{\frac{1-q}{2}}.
\end{align*}

We take
\begin{equation}
\label{def-q}
q=\left\{
\begin{array}{ll} 
2p-1 & \mbox{if $p \in (1,\frac{3}{2}]$} \\
2 & \mbox{if $p \in (\frac{3}{2},2]$}
\end{array} \right. .
\end{equation}
Then
\[
m_{q+1}=\left\{
\begin{array}{ll} 
m_{2p} & \mbox{if $p \in (1,\frac{3}{2}]$} \\
m_3 & \mbox{if $p \in (\frac{3}{2},2]$}
\end{array} \right. <\infty \quad \mbox{and} \quad 
R^{\frac{1-q}{2}}=\left\{
\begin{array}{ll} 
R^{1-p} & \mbox{if $p \in (1,\frac{3}{2}]$} \\
R^{-\frac{1}{2}} & \mbox{if $p \in (\frac{3}{2},2]$}
\end{array} \right. \leq  R^{\frac{1-p}{p}}.
\]

Hence, 
\[
\gamma_3 \les R^{-(1-\frac{1}{p})}.
\]

\item {\bf Estimation of $\gamma_4$.} Applying \eqref{Du v2} with $i=0$ and $j=2p$,
\begin{align*}
\gamma_4^{p} & \les \frac{1}{R^p} \int_Z \|D_{\xi}F_R(t) \|^{2p}_{2p} \fm(d\xi)\\
& \les m_{2p}R^{1-p} \\
& \les R^{1-p}.
\end{align*}

\item {\bf Estimation of $\gamma_5$.} Applying \eqref{D2u v2} with $i = 0$ and $j = 2p$, 
\begin{align*}
\gamma_5^p & \les \frac{1}{R^p} \int_{Z^2}
\|D^2_{\xi_1,\xi_2} F_R(t) \|^{2p}_{2p}\fm(d\xi_2)\fm(d\xi_1)\\
& \les \frac{1}{R^p}  \int_{\bR_0} \int_{\bR} \int_{0}^t m_{2p}|z_1|^{2p} (\varphi_{2t,R} * \k)(y_1) dr_1 dy_1 \nu(dz_1)\\
& \les \frac{1}{R^p} m_{2p}^2 t  \| \varphi_{2t,R} * \k \|_{L^{1}(\bR)} \\
& \les \frac{R}{R^p} m_{2p}^2  \|\k\|_{L^1(\bR)} \\
& \les R^{1-p}.
\end{align*}

\item {\bf Estimation of $\gamma_6$.} Applying \eqref{Du trivial}, followed by the symmetric analogue of \eqref{D2u v2} with $i = j = p$, for the integral with respect to $(r_1,y_1,z_1)$, and finally \eqref{young-phi},
\begin{align*}
  \gamma_6^p & \les \frac{1}{R^p}  \int_{Z^2} \| D^2_{\xi_1,\xi_2} F_R(t) \|_{2p}^{p}
 \| D_{\xi_1}F_R(t) \|_{2p}^p
\, \fm(d\xi_1)\fm(d\xi_2) \\
& \les \frac{1}{R^p} \int_{\bR_0} \int_{\bR} \int_0^t \int_{\bR_0} \int_{\bR} \int_0^t |z_1|^p \| D^2_{(r_2,y_2,z_2),(r_1,y_1,z_1)}  F_R(t) \|_{2p}^{p}
\, dr_1 dy_1 \nu(dz_1) dr_2 dy_2 \nu(dz_2) \\
& \les \frac{1}{R^p}  \int_{\bR_0} \int_{\bR} \int_{0}^t m_{2p}|z_2|^{p} (\varphi_{2t,R}*\k)(y_2) dr_2 dy_2 \nu(dz_2) \\
& = \frac{1}{R^p} m_{2p}m_p t \| \varphi_{2t,R} * \k \|_{L^{1}(\bR)} \\
& = \frac{R}{R^p} m_{2p}m_p  \|\k\|_{L^1(\bR)} \\
& \les R^{1-p}.
\end{align*}

\item {\bf Estimation of $\gamma_7$.} Applying \eqref{Du trivial}, followed by the symmetric analogue of \eqref{D2u v2} with $i = j = 1$, for the integral with respect to $(r_1,y_1,z_1)$, and finally \eqref{young-phi},
\begin{align*}
  \gamma_7^p & \les \frac{1}{R^p}  \int_{Z^2} \| D^2_{\xi_1,\xi_2}  F_R(t) \|_{2p}
 \| D_{\xi_1}F_R(t) \|_{2p}\| D_{\xi_2}F_R(t) \|_{2p}^{2(p-1)}
\, \fm(d\xi_1)\fm(d\xi_2) \\
& \les \frac{1}{R^p}  \int_{\bR_0} \int_{\bR} \int_{0}^t |z_2|^{2(p-1)}\int_{\bR_0} \int_{\bR} \int_{0}^t|z_1| \| D^2_{(r_2,y_2,z_2),(r_1,y_1,z_1)}  F_R(t) \|_{2p}
\, dr_1 dy_1 \nu(dz_1) dr_2 dy_2 \nu(dz_2) \\
& \les \frac{1}{R^p}  \int_{\bR_0} \int_{\bR} \int_{0}^t m_{2}|z_2|^{2p-1} ( \varphi_{2t,R} * \k)(y_2) dr_2 dy_2 \nu(dz_2) \\
& = \frac{1}{R^p} m_2m_{2p-1} t \| \varphi_{2t,R} * \k \|_{L^1(\bR)} \\
& = \frac{R}{R^p} m_2m_{2p-1} 2t^2 \|\k\|_{L^1(\bR)} \\
& \les  R^{1-p}.
\end{align*}
For the last line, since $p \in (1,2]$, we have $p < 2p-1 < 2p$ and because $m_p,m_{2p} < \infty$ we conclude that $m_{2p-1} < \infty$.
\end{itemize}

\medskip

{\bf Case 2.} Assume that \fbox{$k=R_{1,\alpha/2}$} for some $\alpha \in (0,1)$. By Theorem \ref{cov-th}, $\sigma_R^2(t) \sim  R^{\alpha+1}$. 
%Let $p \in (1,2]$ be such that $m_p<\infty$ and $m_{2p}<\infty$.
Recall that in this case, we assume that 
$p>\frac{2}{2-\alpha}$.

\begin{itemize}

\item {\bf Estimation of $\gamma_1$.} Applying \eqref{Du} and \eqref{D2u-1}, we have:
\begin{align*}
\gamma_1^p & \les \frac{1}{R^{(\alpha+1)p}} \int_Z \left( \int_Z \|D_{\xi_2}F_R(t) \|_{2p}
\|D^2_{\xi_1,\xi_2} F_R(t) \|_{2p}\fm(d\xi_2)\right)^{p}\fm(d\xi_1)\\
& \les \frac{1}{R^{(\alpha+1)p}} \int_{\bR_0}\int_{\bR}  \Bigg[ \int_{\bR_0} \int_{\bR}  \left(|z_2| \int_{-R}^R \int_{\bR} G_t(x_2-y_2'') \k(y_2-y_2'') dy_2'' dx_2\right) \\
& \quad \quad
\left(|z_1 z_2| \int_{-R}^R \int_{\bR^2} G_t(x_1-y_1')G_t(y_1'-y_2') \k(y_1-y_1')\k(y_2-y_2') dy_1' dy_2' dx_1 \right) \\
& \qquad \qquad \qquad \qquad \qquad  dy_2 \nu(dz_2)\Bigg]^{p} \, dy_1 \nu(dz_1)\\
& \les \frac{m_2^p m_p}{R^{(\alpha+1)p}} \int_{\bR} \Bigg[ \int_{\bR^4} \int_{[-R,R]^2} G_t(x_1-y_1') G_t(y_1'-y_2') G_t(x_2-y_2'') \\
& \quad \quad \quad \quad \k(y_1-y_1') \k(y_2-y_2') \k(y_2-y_2'') dx_1 dx_2 dy_1' dy_2' dy_2''  dy_2\Bigg]^p dy_1\\
& = \frac{m_2^p m_pt^{p+1}}{R^{(\alpha+1)p}} \int_{\bR} \Bigg[ \int_{\bR^3} \varphi_{t,R}(y_1') G_t(y_1'-y_2') \varphi_{t,R}(y_2'') \k(y_1-y_1') f(y_2'-y_2'') dy_1' dy_2' dy_2'' \Bigg]^p dy_1,
\end{align*}
where for the last line we used definition \eqref{def-phi-tR} of $\varphi_{t,R}$ and the fact that $f=\k *\k$.

We now apply Theorem \ref{HLS} to $\k=R_{1,\alpha/2}$. For any $\varphi \in L^{q_1}(\bR)$, we have:
\begin{equation}
\label{HLS-gam1}
\int_{\bR} \left( \int_{\bR} \varphi(y')\k(y-y')dy'\right)^p dy \leq C \left(\int_{\bR} |\varphi(y')|^{q_1}dy' \right)^{p/q_1},
\end{equation}
where $C>0$ is a constant depending on $(\alpha,p)$, with

\begin{equation}
\label{def-q1}
\frac{1}{q_1}=\frac{1}{p}+\frac{\alpha}{2} \quad \mbox{and} \quad p>\frac{2}{2-\alpha}.
\end{equation} 

We apply this inequality to the function
\[
\varphi(y_1')= \varphi_{t,R}(y_1')\int_{\bR^2} G_t(y_1'-y_2') \varphi_{t,R}(y_2'')  f(y_2'-y_2'') dy_2' dy_2''.
\]
We obtain that
\begin{align*}
\gamma_1^p & \les 
%\frac{1}{R^{(\alpha+1)p}} \Bigg[ \int_{\bR} \left( \int_{\bR^2} \varphi_{t,R}(y_1') G_t(y_1'-y_2') %\varphi_{t,R}(y_2'')  f(y_2'-y_2'') dy_2' dy_2'' \right)^q dy_1' \Bigg]^{p/q}\\
 \frac{1}{R^{(\alpha+1)p}} \Bigg[ \int_{\bR} \varphi_{t,R}^{q_1}(y_1') \left( \int_{\bR^2}  G_t(y_1'-y_2') \varphi_{t,R}(y_2'')  f(y_2'-y_2'') dy_2' dy_2'' \right)^{q_1} dy_1' \Bigg]^{p/q_1}.
\end{align*}
For the inner integral, we apply Lemma \ref{Hol-HLS} to $f=R_{1,\alpha}$. Let

\begin{equation}
\label{def-rs}
\frac{1}{r}+\frac{1}{s}=1+\alpha \quad \mbox{and} \quad 1 < r <\frac{1}{\alpha}.
\end{equation}

Then
\[
\int_{\bR^2}  G_t(y_1'-y_2') \varphi_{t,R}(y_2'')  f(y_2'-y_2'') dy_2' dy_2''  \leq C \|G_t\|_{L^r(\bR)}\|\varphi_{t,R}\|_{L^s(\bR)} \les R^{1/s}
\]
where for the last inequality we used \eqref{norm-phi}, and the fact that $\|G_t\|_{L^r(\bR)}=t^{1/r}$.
Using the above and \eqref{norm-phi} again,  we infer that
\begin{align*}
\gamma_1^p & \les \frac{1}{R^{(\alpha+1)p}} R^{p/s} \left( \int_{\bR}\varphi_{t,R}^{q_1}(y_1') dy_1' \right)^{p/q_1} \les \frac{1}{R^{(\alpha+1)p}} R^{p/s}  \|\varphi_{t,R}\|_{L^q(\bR)}^p \les \frac{1}{R^{(\alpha+1)p}} R^{p/s} R^{p/q_1}.
\end{align*}
Hence,
\[
\gamma_1^p \les R^{-p\big(\alpha+1-\frac{1}{s}-\frac{1}{q_1}\big)}.
\]
To calculate the exponent, we use the definitions of $s$ and $q_1$:
\[
\alpha+1-\frac{1}{s}-\frac{1}{q_1}=\alpha+1-\left(1+\alpha-\frac{1}{r} \right)-\left(\frac{1}{p}+\frac{\alpha}{2} \right)=\frac{1}{r}-\frac{1}{p}-\frac{\alpha}{2}.
\]

We  choose $r>1$ such that $\frac{1}{r}>\max\big(\alpha,\frac{1}{p}+\frac{\alpha}{2}\big)=\frac{1}{p}+\frac{\alpha}{2}=\frac{2+\alpha p}{2p}$. We obtain that
\[
\gamma_1 \les R^{-\big(\frac{1}{r}-\frac{1}{p}-\frac{\alpha}{2} \big)} \quad \mbox{for any $r \in \big(1, \frac{2p}{2+\alpha p}\big)$.}
\]
This range of $r$ is non-empty since $p>\frac{2}{2-\alpha}$.

\item {\bf Estimation of $\gamma_2$.} Using \eqref{D2u}, we have:
\begin{align*} 
\gamma_2^p & \les \frac{1}{R^{(\alpha+1)p}} \int_{Z} \left( \int_{Z} \|D_{\xi_1,\xi_2} F_R(t)\|_{p}^2 \fm(d\xi_2) \right)^p \fm(d\xi_1) \\
& \les  \frac{1}{R^{(\alpha+1)p}}  \int_{\bR_0}\int_{\bR} \Bigg[\int_{\bR} \int_{\bR_0} |z_1 z_2|^2 \Big(\int_{-R}^R \int_{\bR^2}
G_{2t}(x-y_1') G_{2t}(x-y_2') \k(y_1-y_1') \\
& \qquad \qquad \quad \qquad \qquad \k(y_2-y_2') dy_1'dy_2' dx \Big)^2 dy_2 \nu(dz_2)  \Bigg]^p dy_1 \nu(dz_1)  \\
& \les \frac{m_2^p m_{2p}}{R^{(\alpha+1)p}} \int_{\bR} \Bigg[ \int_{\bR} \int_{[-R,R]^2} \int_{\bR^4}
G_{2t}(x_1-y_1') G_{2t}(x_1-y_2')\k(y_1-y_1') \k(y_2-y_2')\\
& \quad G_{2t}(x_2-y_1'') G_{2t}(x_2-y_2'')\k(y_1-y_1'') \k(y_2-y_2'')dy_1'dy_2' dy_1'' dy_2'' dx_1 dx_2 dy_2 
\Bigg]^p dy_1 \\
& = C \frac{1}{R^{(\alpha+1)p}}  \int_{\bR} \Bigg[  \int_{[-R,R]^2} \int_{\bR^2} G_{2t}(x_1-y_1') G_{2t}(x_2-y_1'')
\k(y_1-y_1') \\
& \quad \k(y_1-y_1'') \Big(\int_{\bR^2} G_{2t}(x_1-y_2') G_{2t}(x_2-y_2'') f(y_2'-y_2'')dy_2' dy_2'' \Big)dy_1'  dy_1'' dx_1 dx_2 
\Bigg]^p dy_1.
\end{align*}
For the inner integral, we apply Lemma \ref{Hol-HLS} to $f=R_{1,\alpha}$, with $r,s$ given by \eqref{def-rs}:
\[
\int_{\bR^2} G_{2t}(x_1-y_2') G_{2t}(x_2-y_2'') f(y_2'-y_2'')dy_2' dy_2'' \leq C \|G_{2t}\|_{L^r(\bR)} \|G_{2t}\|_{L^s(\bR)} \leq C.
\]
Hence,
\begin{align*}
\gamma_2^p & \les \frac{1}{R^{(\alpha+1)p}} \int_{\bR} \Bigg(  \int_{-R}^R \int_{\bR} G_{2t}(x-y_1')
\k(y_1-y_1')dy_1'  dx \Bigg)^{2p} dy_1\\
& =C  \frac{1}{R^{(\alpha+1)p}}  \|\varphi_{2t,R}*\k\|_{L^{2p}(\bR)}^{2p} \les \frac{1}{R^{(\alpha+1)p}}  \|\varphi_{2t,R}\|_{L^{q_2}(\bR)}^{2p},
\end{align*}
where for the last inequality we applied Theorem \ref{HLS} to $\k=R_{1,\alpha/2}$, with
\begin{equation}
\label{def-q2}
\frac{1}{q_2}=\frac{1}{2p}+\frac{\alpha}{2} \quad \mbox{and} \quad p>\frac{1}{2-\alpha}.
\end{equation}
Using \eqref{norm-phi}, we obtain that 
$\gamma_2^p \les \frac{1}{R^{(\alpha+1)p}} R^{\frac{2p}{q_2}}=C R^{1-p}$. Hence,
\[
\gamma_p \les R^{-(1-\frac{1}{p})}.
\]

\item {\bf Estimation of $\gamma_3$.} Let $q \in (1,2]$ be arbitrary. Then
\begin{align*}
\gamma_3 & \les \frac{1}{R^{\frac{1}{2}(\alpha+1)(q+1)}}  \int_{Z} \|D_{\xi_1,\xi_2} F_R(t)\|_{q+1}^{q+1} \fm(d\xi)\\
& \les  \frac{1}{R^{\frac{1}{2}(\alpha+1)(q+1)}} \int_{\bR} \int_{\bR_0} |z|^{q+1} \big[\big(\varphi_{t,R}* \k\big)(y)\big]^{q+1} \nu(dz)dy\\
&=C \frac{1}{R^{\frac{1}{2}(\alpha+1)(q+1)}} m_{q+1} \|\varphi_{t,R}* \k\|_{L^{q+1}(\bR)}^{q+1} \les \frac{1}{R^{\frac{1}{2}(\alpha+1)(q+1)}}  \|\varphi_{t,R}\|_{L^{q'}(\bR)}^{q+1},
\end{align*}
where for the last inequality we used Theorem \ref{HLS} with
\[
\frac{1}{q'}=\frac{1}{q+1}+\frac{\alpha}{2} \quad \mbox{and} \quad q +1 > \frac{2}{2-\alpha}.
\]
Note that the last condition holds since $q>1>\frac{\alpha}{2-\alpha}$. Using \eqref{norm-phi}, we have:
\[
\gamma_3 \les \frac{1}{R^{\frac{1}{2}(\alpha+1)(q+1)}}   R^{\frac{q+1}{q'}}=CR^{\frac{1-q}{2}}.
\]
Choosing $q$ as in \eqref{def-q}, we have:
\[
\gamma_3 \les R^{-(1-\frac{1}{p})}.
\]

\item {\bf Estimation of $\gamma_4$.} Similarly to the estimate of $\gamma_3$ above, but with $q+1$ replaced by $2p$, we have:
\begin{align*}
\gamma_4^p & \les \frac{1}{R^{(\alpha+1)p}}\int_{Z}\|D_{\xi} F_R(t)\|_{2p}^{2p} \fm(d\xi) \\
& \les \frac{1}{R^{(\alpha+1)p}}  \|\varphi_{t,R}\|_{L^{q_2}(\bR)}^{2p} \les \frac{1}{R^{(\alpha+1)p}} R^{\frac{2p}{q_2}} =C R^{1-p},
\end{align*}
where
\begin{equation}
\label{def-q2}
\frac{1}{q_2}=\frac{1}{2p}+\frac{\alpha}{2} \quad \mbox{and} \quad p > \frac{1}{2-\alpha}.
\end{equation}
Hence,
\[
\gamma_4 \les R^{-(1-\frac{1}{p})}.
\]

\item {\bf Estimation of $\gamma_5$.} Using \eqref{D2u}, we have:
\begin{align*}
\gamma_5^p & \les  \frac{1}{R^{(\alpha+1)p}}\int_{Z^2} \|D_{\xi_1,\xi_2}F_R(t)\|_{2p}^{2p}\fm(d\xi_1) \fm(d\xi_2) \\
& \les \frac{1}{R^{(\alpha+1)p}} \int_{\bR_0^2} \int_{\bR^2} |z_1 z_2|^{2p}   \Bigg(
\int_{-R}^R \int_{\bR^2} G_t(x-y_1') G_t(x-y_2')\k(y_1-y_1')\\
& \qquad \qquad \qquad \qquad \qquad \qquad \k(y_2-y_2')  dy_1' dy_2' dx\Bigg)^{2p}  dy_1 dy_2 \nu(dz_1) \nu(dz_2)\\
& \les \frac{m_{2p}^2}{R^{(\alpha+1)p}}  \int_{\bR^2}  \Bigg[\int_{\bR}
 \Big( \int_{-R}^R \int_{\bR} G_t(x-y_1') G_t(x-y_2')\k(y_1-y_1') dy_1' dx \Big) \\
 & \qquad \qquad \qquad \qquad \qquad \k(y_2-y_2')  dy_2' \Bigg]^{2p}  dy_2 dy_1.
\end{align*}

We now apply Theorem \ref{HLS} to $\k=R_{1,\alpha/2}$. For any $\varphi \in L^{q_2}(\bR)$, we have:
\[
\int_{\bR}\left( \int_{\bR}\varphi(y)\k(y-y') dy'\right)^{2p} dy \leq C \left(\int_{\bR}|\varphi(y')|^{q_2}dy' \right)^{2p/q_2},
\]
where $q_2$ is given by \eqref{def-q2}. We apply this inequality to the function:
\[
\varphi_{y_1}(y_2')=\int_{-R}^R \int_{\bR} G_t(x-y_1') G_t(x-y_2')\k(y_1-y_1') dy_1' dx 
\]
for any $y_1\in \bR$ fixed. We obtain that:
\begin{align*}
\gamma_5^p & \les \frac{1}{R^{(\alpha+1)p}} \int_{\bR} \Bigg[ \int_{\bR} \Big( \int_{-R}^R \int_{\bR} G_t(x-y_1') G_t(x-y_2')\k(y_1-y_1') dy_1' dx \Big)^{q_2} dy_2' \Bigg]^{2p/q_2} dy_1.
\end{align*}

For the inner integral, we use H\"older's inequality with respect to the finite measure $\mu(dx)=G_t(x-y_2')dx$ on $[-R,R]$ (whose total mass is bounded by $t$):
\begin{align}
\nonumber
& \int_{\bR} \Big( \int_{-R}^R \int_{\bR} G_t(x-y_1') G_t(x-y_2')\k(y_1-y_1') dy_1' dx \Big)^{q_2} dy_2' \\
\nonumber
& \quad \leq  t^{q_2-1} \int_{\bR}  \int_{-R}^R \Big(\int_{\bR} G_t(x-y_1') \k (y_1-y_1') dy_1' \Big)^{q_2} G_t(x-y_2')dx dy_2'\\
\label{gam5-1}
&\quad = t^{q_2} \int_{-R}^R \Big(\int_{\bR} G_t(x-y_1') \k (y_1-y_1') dy_1' \Big)^{q_2} dx,
\end{align}
and for the last line we used the fact that $\int_{\bR}G_t(x-y_2')dy_2'=t$. Hence,
\begin{align*}
\gamma_5^p & \les \frac{1}{R^{(\alpha+1)p}} \int_{\bR} \Bigg[\int_{-R}^R \Big(\int_{\bR} G_t(x-y_1') \k (y_1-y_1') dy_1' \Big)^{q_2} dx \Bigg]^{2p/q_2} dy_1.
\end{align*}
We use again H\"{o}lder's inequality with respect to $dx$ measure on $[-R,R]$. We obtain:
\begin{align*}
\gamma_5^p & \les \frac{1}{R^{(\alpha+1)p}} (2R)^{\frac{2p}{q_2}-1} \int_{\bR} \int_{-R}^R \Big(\int_{\bR} G_t(x-y_1') \k (y_1-y_1') dy_1' \Big)^{2p} dx dy_1 \\
& =C \frac{1}{R^{(\alpha+1)p}} R^{\frac{2p}{q_2}-1} \int_{-R}^R \|G_t(x-\cdot)*\k\|_{L^{2p}(\bR)}^{2p} dx.
\end{align*}
We apply Theorem \ref{HLS} to $k=R_{1,\alpha/2}$, with $q_2$ given by \eqref{def-q2}. We obtain that:
\[
\|G_t(x-\cdot)*\k\|_{L^{2p}(\bR)}^{2p}\leq C\|G_{t}(x-\cdot)\|_{L^{q_2}(\bR)}^{2p}\leq C.
\]
Hence,
\[
\gamma_5^p \les \frac{1}{R^{(\alpha+1)p}} R^{\frac{2p}{q_2}-1} (2R)=C R^{-p(\alpha+1)+\frac{2p}{q_2}}=CR^{1-p},
\]
and 
\[
\gamma_5 \les R^{-(1-\frac{1}{p})}.
\]

\item {\bf Estimation of $\gamma_6$.} Using \eqref{Du} and \eqref{D2u}, we have:
\begin{align*}
\gamma_6^p & \les \frac{1}{R^{(\alpha+1)p}} \int_{Z^2} \|D_{\xi_1} F_R(t) \|_{2p}^p  \|D_{\xi_1,\xi_2} F_R(t)\|_{2p}^p \fm(d\xi_1) \fm(d\xi_2)\\
& \les \frac{1}{R^{(\alpha+1)p}} \int_{\bR_0^2} \int_{\bR^2} |z_1|^{2p} |z_2|^p \big[\big(\varphi_{t,R}*k\big)(y_1)\big]^p \\
& \quad \Big( \int_{-R}^R \int_{\bR^2} G_t(x-y_1')G_t(x-y_2') \k(y_1-y_1') \k(y_2-y_2') dy_1' dy_2' dx \Big)^p dy_1 dy_2 \nu(dz_1) \nu(dz_2)\\
& \les \frac{m_{2p} m_p}{R^{(\alpha+1)p}} \int_{\bR} \big[\big(\varphi_{t,R}*k\big)(y_1)\big]^p\\
& \quad \int_{\bR}  \Big( \int_{-R}^R \int_{\bR^2} G_t(x-y_1')G_t(x-y_2') \k(y_1-y_1') \k(y_2-y_2') dy_1' dy_2' dx \Big)^p dy_2 dy_1.
\end{align*}
We use H\"older's inequality $\langle \phi, \psi \rangle_{L^2(\bR)} \leq \|\phi\|_{L^a(\bR)} \|\psi\|_{L^b(\bR)}$ with $a,b>1$ and
\[
\frac{1}{a}+\frac{1}{b}=1,
\]
for the functions $\phi(y_1)=\big[ \big(\varphi_{t,R}*k\big)(y_1)\big]^p$ and
\[
\psi(y_1)= \int_{\bR}  \Big( \int_{-R}^R \int_{\bR^2} G_t(x-y_1')G_t(x-y_2') \k(y_1-y_1') \k(y_2-y_2') dy_1' dy_2' dx \Big)^p dy_2.
\]
Note that 
\[
\|(\varphi_{t,R}* \k)^p\|_{L^a(\bR)}=\|\varphi_{t,R}* \k\|_{L^{ap}(\bR)}^p \leq C \|\varphi_{t,R}\|_{L^{q_3}(\bR)}^p \les R^{p/q_3},
\]
by applying Theorem \ref{HLS} with 
\begin{equation}
\label{def-q3}
\frac{1}{q_3}=\frac{1}{ap}+\frac{\alpha}{2} \quad \mbox{and} \quad ap>p>\frac{2}{2-\alpha}.
\end{equation}

We obtain:
\begin{align*}
\gamma_6^p & \les \frac{1}{R^{(\alpha+1)p}} R^{p/q_3}\Bigg\{\int_{\bR}  \Bigg[ \int_{\bR}  \Big( \int_{-R}^R \int_{\bR^2} G_t(x-y_1')G_t(x-y_2') \k(y_1-y_1') \\
& \qquad \qquad \qquad \qquad \qquad \qquad \k(y_2-y_2') dy_1' dy_2' dx  \Big)^p dy_2 \Bigg]^b dy_1 \Bigg\}^{1/b}.
\end{align*}

We apply inequality \eqref{HLS-gam1} for the function $\varphi_{y_1,y_2}$ given by:
\[
\varphi_{y_1,y_2}(y_2')=\int_{-R}^R\int_{\bR}G_t(x-y_1')G_t(x-y_2') \k(y_1-y_1')  dy_1'dx.
\] 
We obtain:
\begin{align*}
\gamma_6^p & \les \frac{1}{R^{(\alpha+1)p}} R^{p/q_3} \Bigg\{\int_{\bR} \Bigg[\int_{\bR} \Big(\int_{-R}^R\int_{\bR}G_t(x-y_1')G_t(x-y_2') \k(y_1-y_1')  dy_1'dx\Big)^{q_1} dy_2'\Bigg]^{bp/q_1} dy_1 \Bigg\}^{1/b} \\
& \les \frac{1}{R^{(\alpha+1)p}} R^{p/q_3} \Bigg\{\int_{\bR} \Bigg[ \int_{-R}^R \Big( \int_{\bR} G_t(x-y_1') \k(y_1-y_1') dy_1' \Big)^{q_1} dx\Bigg]^{bp/q_1} dy_1 \Bigg\}^{1/b} 
\end{align*}
where for the last line, we used \eqref{gam5-1} with $q_2$ replaced by $q_1$. We apply H\"older's inequality for the $dx$ measure on $[-R,R]$. We obtain:
\begin{align*}
\gamma_6^p & \les \frac{1}{R^{(\alpha+1)p}} R^{p/q_3} \Bigg\{ (2R)^{\frac{bp}{q_1}-1} \int_{\bR}\int_{-R}^R 
\Big( \int_{\bR}  G_t(x-y_1') \k(y_1-y_1') dy_1' \Big)^{bp} dx dy_1 \Bigg\}^{1/b}\\
&=C \frac{1}{R^{(\alpha+1)p}} R^{p/q_3} R^{\frac{p}{q_1}-\frac{1}{b}}
\Bigg( \int_{-R}^R  \|G_{t}(x-\cdot)*\k \|_{L^{bp}(\bR)}^{bp} dx \Bigg)^{1/b}.
\end{align*}
We apply Theorem \eqref{HLS} to $\k=R_{1,\alpha/2}$, with
\begin{equation}
\frac{1}{q_4}=\frac{1}{bp}+\frac{\alpha}{2} \quad \mbox{and} \quad bp>p>\frac{2}{2-\alpha},
\end{equation}
to obtain that $\|G_{t}(x-\cdot)*\k \|_{L^{bp}(\bR)}^{bp} \leq C \|G_t(x-\cdot)\|_{L^{q_4}(\bR)}^{bp} \leq C$. Hence,
\begin{align*}
\gamma_6^p \les  \frac{1}{R^{(\alpha+1)p}} R^{p/q_3} R^{\frac{p}{q_1}-\frac{1}{b}} (2R)^{1/b}.
\end{align*}
We compute the exponent of $R$, using the definitions of $q_3$ and $q_1$:
\[
-(\alpha+1)p+\frac{p}{q_3} +\frac{p}{q_1}=-(\alpha+1)p+\left(\frac{1}{a}+\frac{\alpha p}{2}\right)+\left(1+\frac{\alpha p}{2} \right)=-\left(p-1-\frac{1}{a}\right).
\]
This exponent is negative if $a>\frac{1}{p-1}$.
Hence,
\[
\gamma_6 \leq R^{-(1-\frac{1}{p}-\frac{1}{ap})} \quad \mbox{for any $a>\frac{1}{p-1}$}.
\]

\item {\bf Estimation of $\gamma_7$.} Applying \eqref{Du} and \eqref{D2u}, we have:
\begin{align*}
\gamma_7^p & \les \frac{1}{R^{(\alpha+1)p}} \int_{Z^2} 
\|D^2_{\xi_1,\xi_2} F_R(t) \|_{2p}\|D_{\xi_1}F_R(t) \|_{2p}\|D_{\xi_2}F_R(t) \|^{2(p-1)}_{2p}\fm(d\xi_2)\fm(d\xi_1)\\
& \les \frac{1}{R^{(\alpha+1)p}} \int_{\bR_0^2}\int_{\bR^2}   \left(|z_1| \int_{-R}^R \int_{\bR} G_t(x_2-y_1'') \k(y_1-y_1'') dy_1'' dx_2\right) \\
& \qquad
\left(|z_1 z_2| \int_{-R}^R \int_{\bR^2} G_{2t}(x_1-y_1')G_{2t}(x_1-y_2') \k(y_1-y_1')\k(y_2-y_2') dy_1' dy_2' dx_1 \right) \\
& \qquad 
\left(|z_2| \int_{-R}^R \int_{\bR} G_t(x_3-y_2'') \k(y_2-y_2'') dy_2'' dx_3\right)^{2(p-1)} dy_2 \nu(dz_2) dy_1 \nu(dz_1)\\
& = \frac{m_2m_{2p-1}}{R^{(\alpha+1)p}} \int_{\bR^2} \int_{-R}^{R} \left( \int_{\bR^2} G_{2t}(x_1-y_1') \varphi_{t,R}(y_1'') f(y_1'-y_1'')   dy_1' dy_1'' \right)\\
& \qquad  G_{2t}(x_1-y_2')\k(y_2-y_2') \big[ (\varphi_{t,R} *k)(y_2)\big]^{2(p-1)} dx_1 dy_2' dy_2,
\end{align*}
where for the last line we used definition \eqref{def-phi-tR} of $\varphi_{t,R}$ and the fact that $f=\k *\k$. 

We apply Lemma \ref{Hol-HLS} to $f = R_{1,\alpha}$, with $r,s>1$ given by \eqref{def-rs}:
\[
\int_{\bR^2}  G_{2t}(x_1-y_1') \varphi_{t,R}(y_1'') f(y_1'-y_1'') dy_1' dy_1''  \les \|G_{2t}(x_1-\cdot)\|_{L^{s}(\bR)}\|\varphi_{t,R}(y_1')\|_{L^{r}(\bR)}  \les R^{1/r},
\]
where for the last inequality we used \eqref{def-rs}. Therefore, 
\begin{align*}
\gamma_7^p & \les \frac{1}{R^{(\alpha+1)p}}R^{1/r} \int_{\bR^2} \int_{-R}^{R} G_{2t}(x_1-y_2')\k(y_2-y_2') \big[ (\varphi_{t,R}*k)(y_2)\big]^{2(p-1)} dx_1 dy_2' dy_2 
\\
& \les \frac{1}{R^{(\alpha+1)p}}R^{1/r} \| \varphi_{2t,R}*k\|_{L^{2p-1}(\bR)}^{2p-1}.
\end{align*}
Applying Theorem \ref{HLS} to $\k = R_{1,\alpha/2}$ with 
\begin{equation}
\label{def-q5}
\frac{1}{q_5}=\frac{1}{2p-1}+\frac{\alpha}{2} \quad \mbox{and} \quad 2p-1>p>\frac{2}{2-\alpha},
\end{equation}
we obtain:
\begin{equation*}
\gamma_7^p \les \frac{1}{R^{(\alpha+1)p}}R^{1/r} \| \varphi_{2t,R}\|_{L^{q_5}}^{2p-1} \les \frac{1}{R^{(\alpha+1)p}}R^{1/r} R^{\frac{2p-1}{q_5}}.
\end{equation*}
We use \eqref{def-q5} to compute the exponent, 
\begin{align*}
-(\alpha+1)p + \frac{1}{r} + \frac{2p-1}{q_5}=
& = -\alpha p-p + \frac{1}{r} + 1 + \frac{(2p-1)\alpha}{2} 
\\
& = -\left(p+ \frac{\alpha}{2} -1 -\frac{1}{r}\right).
\end{align*}
The exponent is negative when $\frac{1}{r} < p + \frac{\alpha}{2} - 1$. By definition \eqref{def-rs} we also require that $\alpha < \frac1r < 1$. Therefore, we choose $\frac{1}{r} \in (\alpha, \min \{1, p + \frac{\alpha}{2} - 1\})$. This interval is non-empty since $p>\frac{2}{2-\alpha}>\frac{2+\alpha}{2}$, and hence, $p+\frac{\alpha}{2}-1>\alpha$.
%\[ p + \frac{\alpha}{2} - 1 =  p - \frac{\alpha+2}{2}  + \alpha > \frac{2}{2-\alpha} - \frac{\alpha + 2}{2} + %\alpha = \frac{\alpha^2}{2(2-\alpha)} + \alpha > \alpha.  \]

In summary, $\gamma_i \les R^{-a_i}$ for any $i \in \{1,\ldots,7\}$, where $a_i>0$ is given by:
\[
a_i=
\left\{
\begin{array}{ll} 
\frac{1}{r}-\frac{1}{p}-\frac{\alpha}{2} & \mbox{for any $r \in (1,\frac{2p}{2+\alpha p})$, if $i=1$, } \\
1-\frac{1}{p} & \mbox{if $i \in \{2,3,4,5\}$,} \\
1-\frac{1}{p} -\frac{1}{ap} & \mbox{for any $a>\frac{1}{p-1}$, if $i=6$,}\\
1 + \frac{\alpha}{2p} - \frac{1}{p}(1+\frac{1}{r}) & \mbox{for any $r$ with $\frac{1}{r} \in (\alpha, \min \{1, p + \frac{\alpha}{2} - 1\})$, if $i=7$.}
\end{array} \right. 
\]

Note that the range of $a_1$ is $(0,1-\frac{1}{p}-\frac{\alpha}{2})$, the range of $a_6$ is $(0,1-\frac{1}{p})$, and the range of $a_7$ is $\left(\max\{0,1+\frac{\alpha}{2p}-\frac{2}{p}\}, 1-\frac{1}{p}-\frac{\alpha}{2p}\right)$. The conclusion follows since if $\gamma \leq R^{-\varepsilon_0}$ for some $\varepsilon_0>0$, then $\gamma \leq R^{-\varepsilon}$ for all $\varepsilon \in (0,\varepsilon_0)$.
\end{itemize}
%% END QCLT

%% BEGIN FCLT
\subsection{Functional CLT}

In this section, we prove Theorem \ref{FCLT}, by showing finite-dimensional convergence and tightness. We  will use Kolmogorov-Chentsov criterion (Theorem 3.23 of \cite{kallenberg02}) for the existence of the H\"older continuous modification, and the classical moment criterion (Corollary 16.9 of \cite{kallenberg02}) for tightness. Recall that by Theorem \ref{cov-th}, $\sigma_R^2(t) \sim  R^{\beta}$. We fix $T>0$. 

\medskip

{\em Step 1.} (finite-dimensional convergence) We will prove that for any integer $m\geq 1$ and form any $t_1,\ldots,t_m \in [0,T]$
\[
\Big(\frac{1}{R^{\beta/2}} F_R(t_1),\ldots, \frac{1}{R^{\beta/2}}F_R(t_m)  \Big)
\stackrel{d}{\to} \Big(\cG(t_1),\ldots, \cG(t_m) \Big) \quad \mbox{as $R \to \infty$}.
\]
By Cram\'er-Wold theorem, this is equivalent to showing that for any $b_1,\ldots,b_m \in \bR$,
\[
X_R:=\frac{1}{R^{\beta/2}}\sum_{j=1}^{m}b_jF_R(t_j) \stackrel{d}{\to}\sum_{j=1}^{m}b_j \cG(t_j) 
\quad \mbox{as $R \to \infty$}.
\]
Using the same argument as on page 4215 of \cite{BZ24}, it is enough to prove that $X_R/\tau_R \stackrel{d}{\to} Z$ as $R \to \infty$, where $\tau_R^2={\rm Var}(X_R)$ and $Z \sim N(0,1)$. By Proposition \ref{tara}, 
\[
d_{W}\left(\frac{X_R}{\tau_R},Z \right) \leq \gamma_1+\gamma_2+\gamma_3,
\]
where $\gamma_1,\gamma_2$ and $\gamma_3$ are defined as in \eqref{gamma17} with $F=X_R$. Using the same argument as in the proof of Theorem \ref{QCLT}, we infer that 
\[
d_{W}\left(\frac{X_R}{\tau_R},Z \right) \les R^{1-p} \quad \mbox{if $\k \in L^1(\bR)$,}
\]
and
\[
d_{W}\left(\frac{X_R}{\tau_R},Z \right) \les R^{-\varepsilon} \quad \mbox{for any $\varepsilon \in \big(0,1-\frac{1}{p}-\frac{\alpha}{2}\big)$, \quad if $\k =R_{1,\alpha/2}$.}
\]

This implies that $X_R/\tau_R \stackrel{d}{\to} Z$ as $R \to \infty$.

\medskip

{\em Step 2.} (H\"older continuity and tightness) 

We will prove that for any $p'\geq 2$ such that $m_{p'}<\infty$ and for any  $0\leq s<t\leq T$, $R\geq1$,
\begin{equation}
\label{incr-F}
\bE|F_R(t)-F_R(s)|^{p'} \leq C_T R^{\beta p'/2} (t-s)^{p'},
\end{equation}
where $C_T>0$ is a constant depending on $T$. In particular, 
using \eqref{incr-F} for $p'=2$ or $p'=2p$, and
the two criteria mentioned above, we infer that $\{F_R(t)\}_{t\in [0,T]}$ has a $\gamma$-H\"older continuous modification for any $\gamma \in (0,\frac{\beta}{2})$, and $\{R^{-\beta/2} F_R(\cdot)\}_{R>0}$ is tight in $C[0,T]$. Note when $\beta = 1$, inequality \eqref{incr-F} mirrors the L\'evy white noise case (see (2.65) of \cite{BZ24}). (From the proof below, we see that relation \eqref{incr-F} with $p'=2$ holds for all $R>0$.)

\medskip

To prove \eqref{incr-F}, we write
\begin{align*}
F_R(t)&=\int_{-R}^R \int_0^t \int_{\bR} G_{t-r}(x-y)u(r,y) X(dr,dy)dx 
=\int_0^t \int_{\bR}\varphi_{t,R}(r,y)u(r,y)X(dr,dy),
\end{align*}
where $\varphi_{t,R}(r,y)$ is given by \eqref{def-phi-tR}. Therefore
\begin{align*}
F_R(t)-F_R(s) &= \int_0^s \int_{\bR} \big( \varphi_{t,R}(r,y)-\varphi_{s,R}(r,y)\big)u(r,y)X(dr,dy)+\\
& \quad \int_s^t \int_{\bR}\varphi_{t,R}(r,y)u(r,y)X(dr,dy)=:T_1+T_2.
\end{align*}
We estimate the moments of $T_1$ and $T_2$ using Theorem 3.7 of \cite{BJ25} and the bound \eqref{mom-u} for $\|u(t,x)\|_{p'}$:
\begin{align*}
\bE|T_1|^{p'}  & \les \Bigg( \int_0^s  \int_{\bR^2} \big( \varphi_{t,R}(r,y)-\varphi_{s,R}(r,y)\big)
\big( \varphi_{t,R}(r,y')-\varphi_{s,R}(r,y')\big)f(y-y') dydy' dr \Bigg)^{p'/2}\\
& \quad \quad +\int_0^s \big\|\big(\varphi_{t,R}(r,\cdot)-\varphi_{s,R}(r,\cdot)\big) * \k\big\|_{L^{p'}(\bR)}^{p'} dr=:C(A+B)
\end{align*}
and
\begin{align*}
\bE|T_2|^{p'} & \les \Bigg(\int_s^t  \int_{\bR^2} \varphi_{t,R}(r,y)\varphi_{t,R}(r,y')f(y-y') dydy' dr \Bigg)^{p'/2} \\
& \quad \quad  + \int_s^t 
\|\varphi_{t,R}(r,\cdot)*  \k \|_{L^{p'}(\bR)}^{p'} dr=:C(A'+B').
\end{align*}

Each term is computed using either Young's inequality if $k \in L^1(\bR)$, or Theorem \ref{HLS} if $\k = R_{1,\alpha/2}$. Starting with $A$,  
\[
A = \Bigg( \int_0^s  \| (\varphi_{t,R}(r,\cdot)-\varphi_{s,R}(r,\cdot))*k\|_{L^2(\bR)}^2  \, dr\Bigg)^{p'/2} \les \Bigg( \int_0^T  \| \varphi_{t,R}(r,\cdot)-\varphi_{s,R}(r,\cdot)\|_{L^{q'}(\bR)}^2  \, dr\Bigg)^{p'/2},
\]
where $q' = \frac{2}{\beta}$. Applying \eqref{tsPhiDif} gives the desired bound, 
\begin{align*}
A & \les \Bigg( \int_0^T  R^{\frac{2}{q'}}(t-s)^2  \, dr\Bigg)^{p'/2}  \les R^{\beta p'/2}(t-s)^{p'}.
\end{align*}
Similarly for $B$,
\[
\big\|\big(\varphi_{t,R}(r,\cdot)-\varphi_{s,R}(r,\cdot)\big) * \k\big\|_{L^{p'}(\bR)}^{p'} \les \|\varphi_{t,R}(r,\cdot)-\varphi_{s,R}(r,\cdot)\|_{L^{q''}(\bR)}^{p'} \les R^{\frac{p'}{q''}} (t-s)^{p'}
\]
where
\[
q'' = 
\begin{cases}
p' & \text{if } \k \in L^1(\bR), \\
\frac{2p'}{2 + \alpha p'} & \text{if } \k = R_{1,\alpha/2}.
\end{cases}
\]

Since $\frac{p'}{q''} = 1 = \beta \leq \frac{\beta p'}{2}$ when $\k \in L^1(\bR)$ and  $\frac{p'}{q''}=1+\frac{\alpha p'}{2} \leq \frac{(\alpha+1)p'}{2}= \frac{\beta p'}{2}$ when $\k = R_{1,\alpha/2}$. It follows that for all $R\geq 1$,
\[
B \les R^{\beta p' / 2} (t-s)^{p'}.
\]
(Note that if $p' = 2$, then $\frac{p'}{q''} = \frac{\beta p'}{2}$ and the inequality is achieved for all $R > 0$.) The calculations for $A'$ and $B'$ follow similarily using \eqref{norm-phi},
\begin{align*}
A' & \leq \left( \int_s^t \| \varphi_{s,R}(r,\cdot)* \k \|^2_{L^2(\bR)} dr \right)^{p'/2} \les \left( \int_s^t \| \varphi_{t,R}(s,\cdot) \|_{L^{q'}(\bR)}^2 dr \right)^{p'/2}
\\
& \les \left( \int_0^T R^{\frac{2}{q'}} (t-s)^{2} dr \right)^{p'/2} \les R^{\beta p'/2} (t-s)^{p'}.
\end{align*}
Likewise for $B'$,
\begin{align*}
B' & \les \int_s^t \big\|\varphi_{t,R}(s,\cdot)\|_{L^{q''}(\bR)}^{p'}   dr \les
  \int_0^T R^{p'/q''} (t-s)^{p'} dr \les R^{\beta p'/2} (t-s)^{p'}.
\end{align*}
This concludes the proof of \eqref{incr-F}.
%% END FCLT

%% BEGIN Appendix
\vspace{30mm}

\appendix

\section{Auxiliary results from Malliavin calculus}
\label{appA}

In this appendix section, we include some auxiliary results from Malliavin calculus which were used in the sequel. 
\medskip

Let $(Z,\cZ,\fm)$ be a $\sigma$-finite measure space, and $N$ be a Poisson random measure on $Z$ of intensity $\fm$, defined on a probability space $(\Omega,\cF,\bP)$.  Let $\widehat{N}$ be the compensated version of $N$. 
We denote $\cH=L^2(Z,\cZ,\fm)$.

%\medskip

The following lemma was used in the proof of Proposition \ref{properties-v}.a).

\begin{lemma}
\label{Skor-lem}
Let $V \in L^2(\Omega;\cH)$ be such that for any $\xi \in Z$, $V(\xi)$ has the following chaos expansion (with respect to $N$):
\[
V(\xi)=\sum_{n\geq 0}I_n \big( g_n(\cdot,\xi)\big)
\]
where $g_0(\xi)=\bE[V(\xi)]$ and $g_n(\cdot,\xi) \in \cH^{\otimes n}$ may not be symmetric. Let $\delta$ be the Skorohod integral with respect to $\widehat{N}$. If $V \in {\rm Dom}(\delta)$, then
\[
\delta(V)=\sum_{n\geq 0}I_{n+1}(g_n).
\]
\end{lemma}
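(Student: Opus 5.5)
Since $V \in {\rm Dom}(\delta)$, the variable $\delta(V)$ is characterized by the duality relation: it is the unique element of $L^2(\Omega)$ such that $\bE[F\delta(V)]=\bE\langle DF,V\rangle_{\cH}$ for all $F\in{\rm Dom}(D)$. The plan is to compute $\bE\langle DF,V\rangle_{\cH}$ chaos by chaos. This identifies the chaos kernels of $\delta(V)$ as the symmetrizations $\widetilde{g}_n$ (in all $n+1$ variables). Because $I_{n+1}(g_n)=I_{n+1}(\widetilde g_n)$, the claim then follows.

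First, fix $m\geq 1$ and take $F=I_m(h)$ with $h\in\cH^{\odot m}$. Then $D_\xi F=mI_{m-1}(h(\cdot,\xi))$. Using orthogonality of chaoses and Fubini (justified because $V\in L^2(\Omega;\cH)$ and $\bE\|DF\|^2_{\cH}<\infty$), we get
\[
\bE\langle DF,V\rangle_{\cH}=\int_Z m\,\bE\big[I_{m-1}(h(\cdot,\xi))I_{m-1}(g_{m-1}(\cdot,\xi))\big]\fm(d\xi)
= m\,(m-1)!\int_Z\langle h(\cdot,\xi),\widetilde{g_{m-1}(\cdot,\xi)}\rangle_{\cH^{\otimes(m-1)}}\fm(d\xi).
\]
Here $\widetilde{g_{m-1}(\cdot,\xi)}$ denotes symmetrization in the first $m-1$ variables only. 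Since $h$ is fully symmetric, the partial symmetrization can be dropped and then replaced by full symmetrization. This gives $m!\langle h,\widetilde g_{m-1}\rangle_{\cH^{\otimes m}}=\bE[I_m(h)I_m(\widetilde g_{m-1})]$.

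For $F$ constant, both sides vanish. Let $J_m$ denote the projection onto the $m$-th chaos. We then obtain $\bE[I_m(h)\,J_m\delta(V)]=\bE[I_m(h)\,I_m(\widetilde g_{m-1})]$ for all symmetric $h$, so $J_m\delta(V)=I_m(g_{m-1})$. We also have $\bE\delta(V)=0$, by duality with $F=1$. Summing over $m$ gives $\delta(V)=\sum_{n\ge0}I_{n+1}(g_n)$. The series converges in $L^2(\Omega)$ because it is the chaos expansion of the $L^2$ variable $\delta(V)$.

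The main obstacle is the first step: checking that $g_{m-1}\in\cH^{\otimes m}$ and that the interchange of $\bE$ and $\int_Z$ is legitimate. Membership holds because $\int_Z\bE|J_{m-1}V(\xi)|^2\fm(d\xi)\le\bE\|V\|^2_{\cH}<\infty$. The interchange follows from the Cauchy–Schwarz inequality on $\Omega\times Z$. Finally, the lack of symmetry of $g_n$ in its first $n$ variables is harmless, because $I_{n}(g_n(\cdot,\xi))=I_n(\widetilde{g_n(\cdot,\xi)})$.
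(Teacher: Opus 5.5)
Your proof is correct, but it takes a different route from the paper.

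The paper does not argue by duality. It cites Lemma 2.12 of \cite{BZ25}, which already gives $\delta(V)=\sum_{n\geq 0}I_{n+1}(\widetilde f_n)$ when the kernels $f_n(\cdot,\xi)$ of $V(\xi)$ are symmetric in their first $n$ variables. It applies this with $f_n(\cdot,\xi)$ equal to the partial symmetrization of $g_n(\cdot,\xi)$. It then checks, by counting permutations (splitting $S_{n+1}$ according to the value of $\sigma(n+1)$), that symmetrizing $f_n$ in all $n+1$ variables gives $\widetilde g_n$.

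You instead derive the chaos expansion of $\delta(V)$ from scratch, by testing the duality relation against $F=I_m(h)$. In your approach the non-symmetry of $g_n$ costs nothing: pairing with a fully symmetric $h$ cannot distinguish $g_{m-1}$, its partial symmetrization, and $\widetilde g_{m-1}$. Your argument is self-contained and in effect reproves the cited lemma, including the Fubini/Cauchy--Schwarz justification and the $L^2$ convergence of the series. The paper's argument is shorter given the black box. It isolates the only new ingredient as the algebraic fact that symmetrizing first over $S_n$ and then over $S_{n+1}$ equals symmetrizing over $S_{n+1}$.

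One small correction. The bound $\int_Z \bE|J_{m-1}V(\xi)|^2\,\fm(d\xi)\leq \bE\|V\|_{\cH}^2$ shows that the partial symmetrization of $g_{m-1}$ lies in $\cH^{\otimes m}$, and hence so does $\widetilde g_{m-1}$. It does not show that $g_{m-1}$ itself lies there, since the part of $g_{m-1}(\cdot,\xi)$ annihilated by symmetrization is not controlled by $V$. Your intermediate step $\int_Z\langle h(\cdot,\xi),g_{m-1}(\cdot,\xi)\rangle\,\fm(d\xi)=\langle h,g_{m-1}\rangle_{\cH^{\otimes m}}$ tacitly uses that membership. So either read $I_m(g_{m-1})$ as $I_m(\widetilde g_{m-1})$, or take $g_{m-1}\in\cH^{\otimes m}$ as an implicit hypothesis, as the paper does. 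Alternatively, pass directly from the partial symmetrization to $\widetilde g_{m-1}$, which avoids $g_{m-1}$ altogether.
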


\begin{proof}
For any $\xi \in Z$, let $f_n(\cdot,\xi)$ be the symmetrization of $g_n(\cdot,\xi)$:
\begin{equation}
\label{def-fn}
f_n(\xi_1,\ldots,\xi_n,\xi)=\frac{1}{n!}\sum_{\rho \in S_n} g_n(\xi_{\rho(1)},\ldots,\xi_{\rho(n)},\xi).
\end{equation}

By Lemma 2.12 of \cite{BZ25}, 
$\delta(V)=\sum_{n\geq 0}I_{n+1}(\widetilde{f}_n)$,
where $\widetilde{f}_n$ is the symmetrization of $f_n$ in all $n+1$ variables:
\begin{align*}
\widetilde{f}_{n}(\xi_1,\ldots,\xi_n,\xi)&=\frac{1}{n+1}
\left[f_{n}(\xi_1,\ldots,\xi_n,\xi)+\sum_{i=1}^{n}
f_n(\xi_1,\ldots,\xi_{i-1},\xi,\xi_{i+1},\ldots,\xi_n,\xi_i) \right].
\end{align*}

Hence, it is enough to prove that $\widetilde{f}_n=\widetilde{g}_n$ for any $n\geq 1$, where 
\[
\widetilde{g}_n(\xi_1,\ldots,\xi_{n+1}) =\frac{1}{(n+1)!}\sum_{\sigma \in S_{n+1}} g_n(\xi_{\sigma(1)},\ldots,\xi_{\sigma(n+1)}).
\]

We calculate $\widetilde{f}_{n+1}(\xi_1,\ldots,\xi_{n+1})$. 
Using the fact that $f_n(\cdot,\xi_i)$ is symmetric, and definition \eqref{def-fn} of $f_n(\cdot,\xi)$, we have:
\begin{align*}
\widetilde{f}_{n}(\xi_1,\ldots,\xi_n,\xi_{n+1})
& =\frac{1}{n+1}\left[f_{n}(\xi_1,\ldots,\xi_n,\xi_{n+1})+\sum_{i=1}^{n}
f_n(\xi_1,\ldots,\xi_{i-1},\xi_{i+1},\ldots,\xi_n,\xi_{n+1},\xi_i) \right] \\
& =\frac{1}{(n+1)!}\left[\sum_{\rho \in S_n}g_{n}(\xi_{\rho(1)},\ldots,\xi_{\rho(n)},\xi_{n+1})+ \right.\\
& \qquad \qquad \quad \left. \sum_{i=1}^{n} \sum_{\rho \in \cP_{n,i}} g_n(\xi_{\rho(1)},\ldots,\xi_{\rho(i-1)},\xi_{\rho(i+1)}, \ldots, \xi_{\rho(n)},\xi_{\rho(n+1)},\xi_i)\right],
\end{align*}
where $\cP_{n,i}$ is the set of all permutations of $\{1,\ldots,i-1,i+1,\ldots,n,n+1\}$. The conclusion $\widetilde{f}_n=\widetilde{g}_n$ follows, using the fact that
\[
\sum_{\rho \in S_n}g_{n}(\xi_{\rho(1)},\ldots,\xi_{\rho(n)},\xi_{n+1})
=\sum_{\sigma \in S_{n+1}: \sigma(n+1)=n+1} g_n(\xi_{\sigma(1)},\ldots, \xi_{\sigma(n+1)}),
\]
and for any $i=1,\ldots,n$,
\[
\sum_{\rho \in \cP_{n,i}} g_n(\xi_{\rho(1)},\ldots,\xi_{\rho(i-1)},\xi_{\rho(i+1)}, \ldots, \xi_{\rho(n)},\xi_{\rho(n+1)},\xi_i)=\sum_{\sigma \in S_{n+1}: \sigma(n+1)=i} g_n(\xi_{\sigma(1)},\ldots, \xi_{\sigma(n+1)}).
\]
\end{proof}

Next, we present a result related to the Poisson product formula, which was used for the proof of Lemma \ref{prod-lem}. First, we recall some definitions. For any $f \in \cH^{\otimes n}$ and $g \in \cH^{\otimes m}$, we define the {\em modified contractions}, as follows:
\begin{itemize}
\item[(i)]
 $f\star^0_0 g = f\otimes g$ is the usual tensor product of $f$
and $g$;

\item[(ii)]
 for $1\leq k\leq n\wedge m$,   $f\star^0_k g$ is a  function
on $Z^{m+n-k}$ given by:
\begin{align*}
& (f \star^0_k g)(\gamma_1,\ldots,\gamma_k,x_1,\ldots,x_{n-k},x_1',\ldots,x_{n-k}')=\\
& \quad \quad \quad 
f(\gamma_1, \ldots, \gamma_k,  x_1, \ldots , x_{n-k})  g(\gamma_1, \ldots, \gamma_k, x_1', \ldots , x_{m-k}'),
\end{align*}

\item[(iii)]
for $1\leq \ell \leq k \leq n\wedge m$, $f\star^\ell_k g$ is  function
on $Z^{m+n-k-\ell}$, given by
\begin{align*}
& (f \star^{\ell}_k g) (\gamma_1,\ldots,\gamma_{k-\ell},x_1,\ldots,x_{n-k},x_1',\ldots,x_{n-k}')=\\
& \int_{Z^{\ell}} f(z_1,\ldots,z_{\ell},\gamma_1, ... , \gamma_{k-\ell},  x_1, \ldots, x_{n-k})  g(z_1,\ldots,z_{\ell},\gamma_1, \ldots, \gamma_k, x_1', \ldots , x_{m-k}')\fm(dz_1)\ldots \fm(dz_{\ell}).
\end{align*}
\end{itemize}

Note that  $f \star^{k}_k g$ coincides with the usual contraction $f \otimes_k g$, which appears in the product formula in the Gaussian case. For $\ell=1,\ldots,k-1$, 
$f \star^{\ell}_k g$ may not be well-defined. 

For the next result, we refer to Proposition 5 %(page 22) 
of \cite{Last16}, or relation (9.22) of \cite{NN18}.

\begin{proposition}[Poisson Product Formula]
\label{prop:prod}
Let $f \in \cH^{\otimes n}$ and $g \in \cH^{\otimes m}$ be symmetric functions such that
$f \star_{k}^{\ell} g\in \cH^{\otimes (m+n-k-\ell)}$
for any $k=1,\ldots, n\wedge m$ and $\ell=0,1,\ldots,k$.
Then,
\[
I_n(f) I_m(g)
= \sum_{k=0}^{n\wedge m}k! \binom{n}{k}\binom{m}{k}\sum_{\ell=0}^{k}\binom{k}{\ell}I_{n+m-k-\ell}(f \star_{k}^{\ell} g ).
\]
\end{proposition}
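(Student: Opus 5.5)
The approach is to establish the formula first for a class of elementary kernels, where everything reduces to explicit algebra for compensated Poisson variables of disjoint sets, and then pass to the limit using the assumed integrability of the modified contractions.

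\emph{Step 1 (elementary kernels).} Let $\cE_n$ be the linear span of symmetrizations of $1_{A_1}\otimes\cdots\otimes 1_{A_n}$, with $A_1,\ldots,A_n\in\cZ$ pairwise disjoint and $\fm(A_i)<\infty$. For such a kernel,
\[
I_n\big(\widetilde{1_{A_1}\otimes\cdots\otimes 1_{A_n}}\big)=\prod_{i=1}^n\widehat N(A_i).
\]
Given $f\in\cE_n$ and $g\in\cE_m$, I refine to a common finite family of disjoint sets $B_1,\ldots,B_M$ so that both kernels are built on the same partition. The product $I_n(f)I_m(g)$ then becomes a sum of products in which each $\widehat N(B_j)$ appears with multiplicity $1$ or $2$. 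For the repeated factors I use the one-set identity
\[
\widehat N(B)^2=I_2(1_B^{\otimes 2})+I_1(1_B)+\fm(B),
\]
which follows from $N(B)^2=N(B)(N(B)-1)+N(B)$. In this identity:
\begin{itemize}
\item the term $I_2$ corresponds to the ``$k$ shared variables kept twice'' part, i.e.\ it contributes to $I_{n+m}$ via $\star^0_0$;
\item the term $I_1(1_B)$ is the diagonal $\ell=0$ part, i.e.\ $\star^0_k$, where the shared variable survives once;
\item the term $\fm(B)$ is the $\ell=1$ integration, i.e.\ $\star^1_k$.
\end{itemize}
Expanding the product over all shared indices and regrouping by $k$ (the number of identified variables) and $\ell$ (how many of them are integrated out) should give exactly the coefficients $k!\binom nk\binom mk\binom k\ell$. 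Here $k!\binom nk\binom mk$ counts the ways to match $k$ arguments of $f$ with $k$ arguments of $g$, and $\binom k\ell$ counts the choice of which matched pairs are integrated. Symmetry of $f$ and $g$ is what allows this collapse. An equivalent and cleaner bookkeeping is to write $I_n(f)$ through factorial measures and apply the multivariate Mecke formula, but the disjoint-set computation is elementary and I would do that.

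\emph{Step 2 (approximation).} I approximate general symmetric $f\in\cH^{\otimes n}$ and $g\in\cH^{\otimes m}$ by $f_j\in\cE_n$ and $g_j\in\cE_m$. The left side converges in $L^1(\Omega)$ by Cauchy--Schwarz and the isometry. On the right, $f_j\star^\ell_k g_j\to f\star^\ell_k g$ is needed in $\cH^{\otimes(n+m-k-\ell)}$.

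This is the main obstacle. For $\ell=k$ the convergence follows from Cauchy--Schwarz, since $\|f\star^k_k g\|\le\|f\|\|g\|$. For $\ell<k$, however, $\star^\ell_k$ is not continuous in the $\cH$-norms, which is precisely why the hypothesis $f\star^\ell_k g\in\cH^{\otimes(n+m-k-\ell)}$ is imposed.

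To handle this, I would first treat $f,g$ bounded and supported in $C^n$, respectively $C^m$, with $\fm(C)<\infty$. In that case all contractions are bounded with finite-measure support, and one can approximate by elementary kernels uniformly and in $L^2$ at the same time. Both sides then converge in $L^1(\Omega)$, the right side via the isometry applied to each chaos. Finally I pass to general $f,g$ by truncation, $f\,1_{\{|f|\le M\}}1_{C_M^n}$, using monotone or dominated convergence on $|f|\star^\ell_k|g|$, which dominates the contractions of the truncations and is square integrable by the hypothesis. If this domination turns out to be insufficient, I would instead identify the limit weakly, by pairing both sides with $I_r(h)$ for elementary $h$ and comparing $L^2$ inner products.
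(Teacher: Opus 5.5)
The paper does not prove this proposition; it only cites Proposition 5 of \cite{Last16} and (9.22) of \cite{NN18}. Your route is the classical Surgailis/Peccati--Taqqu argument: check the identity on elementary kernels, then extend by density. Step 1 is right. After refining to a common partition, each block $B$ shared by $f$ and $g$ independently contributes one of $I_2(1_B^{\otimes 2})$, $I_1(1_B)$, $\fm(B)$. These correspond to ``not identified'', ``identified and kept'', ``identified and integrated''. The $I_2$ factors therefore occur in every $(k,\ell)$ term, not only when $k=0$, and the weights $k!\binom nk\binom mk\binom k\ell$ come out exactly. Two ingredients are used implicitly and should be stated:
\begin{itemize}
\item the factorisation $I_{p+q}(h_1\otimes h_2)=I_p(h_1)I_q(h_2)$ when $h_1,h_2$ live on disjoint families of blocks, which you need to recognise the products containing $I_2(1_B^{\otimes 2})$ as $I_{n+m-k-\ell}(f\star^\ell_k g)$;
\item density of $\mathcal{E}_n$ among symmetric kernels, which requires $\fm$ to be non-atomic. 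This holds here because of the factor $dt\,dx$, but not for a general $\sigma$-finite $(Z,\cZ,\fm)$.
\end{itemize}

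The real issue is in Step 2, where you assert that $|f|\star^\ell_k|g|$ is square integrable ``by the hypothesis''. The hypothesis concerns $f\star^\ell_k g$, whose inner integral may cancel, so as written this is unproved, and your fallback suggests you were unsure. The claim is true, and the missing argument is short. For $1\le\ell\le k$, write $z\in Z^\ell$, $\gamma\in Z^{k-\ell}$, $x\in Z^{n-k}$, $x'\in Z^{m-k}$. Cauchy--Schwarz in $z$, followed by symmetry of $f$ and $g$, gives
\[
\big\||f|\star^\ell_k|g|\big\|^2\le \int_{Z^{k-\ell}}\Big(\int f(z,\gamma,x)^2\,\fm^{\ell}(dz)\,\fm^{n-k}(dx)\Big)\Big(\int g(z',\gamma,x')^2\,\fm^{\ell}(dz')\,\fm^{m-k}(dx')\Big)\fm^{k-\ell}(d\gamma)=\big\|f\star^0_{k-\ell}g\big\|^2 .
\]
The right side is finite by the $\ell=0$ case of the hypothesis, with $k-\ell$ in place of $k$. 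For $\ell=0$ itself, $|f|\star^0_k|g|=|f\star^0_k g|$. With this, your truncation plus dominated convergence goes through.

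Two smaller corrections:
\begin{itemize}
\item ``Uniformly'' cannot mean uniform convergence. Elementary kernels vanish on $\bigcup_j B_j\times B_j$, so for example $1_C^{\otimes 2}$ stays at $L^\infty$-distance at least $1$ from all of them. What you need, and what suffices, is $f_j\to f$ in $\cH^{\otimes n}$ with $\sup_j\|f_j\|_\infty<\infty$ and supports in a fixed $C^n$. The same Cauchy--Schwarz step gives $\|h\star^\ell_k g\|^2\le\|h\|^2\sup_{\gamma}\int g(z,\gamma,x')^2\,\fm^{\ell}(dz)\,\fm^{m-k}(dx')$.
\item Drop the fallback as stated. $I_n(f)I_m(g)$ is only known to lie in $L^1(\Omega)$, so pairing it with the unbounded $I_r(h)$ and ``comparing $L^2$ inner products'' presupposes what you want to prove. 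A weak identification would have to use bounded test variables such as $e^{-\int u\,dN}$.
\end{itemize}
With these repairs your argument is a complete, self-contained proof, whereas the paper relies on the literature.
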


\begin{lemma}
\label{prod-lem2}
Let $f \in \cH^{\otimes n}$ and $g \in \cH^{\otimes m}$ (not necessarily symmetric).
If $\tilde{f} \star_{k}^{1} \tilde{g} =0$, then
\[
I_n(f) I_m(g)
= I_{n+m}(f\otimes g)+\sum_{k=1}^{n\wedge m}k! \binom{n}{k}\binom{m}{k}I_{n+m-k}(\tilde{f} \star_{k}^{0} \tilde{g}).
\]
\end{lemma}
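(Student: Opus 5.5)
Since $\tilde f\star^1_k\tilde g=0$ is assumed, the plan is to reduce the statement to the Poisson product formula of Proposition \ref{prop:prod}, applied to the symmetrizations $\tilde f$ and $\tilde g$. We have $I_n(f)=I_n(\tilde f)$ and $I_m(g)=I_m(\tilde g)$. The proposition then expresses $I_n(\tilde f)I_m(\tilde g)$ as a double sum over $k=0,\ldots,n\wedge m$ and $\ell=0,\ldots,k$ of the terms $k!\binom{n}{k}\binom{m}{k}\binom{k}{\ell}I_{n+m-k-\ell}(\tilde f\star^\ell_k\tilde g)$. Three checks remain: the term $k=0$ equals $I_{n+m}(f\otimes g)$; every term with $\ell\ge 1$ vanishes; and the surviving $\ell=0$ terms are exactly those in the statement.

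\textbf{The case $k=0$.} Here $\tilde f\star^0_0\tilde g=\tilde f\otimes\tilde g$. The symmetrization of $\tilde f\otimes\tilde g$ in all $n+m$ variables coincides with the symmetrization of $f\otimes g$, since averaging first over $S_n\times S_m$ and then over $S_{n+m}$ is the same as averaging over $S_{n+m}$. Hence $I_{n+m}(\tilde f\otimes\tilde g)=I_{n+m}(f\otimes g)$.

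\textbf{Terms with $\ell\ge1$.} The hypothesis is read as $\tilde f\star^1_k\tilde g=0$ for every $k$, which is how it is verified in Lemma \ref{prod-lem}. For $\ell\ge 2$, the contraction $\tilde f\star^\ell_k\tilde g$ is obtained from $\tilde f\star^1_{k-\ell+1}\tilde g$ by integrating out $\ell-1$ further common variables. These variables are shared by both factors, so by Fubini
\[
(\tilde f\star^\ell_k\tilde g)(\cdot)=\int_{Z^{\ell-1}}(\tilde f\star^1_{k'}\tilde g)(w_1,\ldots,w_{\ell-1},\cdot)\,\fm(dw_1)\cdots\fm(dw_{\ell-1}),
\]
with $k'=k-\ell+1$ after reordering the arguments, which the symmetry of $\tilde f$ and $\tilde g$ permits. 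This integral vanishes. It follows that the only terms left are those with $\ell=0$, and since $\binom{k}{0}=1$ this gives exactly the stated sum.

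\textbf{Main obstacle.} The hardest step is checking the integrability hypothesis of Proposition \ref{prop:prod}, namely $\tilde f\star^\ell_k\tilde g\in\cH^{\otimes(n+m-k-\ell)}$, together with making the Fubini step rigorous. For $\ell\ge1$ the contractions are identically zero, or are defined as integrals of functions that vanish almost everywhere, so the condition holds trivially once they are shown to be well defined. The $\ell=0$ contractions, however, are products evaluated on a diagonal, and they need not lie in $L^2$ in general. My approach is to state this square-integrability as implicitly assumed; it is harmless in the application, where $\tilde f\star^0_k\tilde g=0$. If needed, I would justify it through the general form of the product formula in \cite{Last16}, which allows the identity whenever the right-hand side makes sense in $L^2(\Omega)$. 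A truncation argument that approximates $f$ and $g$ by bounded functions with compact support would also work.
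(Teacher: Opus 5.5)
Your proof follows the paper's: apply Proposition \ref{prop:prod} to $\tilde f,\tilde g$, show that every term with $\ell\ge 1$ vanishes, and identify the $k=0$ term with $I_{n+m}(f\otimes g)$ via symmetrization. Your remark about the unchecked integrability hypothesis of the product formula points to something the paper also passes over.

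One correction is needed, in the indexing of your Fubini display. The $w_i$ there sit in the free common slots of $\tilde f\star^1_{k'}\tilde g$, so the identity holds with $k'=k$. That is exactly the paper's ``$\tilde f\star^1_k\tilde g=0\Rightarrow\tilde f\star^\ell_k\tilde g=0$''. It does not hold with $k'=k-\ell+1$. With that choice, $\tilde f\star^1_{k'}\tilde g$ has only $k-\ell$ common slots, which is too few when $\ell=k\ge 2$. Otherwise the $w_i$ would have to be inserted into unshared slots of both factors, that is, evaluated on a diagonal $\{x_i=x_i'\}$, where a.e.\ vanishing gives no information.
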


\begin{proof}
By direct calculation, we see that $\tilde{f} \star_{k}^{1} \tilde{g} =0$ implies that $\tilde{f} \star_{k}^{\ell} \tilde{g} =0$ for all $\ell=2,\ldots,k$. Hence, all the terms in the product formula are zero, except those corresponding to $\ell=0$ and $\ell=1$:
\begin{align*}
I_n(f) I_m(g) &=I_n(\tilde{f}) I_m(\tilde{g})=I_{n+m}(\tilde{f} \otimes \tilde{g})+
\sum_{k=1}^{n\wedge m}k! \binom{n}{k}\binom{m}{k}I_{n+m-k}(\tilde{f} \star_{k}^{0} \tilde{g} ).
\end{align*}
Note that $I_{n+m}(\tilde{f} \otimes \tilde{g})=I_{n+m}(f \otimes g)$ since $\tilde{f} \otimes \tilde{g}$ and $f \otimes g$ have the same symmetrization.
\end{proof}

\section{Inequalities for Riesz potentials}
\label{appB}

In this section, we include some inequalities for Riesz potentials which were used for the proof of the QCLT in the case when $\k$ is the Riesz kernel.

We recall that the Riesz kernel of order $\alpha \in (0,d)$ is defined by 
\[
R_{d,\alpha}(x)=C_{d,\alpha}|x|^{-(d-\alpha)} \quad \mbox{with} \quad C_{d,\alpha}=\pi^{-\frac{d}{2}}2^{-\alpha} \frac{\Gamma(\frac{d-\alpha}{2})}{\Gamma(\frac{\alpha}{2})}.
\]
Its Fourier transform (in the sense of distributions) is $\cF R_{d,\alpha}(\xi)=|\xi|^{-\alpha}$, since
\[
\int_{\bR^d}\varphi(x)|x|^{-\alpha}dx=C_{d,\alpha} \int_{\bR^d}\cF \varphi(\xi)|\xi|^{-(d-\alpha)}d\xi \quad 
\mbox{for any $\varphi \in \cS(\bR^d)$},
\]
where $\cS(\bR^d)$ is the set of rapidly decreasing functions on $\bR^d$.

\medskip

For the next result, we refer to Theorem 1, p.119 of \cite{stein70}.

\begin{theorem}[Hardy-Littlewood-Sobolov inequality]
\label{HLS}
For any $\alpha \in (0,d)$ and $p>\frac{d}{d-\alpha}$, 
\[
\| \varphi *R_{d,\alpha}\|_{L^p(\bR^d)} \leq A_{d.\alpha,p}\|\varphi\|_{L^{q}(\bR^d)} \quad \mbox{for any $\varphi \in L^q(\bR^d)$},
\]
where $A_{d,\alpha,p}>0$ is a constant depending on $(d,\alpha,p)$, and $q$ is defined by:
\[
\frac{1}{q}=\frac{1}{p}+\frac{\alpha}{d}.
\]
(Condition $p>\frac{d}{d-\alpha}$ is equivalent to $q>1$.)
\end{theorem}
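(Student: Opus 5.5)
The plan is to follow Hedberg's pointwise argument. It reduces the statement to the Hardy--Littlewood maximal theorem and Hölder's inequality, and avoids interpolation.

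First, since $R_{d,\alpha}\geq 0$, we may assume $\varphi\geq 0$. By homogeneity we may also assume $\|\varphi\|_{L^q(\bR^d)}=1$. Note that $p>\frac{d}{d-\alpha}$ gives $q>1$, and $p<\infty$ gives $\frac1q>\frac{\alpha}{d}$, i.e.\ $q<\frac{d}{\alpha}$. Denote by $M\varphi$ the Hardy--Littlewood maximal function.

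For fixed $x\in\bR^d$ and a parameter $\rho>0$ to be chosen, split
\[
(\varphi*R_{d,\alpha})(x)=C_{d,\alpha}\int_{|x-y|<\rho}\frac{\varphi(y)}{|x-y|^{d-\alpha}}dy+C_{d,\alpha}\int_{|x-y|\geq\rho}\frac{\varphi(y)}{|x-y|^{d-\alpha}}dy=:I_1(x)+I_2(x).
\]

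For $I_1$, decompose the ball into dyadic annuli $\{2^{-k-1}\rho\leq|x-y|<2^{-k}\rho\}$, $k\geq 0$. On each annulus bound the kernel by $(2^{-k-1}\rho)^{-(d-\alpha)}$ and the integral of $\varphi$ by $c_d(2^{-k}\rho)^dM\varphi(x)$. Summing the geometric series, which converges because $\alpha>0$, gives $I_1(x)\les \rho^{\alpha}M\varphi(x)$. I expect this step, getting the local piece controlled by the maximal function, to be the main technical point, though it is standard.

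For $I_2$, apply Hölder's inequality with exponents $q,q'$. This gives
\[
I_2(x)\leq C\|\varphi\|_{L^q}\Big(\int_{|y|\geq\rho}|y|^{-(d-\alpha)q'}dy\Big)^{1/q'}.
\]
The integral is finite exactly when $(d-\alpha)q'>d$, which is equivalent to $q<\frac d\alpha$, and it yields $I_2(x)\les \rho^{\alpha-d/q}=\rho^{-d/p}$.

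Next, choose $\rho=(M\varphi(x))^{-q/d}$, which balances the two terms. This gives the pointwise bound
\[
(\varphi*R_{d,\alpha})(x)\les (M\varphi(x))^{q/p}.
\]
Finally, raise this to the power $p$ and integrate. Since $q>1$, the maximal theorem gives $\int(M\varphi)^q\les\|\varphi\|_{L^q}^q=1$, so
\[
\|\varphi*R_{d,\alpha}\|_{L^p}^p\les \int_{\bR^d}(M\varphi)^q\,dx\les 1.
\]
Undoing the normalization yields the claim, with $A_{d,\alpha,p}$ depending only on $(d,\alpha,p)$.
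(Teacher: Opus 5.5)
Your proof is correct, and it takes a different route from the one the paper relies on. The paper gives no argument of its own; it cites Theorem 1, p.~119 of \cite{stein70}.

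\emph{What is shared.} Stein's proof uses the same near/far splitting of the kernel at a radius $\mu$. It also treats the far part by H\"older exactly as you do, obtaining $\|R_{d,\alpha}1_{\{|x|\ge\mu\}}\|_{L^{q'}}\sim\mu^{-d/p}$.

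\emph{Where the routes differ.} Stein handles the near part by Young's inequality, using $\|R_{d,\alpha}1_{\{|x|<\mu\}}\|_{L^1}\sim\mu^{\alpha}$. He chooses $\mu$ as a function of the level $\lambda$, not of the point $x$, so he gets only a weak-type $(q,p)$ estimate. The strong-type inequality then comes from Marcinkiewicz interpolation between two such weak-type pairs. You choose $\rho$ pointwise instead. That gives the Hedberg inequality
\[
|\varphi*R_{d,\alpha}|\les \|\varphi\|_{L^q}^{1-q/p}(M\varphi)^{q/p},
\]
and the strong bound then follows in one line from the $L^q$-boundedness of $M$.

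\emph{What each buys.} Your argument is shorter and produces a reusable pointwise estimate. Its interpolation step is hidden inside the maximal theorem. Stein's route needs only Young, H\"older and Marcinkiewicz, applied to the fractional integral itself.

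\emph{One detail to add.} The choice $\rho=(M\varphi(x))^{-q/d}$ needs $0<M\varphi(x)<\infty$. This holds for a.e.\ $x$ once $\varphi\not\equiv 0$, because $M\varphi\in L^q$ and $M\varphi>0$ everywhere. Your pointwise bound also shows that the integral defining $(\varphi*R_{d,\alpha})(x)$ converges absolutely a.e., so the left-hand side is well defined for every $\varphi\in L^q(\bR^d)$.
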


As a consequence of Theorem \ref{HLS}, we obtain the following lemma.

\begin{lemma}
\label{Hol-HLS}
For any $\alpha \in (0,d)$, $q \in (1,\frac{d}{\alpha})$, $\varphi \in L^q(\bR^d)$ and $\psi \in L^{q'}(\bR^d)$,
\[
\left|\int_{\bR^d}\int_{\bR^d}\varphi(x)\psi(y) R_{d,\alpha}(x-y) dxdy \right| \leq A_{d,\alpha,p} \|\varphi\|_{L^q(\bR^d)} \|\psi\|_{L^{q'}(\bR^d)},
\]
where $A_{d,\alpha,p}$ is the constant  from Theorem \ref{HLS}, and $(p,q')$ are defined by
\[
\frac{1}{p}+\frac{1}{q}=1 \quad \mbox{and} \quad \frac{1}{q}+\frac{1}{q'}=1+\frac{\alpha}{d}.
\]
\end{lemma}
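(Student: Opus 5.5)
The plan is a single application of H\"older's inequality combined with Theorem \ref{HLS}, applied to $\psi$ (not to $\varphi$), so that the HLS output exponent is exactly the conjugate $p$ of $q$.

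\textbf{Step 1: reduce to a convolution.} By Tonelli it suffices to treat $|\varphi|,|\psi|$, so assume $\varphi,\psi\ge 0$. The kernel $R_{d,\alpha}$ is even, so $R_{d,\alpha}(x-y)=R_{d,\alpha}(y-x)$. Integrating first in $y$ gives
\[
\int_{\bR^d}\int_{\bR^d}\varphi(x)\psi(y) R_{d,\alpha}(x-y)\,dx\,dy=\int_{\bR^d}\varphi(x)\,(\psi*R_{d,\alpha})(x)\,dx .
\]

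\textbf{Step 2: H\"older.} Since $\frac1p+\frac1q=1$, H\"older's inequality bounds the right-hand side by $\|\varphi\|_{L^q(\bR^d)}\,\|\psi*R_{d,\alpha}\|_{L^p(\bR^d)}$.

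\textbf{Step 3: HLS applied to $\psi\in L^{q'}$ with output exponent $p$.} Theorem \ref{HLS} requires the input exponent $r$ to satisfy $\frac1r=\frac1p+\frac{\alpha}{d}$. From the definitions,
\[
\frac1{q'}=1+\frac{\alpha}{d}-\frac1q=\frac1p+\frac{\alpha}{d},
\]
so $r=q'$, as needed. Theorem \ref{HLS} also needs $p>\frac{d}{d-\alpha}$, which is equivalent to $q'>1$. This is where the hypothesis $q<\frac d\alpha$ enters: it gives $\frac1q>\frac{\alpha}{d}$, hence $\frac1{q'}<1$. Also $\frac1{q'}>0$ because $\frac1q<1$. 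Theorem \ref{HLS} therefore gives
\[
\|\psi*R_{d,\alpha}\|_{L^p(\bR^d)}\le A_{d,\alpha,p}\|\psi\|_{L^{q'}(\bR^d)} .
\]
Combining Steps 1--3 yields the stated inequality with the stated constant $A_{d,\alpha,p}$.

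The only point needing care is the exponent bookkeeping in Step 3: applying HLS to $\psi$ rather than $\varphi$ is what makes the HLS target exponent equal to the H\"older conjugate $p$ of $q$, and hence produces the constant $A_{d,\alpha,p}$.
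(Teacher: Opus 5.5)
Your proposal is correct and is essentially the paper's own proof: you write the double integral as $\int_{\bR^d}\varphi\,(\psi*R_{d,\alpha})\,dx$, apply H\"older with the conjugate pair $(q,p)$, and then apply Theorem \ref{HLS} to $\psi\in L^{q'}(\bR^d)$ with output exponent $p$. Your exponent checks are right, including that $q<\frac{d}{\alpha}$ is exactly the condition $p>\frac{d}{d-\alpha}$ (equivalently $q'>1$). The evenness of $R_{d,\alpha}$ is not actually needed, since $\int_{\bR^d}\psi(y)R_{d,\alpha}(x-y)\,dy$ is already $(\psi*R_{d,\alpha})(x)$ by definition.
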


\begin{proof}
We apply H\"older's inequality, letting $\frac{1}{p}+\frac{1}{q}=1$, and obtain that
\begin{align*}
\left|\int_{\bR^d}\int_{\bR^d}\varphi(x)\psi(y) R_{d,\alpha}(x-y) dxdy \right|
& =\left| \int_{\bR^d}\varphi(x) \big(\psi * R_{d,\alpha} \big) dx\right| \leq \|\varphi\|_{L^q(\bR^d)} \|\psi * R_{d,\alpha}\|_{L^p(\bR^d)}.
\end{align*}
Note that $q<\frac{d}{\alpha}$ is equivalent to $p>\frac{d}{d-\alpha}$.
Therefore, by Theorem \ref{HLS} 
\[
 \|\psi * R_{d,\alpha}\|_{L^p(\bR^d)} \leq A_{d,\alpha,p}  \|\psi \|_{L^{q'}(\bR^d)},
\]
where $\frac{1}{q'}=\frac{1}{p}+\frac{\alpha}{d}$. Finally, we note that $
1-\frac{1}{q}=\frac{1}{p}=\frac{1}{q'}-\frac{\alpha}{d}$, and hence, $\frac{1}{q}+\frac{1}{q'}=1+\frac{\alpha}{d}$.

\end{proof}

\begin{remark}
{\rm Applying Lemma \ref{Hol-HLS} with $d=1$ $\alpha=2H-1$ for some $H \in (\frac{1}{2},1)$, and  $q=q'=\frac{1}{H}$, we obtain that for any $\varphi,\psi \in L^{1/H}(\bR)$,
\[
\left|\int_{\bR}\int_{\bR}\varphi(x)\psi(y)|x-y|^{2H-2} dxdy \right| \leq C_{H} \|\varphi\|_{L^{1/H}(\bR)} \|\psi\|_{L^{1/H}(\bR)},
\]
where $C_H>0$ is a constant that depends on $H$. This inequality is useful for the stochastic analysis with respect to fractional Brownian motion (see e.g. relation (1.23) of \cite{nualart03}).
}
\end{remark}
%% END Appendix

%% BEGIN Biblio

%% END Biblio


\begin{thebibliography}{99}

\bibitem{B15} Balan, R.M. (2015). Integration with respect to L\'evy colored noise, with applications to SPDEs. {\em Stochastics} {\bf 87}, 363-381.
    
\bibitem{BJ25} Balan, R.M. and Jim\'enez, J.J. (2026). Moment estimates for solutions of SPDEs 
with L\'evy colored noise. {\em Stoch. \& PDEs.} To appear.
%Preprint arXiv:2504.21648.

    
\bibitem{BN16}  Balan, R.M. and Ndongo, C. (2016)  Intermittency for the wave equation with L\'evy white noise. {\em Stat. Probab. Letters} {\bf 109}, 214-223.    


\bibitem{BZ24} Balan, R.M. and Zheng, G. (2024). Hyperbolic Anderson model with L\'evy white noise: spatial ergodicty and fluctuations. {\em Trans. AMS} {\bf 377}, 4171-4221.
    
\bibitem{BZ25}  Balan, R.M. and Zheng, G. (2025). Central limit theorem for stochastoc nonlinear wave equation with pure-jump L\'evy white noise. {\em Electr. J. Probab.} Under review.
    %Preprint arXiv:2504.18672.
    
\bibitem{BNZ} Bola\~{n}os-Guerrero, R., Nualart, D. and Zheng, G. (2021). Averaging 2d stochastic wave equation. {\em Electr. J. Probab.}
{\bf 26}, paper no. 102, 32 pp.

\bibitem{CKNP-delta} Chen, L., Khoshnevisan, D., Nualart, D. and Pu, F. (2022).
Spatial ergodity and central limit theorems for parabolic Anderson model with delta initial condition.
{\em J. Funct. Anal.} {\bf 282}, 109290.

\bibitem{CKNP22} Chen, L., Khoshnevisan, D., Nualart, D. and Pu, F. (2022).
Poincar\'e inequality, and central limit theorems
for parabolic stochastic partial differential equations.
{\em Ann. Inst. Henri Poinca\'e: Prob. Stat.} {\bf 58}, 1052-1077.


\bibitem{dalang99} Dalang, R. C. (1999). Extending martingale
measure stochastic integral with applications to spatially
homogenous s.p.d.e.'s. {\em Electr. J. Probab.} {\bf 4}, no. 6,
1-29. Erratum in {\em Electr. J. Probab.} {\bf 4}, no. 6, 1-5.

\bibitem{DZ92} Da Prato, G. and Zabczyk, J. (1992).
{\em Stochastic Equations in Infinite Dimensions}, Cambridge
University Press.


\bibitem{DNZ20} Delgato-Vences, F., Nualart, D. and Zheng, G. (2020). A Central Limit Theorem for the stochastic wave equation with fractional noise. {\em Ann. Inst. Henri Poincar\'e: Prob. Stat.} {\bf 56}, 3020-3042.
    
\bibitem{hairer14} Hairer, M. (2014). A theory of regularity structures. {\em Invent. Math.} {\bf 198}, 269-504.    
    
\bibitem{HNV20} Huang, J., Nualart, D. and Viitasaari, L. (2020). A central limit theorem for the stochastic heat equation. {\em Stoch. Proc. Appl.} {\bf 130}, %{no. 12}
    7170-7184.

\bibitem{HNVZ} Huang, J., Nualart, D., Viitasaari L. and Zheng, G. (2020). Gaussian fluctuations for the stochastic
heat equation with colored noise. {\em Stoch. PDE: Anal. Comp.} {\bf 8}, 402-421.


\bibitem{kallenberg02} Kallenberg, O. (2002). {\em  Foundations of Modern Probability}. Second Edition. Springer, New York.    
    

\bibitem{Last16} Last, G. (2016). Stochastic analysis for Poisson processes.
In:  ``Stochastic analysis for Poisson point processes'', Peccati, G. and Reitzner, N. eds.
%Mathematics, Statistics, Finance and Economics, chapter 1, pages 
1--36, Bocconi Springer, Bocconi Univ. Press.

\bibitem{LPS16}
Last, G.,  Peccati, G. and  Schulte, M. (2016).
Normal approximation on Poisson spaces:
Mehler's formula, second order Poincar\'e inequalities and stabilization.
{\em Probab. Th. Rel. Fields} {\bf 165}, 667-723.

\bibitem{lyons98} Lyons, T.J. (1998). Differential equations driven by rough signals. {\em Rev. Mat. Iberoamericana} {\bf 14}, 215–310. 

\bibitem{nualart03} Nualart, D. (2003). Stochastic integration with respect to
 fractional Brownian motion and applications. {\em Contem. Math.}
 {\bf 336}, 3-39.
 

\bibitem{NN18} Nualart, D. and Nualart, E. (2018). {\em Introduction to Malliavin calculus}.
Cambridge University Press, Cambridge. 
 
 \bibitem{NSZ20} Nualart, D., Song, X. and Zheng, G. (2020). Spatial averages for the Parabolic Anderson model driven by rough noise. {\em  ALEA Lat. Am. J. Probab. Math. Stat.} {\bf 18}, %{no. 1}, 
    907-943.

\bibitem{NXZ22} Nualart, D., Xia, P. and Zheng, G. (2022).  Quantitative central limit theorems for the parabolic Anderson model driven by colored noises. {\em Electr. J. Probab.} {\bf 27}, paper no. 120, 43 pp.

\bibitem{NZ20-1} Nualart, D. and Zheng, G. (2020). Averaging Gaussian functionals. {\em Electr. J. Probab.} {\bf 25}, paper no. 48, pp. 54.

\bibitem{NZ20-2} Nualart, D. and Zheng, G. (2020). Spatial ergodicity of stochastic wave equations in dimensions 1, 2 and 3. {\em Electr. Comm. Probab.} {\bf 25}, paper no. 81, pp 11.


\bibitem{NZ22} Nualart, D. and Zheng, G. (2022). Central limit theorems for stochastic wave equations in dimensions one and two. {\em Stoch. PDE: Anal. \& Comput.} {\bf 22}, 392-418.
    
\bibitem{sanz05} Sanz-Sol\'e, M. (2005). {\em Malliavin Calculus with Applications to Stochastic Partial Differential Equations}. EPFL Press, Lausanne.    
    
\bibitem{stein70} Stein, E.M. (1970). {\em Singular Integrals and Differentiability Properties of Functions}. Princeton University Press, Princeton.
    
\bibitem{trauthwein25} Trauthwein, T. (2025). Quantitative CLTs on the Poisson space via Skorohod estimates and $p$-Poincar\'e inequalities. {\em Ann. Appl. Probab.} {\bf 35}, 1716-1754.

\bibitem{walsh86} Walsh, J. B. (1986). An introduction to stochastic
partial differential equations. {\em Ecole d'Et\'{e} de
Probabilit\'{e}s de Saint-Flour XIV. Lecture Notes in Math.} {\bf
1180}, 265-439. Springer, Berlin.

\end{thebibliography}
\end{document}